\numberwithin{equation}{section}
\DeclareMathOperator{\shift}{E}
\DeclareMathOperator{\id}{I}
\DeclareMathOperator{\Op}{\mathfrak{D}} 
\DeclareMathOperator{\diffop}{\mathcal{E}} 
\newcommand{\ggdiffop}[3]{\diffop_{#2,#3}^{#1}}
\newcommand{\gdiffop}[1][\theorder]{\ggdiffop{#1}{a}{b}}
\DeclareMathOperator{\dmop}{\ensuremath{\Theta}}
\DeclareMathOperator{\annop}{\ensuremath{\mathfrak{S}}} 
\DeclareMathOperator{\diag}{diag}
\DeclareMathOperator{\fdiff}{\Delta}
\DeclareMathOperator{\newop}{\ensuremath{\widehat{\Op}}}
\newcommand{\nullsp}[1]{\ensuremath{\mathcal{N}_{#1}}} 
\renewcommand{\vec}[1]{\ensuremath{\mathbf{#1}}} 
\newcommand{\ff}[2]{\ensuremath{(#1)_{#2}}} 
\newcommand{\bas}{\ensuremath{u}} 
\newcommand{\fn}{\ensuremath{g}} 
\newcommand{\n}[1]{\ensuremath{\widetilde{\fn_{#1}}}} 
\newcommand{\theorder}{\ensuremath{N}} 
\newcommand{\polymaxorder}{\ensuremath{k}} 
\newcommand{\np}{\ensuremath{\mathcal{K}}} 
\newcommand{\nders}{\ensuremath{S}} 
\newcommand{\coeff}{\ensuremath{p}} 
\newcommand{\polycoeff}{\ensuremath{a}} 
\newcommand{\newcoeff}{\ensuremath{\widehat{\coeff}}}
\newcommand{\nm}{\ensuremath{M}} 
\newcommand{\nmH}{\ensuremath{\widehat \nm}}
\newcommand{\nmC}{\ensuremath{\widetilde \nm}}
\newcommand{\gseq}[4]{\ensuremath{{#4}_{#3}^{(#1,#2)}}} 
\newcommand{\mm}[3]{\gseq{#1}{#2}{#3}{v}} 
\newcommand{\mmm}{\mm{i}{j}{k}}
\newcommand{\step}{\ensuremath{\mathcal{H}}}
\newcommand{\badpart}{\ensuremath{\varepsilon}}
\newcommand{\goodpart}{\ensuremath{\mu}}
\newcommand{\gprc}[2]{\gseq{#1}{#2}{}{\Pi}}
\newcommand{\prc}{\gprc{i}{j}} 
\newcommand{\pars}{\ensuremath{\mathcal{P}}}
\newcommand{\inp}{\ensuremath{\mathcal{I}}}
\newcommand{\fwm}{\ensuremath{\mathcal{M}}} 
\newcommand{\ivm}{\ensuremath{\mathcal{N}}} 
\newcommand{\er}{\ensuremath{\epsilon}}
\newcommand{\paclass}{\ensuremath{A_{\Op}}} 
\newcommand{\rpaclass}{\paclass^*} 
\newcommand{\stepdefinition}{\ensuremath{\begin{cases}0 & x<0\\ 1 & x \geq 0\end{cases}}}
\theoremstyle{plain}
\newtheorem{thm}{Theorem}[section]
\newtheorem*{uthm}{Theorem} 
\newtheorem{lem}[thm]{Lemma}
\newtheorem{prop}[thm]{Proposition}
\newtheorem*{uprop}{Proposition} 
\theoremstyle{definition}
\newtheorem{defn}[thm]{Definition}
\newtheorem*{RP}{Operator-Based Moment Reconstruction Problem}
\newtheorem{example}{Example}[section]
\theoremstyle{remark}
\newtheorem{rem}{Remark}[section]
\begin{document}

\title{Moment inversion problem for piecewise $D$-finite functions}
\author{Dmitry Batenkov}
\address{Department of Mathematics, Weizmann Institute of Science\\Rehovot 76100, Israel}
\email{dima.batenkov@weizmann.ac.il}
\date{\today}

\begin{abstract}
We consider the problem of exact reconstruction of univariate functions with jump discontinuities at unknown positions from their moments. These functions are assumed to satisfy an \emph{a priori unknown} linear homogeneous differential equation with polynomial coefficients on each continuity interval. Therefore, they may be specified by a finite amount of information. This reconstruction problem  has practical importance in Signal Processing and other applications.

 It is somewhat of a ``folklore'' that the sequence of the moments of such ``piecewise $D$-finite''functions  satisfies a linear recurrence relation of bounded order and degree. We derive this recurrence relation explicitly. It turns out that the coefficients of the differential operator which annihilates every piece of the function, as well as  the locations of the discontinuities, appear in this recurrence in a precisely controlled manner. This leads to the formulation of a generic algorithm for reconstructing a  piecewise $D$-finite function from its moments.
We investigate the conditions for solvability of the resulting linear systems in the general case, as well as analyze a few particular examples. We provide results of numerical simulations for several types of signals, which test the sensitivity of the proposed algorithm to noise.
\end{abstract}
\subjclass[2000]{Primary: 44A60; Secondary: 34A55, 34A37}
\maketitle

\section{Introduction}\label{sec:intro}

Consider the problem of reconstructing an unknown function $\fn:[a,b] \to \reals$ from some finite number of its power moments
\begin{align}\label{eq:usual-moments}
m_k(\fn)=\int_a^b x^k \fn(x) dx \quad k=0,1,\dots,\nm
\end{align}

This formulation is a ``prototype'' for various problems in Signal Processing, Statistics, Computer Tomography and other areas (see \cite{ang2002mta,golub2000snm,sig_ack} and references there). In all practical applications, it is assumed that $\fn$ belongs to some \emph{a priori known} class, and it can be faithfully specified by a \emph{finite} number of parameters in that class. For example, smooth signals may be represented as elements of some finite-dimensional Hilbert space and then the reconstruction problem is analyzed in the classical framework of linear approximation -- see \cite{talenti1987rff,ang2002mta} for thorough expositions.

In recent years, novel algebraic techniques for moment inversion are being developed. One reason for their appearance is the unsatisfactory performance of the classical approximation methods when applied to irregular data. The famous ``Gibbs effect'' due to a jump discontinuity probably provides the best-known example of such undesired behaviour. In \cite{ettinger2008lvn} it is shown that only nonlinear methods have a chance to achieve the same order of approximation for such discontinuous signals as conventional (linear) methods do for smooth signals. The various nonlinear methods developed recently include the framework of signals with finite rate of innovation (\cite{vetterli2002ssf,dragotti2007sma,berent2006prs}), Pad\'{e}-based methods (\cite{driscoll2001pba, march1998apm, marchbarone:2000}), and other algebraic schemes (\cite{eckhoff1993,kvernadze2004ajd,beckermann2008rgp}). Methods for reconstructing planar shapes from complex moments are developed in \cite{gustafsson2000rpd,golub2000snm}. In \cite{kisunko1} an approach based on Cauchy-type integrals is presented. In \cite{sig_ack}, several of the above methods are reviewed in detail, while emphasizing the mathematical similarity between the corresponding inversion systems.

The present paper develops a general method for explicit inversion of the moment transform for functions which are piecewise solutions of linear ODEs with polynomial coefficients (precise definitions follow). In particular, the method allows to locate the discontinuities of the function by purely algebraic means. The algebra involved is in the spirit of holonomic combinatorics (\cite{zeilberger1990hsa}). We believe that the tools developed in this paper may shed a new light on the similar structure of the algebraic equations which appear in \cite{vetterli2002ssf,eckhoff1993,march1998apm,gustafsson2000rpd,sig_ack}.

\subsection{Overview of main results}

Let $\Op$ denote an arbitrary linear differential operator with polynomial coefficients
\begin{subequations}\label{eq:operator-full-def}
\begin{align}
\label{eq:operator}  \Op &= \sum_{j=0}^\theorder \coeff_j(x) \partial^j\\
 \label{eq:thecoefficients} \coeff_j(x) &= \sum_{i=0}^{\polymaxorder_j} \polycoeff_{i,j}x^i \qquad \polycoeff_{i,j} \in \reals
\end{align}
\end{subequations}
where $\partial$ is the differentiation operator with respect to $x$: \f{\partial^i f \isdef \frac{d^i}{dx^i}f=\der{f}{n}(x)} and \f{\partial^0 = \id}, the identity operator. If some function $\fn$ satisfies \f{\Op \fn \equiv 0}, we say that $\Op$ \emph{annihilates} $\fn$ and also write \f{\fn \in \nullsp{\Op} \isdef \{f: \Op f \equiv 0\}}. These functions are called ``$D$-finite'' (``differentiably-finite'', \cite{stanley1980dfp}).

The class $\paclass$ of ``piecewise $D$-finite'' functions is defined in the following way: for \f{n = 0,1,\dots,\np}, let \f{\Delta_n \isdef [\xi_{n},\xi_{n+1}]} be a partition of \f{[a,b]} such that \f{-\infty < a=\xi_0 < \xi_1 < \dotsc < \xi_{\np+1}=b < +\infty} (the case $\np=0$ corresponds to functions consisting of a single ``piece''). We say that \f{\fn \in \paclass} if there exist operators $\Op_n$ such that on each ``continuity interval'' $\Delta_n$, the $n$-th piece of $\fn$ equals to, say, $\fn_n(x)$, such that \f{\Op_n \fn_n \equiv 0}. In the most general setting, the annihilating operators $\Op_n $ may be pairwise different. In this paper we explicitly treat functions for which \f{\Op_n \equiv \Op} for all \f{0 \leq n \leq \np}. We write \f{\fn \in \rpaclass} in this particular case.

We shall always assume that the leading coefficient of \f{\Op_n} does not vanish at any point of \f{\Delta_n}.

We are interested in solving the following
\begin{RP}
Given the sequence of the moments \eqref{eq:usual-moments} of an unknown function \f{\fn \in \paclass} and the constants \f{\np,\theorder,\{\polymaxorder_j\},a,b}:
\begin{enumerate}[1)]
 \item\label{step:operator} Find the operator $\Op$ (in the general case, the operators $\Op_n$) - that is, determine the coefficients $\polycoeff_{i,j}$ (respectively $\polycoeff_{i,j,n}$) as in \eqref{eq:operator-full-def} such that $\Op_n \fn_n \equiv 0$ for all \f{0 \leq n \leq \np}.
\item\label{step:jumps} Find the ``jump points'' $\{\xi_n\}_{n=1}^\np$.
\item Let \f{\{\bas_i\}_{i=1}^\theorder} be a basis for the linear space \f{\nullsp{\Op}}. Find constants \f{\alpha_{i,n}} such that \f{\fn_n(x)=\sum_{i=1}^\theorder \alpha_{i,n} \bas_i(x)}. That is, determine each concrete solution of \f{\Op_n \fn_n = 0} on $\Delta_n$.
\end{enumerate}
\end{RP}

\begin{example}\label{ex:direct-exponential}
Let \f{\fn(x)=\alpha e^{\beta x}} be our unknown function on \f{[0,1]}. The parameters of the problem are: \f{\np=0,\Op=\partial - \beta \id, [a,b]=[0,1]} and the unknowns are: \f{\alpha,\beta}. Direct calculation of the moments yields
\begin{align}\label{eq:constsystem1}
 \left\{ \begin{aligned}
          \alpha(e^\beta-1) &= \beta m_0\\
          \alpha e^\beta &= \beta m_1+m_0
         \end{aligned}\right.
\end{align}
Denoting $\frac{m_1}{m_0} \isdef A$, \eqref{eq:constsystem1} amounts to:
\begin{align}\label{eq:constsystem1sol1}
 \left\{ \begin{aligned}
   \frac{1}{1-e^{-\beta}} - \frac{1}{\beta} &= A\\
  \frac{\beta m_0}{e^\beta-1} &= \alpha
 \end{aligned}\right.
\end{align}
This system does not have an explicit analytic solution.  It can be solved numerically (for example, using Newton's method) and a unique solution exists provided \f{A \in (0,1)} (the graph of the function \f{y=\frac{1}{1-e^{-x}} - \frac{1}{x}} is monotone in whole $\reals$ and \f{0 < y(x) < 1}). \qed
\end{example}

The above solution is unsatisfactory for several reasons. First, it is not general enough. Second, the solution is available only as an approximation and not in a closed form. However, there is an important positive feature: the minimal possible number of measurements is used. Using our method, Example \ref{ex:direct-exponential} will be solved later in a more convenient and general way -- see Examples  \ref{ex:exponential-recurrence} and \ref{ex:firstorder}.

Our method is based on the following results which we prove below (Sections \ref{sec:single-interval}, \ref{sec:piecewise-dfinite} and \ref{sec:forward-equations}). These results establish explicit relations between the known and the unknown parameters of the reconstruction problem.

\begin{uthm}[\ref{thm:main-recurrence-theorem}, \ref{thm:operator-coeffs-main-thm}]
 Let \f{\np=0} and \f{\Op \fn \equiv 0}. Then the moment sequence of $\fn$ satisfies a linear recurrence relation with coefficients \emph{linear} in $\polycoeff_{i,j}$. Consequently, the vector \f{\vec{\polycoeff}=(\polycoeff_{i,j})} satisfies a linear homogeneous system \f{H\vec{a}=0} where the entries of $H$ are certain linear combinations of the moments of $\fn$ whose coefficients depend only on \f{\theorder} and the endpoints \f{a,b}.
\end{uthm}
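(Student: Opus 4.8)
The plan is to test the identity $\Op\fn\equiv 0$ against the monomials $x^k$ and to convert the resulting integrals into moments of $\fn$ by integration by parts. Concretely, for every $k$ I would start from
\[
0=\int_a^b x^k (\Op\fn)(x)\,dx=\sum_{j=0}^{\theorder}\sum_{i=0}^{\polymaxorder_j}\polycoeff_{i,j}\int_a^b x^{k+i}\fn^{(j)}(x)\,dx,
\]
which already exhibits the desired linearity in the unknowns $\polycoeff_{i,j}$. The inner integrals are not yet moments of $\fn$, so the first step is to integrate each of them by parts $j$ times, moving the $j$ derivatives off $\fn$ and onto $x^{k+i}$. This produces a ``bulk'' term $(-1)^j\bigl[\prod_{t=0}^{j-1}(k+i-t)\bigr]\,m_{k+i-j}(\fn)$ together with the boundary contributions $\sum_{t=0}^{j-1}(-1)^t\bigl[\prod_{s=0}^{t-1}(k+i-s)\bigr]\,\bigl[x^{k+i-t}\fn^{(j-1-t)}(x)\bigr]_a^b$. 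The hypothesis that the leading coefficient of $\Op$ does not vanish on $[a,b]$ guarantees $\fn\in C^{\theorder}[a,b]$, so all these endpoint values are well defined. Collecting everything gives a relation among the moments $m_{k-\theorder},\dots,m_{k+\max_j(\polymaxorder_j-j)}$ whose coefficients are polynomials in $k$ times the $\polycoeff_{i,j}$ --- a linear recurrence of bounded order, linear in the $\polycoeff_{i,j}$ --- but spoiled by the boundary terms.

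The heart of the argument, and the step I expect to be the main obstacle, is disposing of these boundary terms \emph{without} introducing the unknown endpoint derivatives $\fn^{(s)}(a),\fn^{(s)}(b)$ as extra variables. The key observation is that, viewed as a function of the index $k$, the entire boundary contribution has the shape $b^{k}Q_b(k)+a^{k}Q_a(k)$, where $Q_a,Q_b$ are polynomials in $k$ of degree at most $\theorder-1$ (the degree coming from the falling factorials $\prod_s(k+i-s)$) whose coefficients bundle together the fixed numbers $\polycoeff_{i,j}$, $a$, $b$, and the endpoint derivatives. Such polynomial-exponential sequences are annihilated by constant-coefficient difference operators: $a^kQ_a(k)$ is killed by $(\shift-a)^{\theorder}$ and $b^kQ_b(k)$ by $(\shift-b)^{\theorder}$, where $\shift$ is the shift $k\mapsto k+1$. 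Hence applying the single operator $\annop=(\shift-a)^{\theorder}(\shift-b)^{\theorder}$, whose coefficients depend only on $a,b,\theorder$, to the recurrence regarded as an identity in $k$ annihilates the boundary part entirely. Since $\annop$ acts only on the index $k$ while the $\polycoeff_{i,j}$ are constants, the outcome is a genuinely homogeneous linear recurrence purely among the moments of $\fn$, still with coefficients linear in the $\polycoeff_{i,j}$.

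Finally, to obtain the linear system I would read the resulting identity for a range of values $k=0,1,2,\dots$. For each such $k$ the equation reads $\sum_{i,j}\polycoeff_{i,j}\,\annop\!\bigl[(-1)^j(\prod_{t=0}^{j-1}(k+i-t))\,m_{k+i-j}\bigr]=0$, and the quantity multiplying each $\polycoeff_{i,j}$ is a fixed linear combination of finitely many moments of $\fn$, its combining coefficients depending only on $\theorder$ and the endpoints $a,b$. Stacking these equations over $k$ produces the matrix $H$ with $H\vec{\polycoeff}=0$, exactly as claimed. The only genuinely nontrivial point is the boundary-term annihilation; the integration by parts and the final bookkeeping are routine, and the degenerate cases $a=0$ or $b=0$ (where the corresponding exponential sequence collapses to a finitely supported one, killed by a power of $\shift$) are absorbed into the same operator $\annop$.
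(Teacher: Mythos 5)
Your proposal is correct and follows essentially the same route as the paper: your hand-executed integration by parts is exactly the paper's use of the Lagrange identity and Green's formula (the bulk term being $\langle \fn, \Op^*(x^k)\rangle$ and the boundary terms the bilinear concomitant), your observation that the boundary contribution has the form $a^kQ_a(k)+b^kQ_b(k)$ with $\deg Q_a,Q_b\le \theorder-1$ is the paper's Lemma on the concomitant's structure, and your annihilator $(\shift-a)^{\theorder}(\shift-b)^{\theorder}$ and the stacking over $k$ to form $H$ are precisely the paper's construction. The only (cosmetic) difference is that the paper packages the bookkeeping in the formal adjoint and concomitant notation rather than writing the integrations by parts explicitly.
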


\newcommand{\enlargedop}{\ensuremath{\bigr(\prod_{i=1}^\np (x-\xi_i)^\theorder \id \bigl) \cdot \Op}}
\begin{uthm}[\ref{thm:piecewise-annihilating-operator}]
 Let \f{\np > 0} and let $\Op$ annihilate every piece of \f{\fn \in \rpaclass}. Then the operator given by \f{\newop=\enlargedop} annihilates $\fn$ \emph{as a distribution}.  Consequently, conclusions of Theorems \ref{thm:main-recurrence-theorem} and \ref{thm:operator-coeffs-main-thm} are true with $\Op$ replaced by $\newop$.
\end{uthm}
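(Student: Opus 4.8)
The plan is to compute $\Op\fn$ in the sense of distributions and to exploit the fact that, because $\Op$ annihilates each piece classically, the resulting distribution is supported only on the jump set $\{\xi_1,\dots,\xi_\np\}$. First I would record that, since the leading coefficient of $\Op$ does not vanish on any $\Delta_n$, each piece $\fn_n$ is a genuine solution of $\Op\fn_n\equiv 0$ that is smooth up to and including the endpoints of $\Delta_n$; hence all one-sided limits $\fn^{(l)}(\xi_i^\pm)$ exist and the jumps $[\fn^{(l)}]_{\xi_i}:=\fn^{(l)}(\xi_i^+)-\fn^{(l)}(\xi_i^-)$ are finite. This legitimizes viewing $\fn$ as a regular distribution on $[a,b]$ whose derivatives obey the standard jump formula.

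Next I would apply that jump formula iteratively to get, for each $j$,
\[
\partial^j\fn = \{\fn^{(j)}\} + \sum_{i=1}^\np\sum_{l=0}^{j-1}[\fn^{(j-1-l)}]_{\xi_i}\,\delta_{\xi_i}^{(l)},
\]
where $\{\cdot\}$ denotes the piecewise-classical derivative and $\delta_{\xi_i}^{(l)}$ the $l$-th derivative of the Dirac mass at $\xi_i$. Substituting into $\Op=\sum_{j=0}^\theorder \coeff_j(x)\partial^j$, the regular parts assemble into $\{\Op\fn_n\}$ on each $\Delta_n$, which vanishes by hypothesis; and since multiplication by the smooth coefficients $\coeff_j(x)$ carries $\delta_{\xi_i}^{(l)}$ to a combination of $\delta_{\xi_i}^{(l')}$ with $l'\le l$, the singular part is a finite sum
\[
\Op\fn = \sum_{i=1}^\np\sum_{l=0}^{\theorder-1} c_{i,l}\,\delta_{\xi_i}^{(l)}
\]
for some constants $c_{i,l}$. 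I expect the \emph{main obstacle} to lie in this bookkeeping: carefully establishing that the singular support is exactly $\{\xi_i\}$ and, crucially, that the orders of the delta derivatives never exceed $\theorder-1$, the highest term $\partial^\theorder$ contributing only up to $\delta^{(\theorder-1)}$.

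I would then conclude using the elementary distributional identity $(x-\xi)^m\,\delta_\xi^{(l)}=0$ whenever $m>l$ (all derivatives of $(x-\xi)^m$ up to order $l<m$ vanish at $\xi$). Multiplying $\Op\fn$ by $\prod_{i=1}^\np (x-\xi_i)^\theorder$, each summand $\delta_{\xi_{i'}}^{(l)}$ is met by the factor $(x-\xi_{i'})^\theorder$ with $\theorder>\theorder-1\ge l$, so every term dies; the remaining factors $(x-\xi_i)^\theorder$, $i\ne i'$, are smooth multipliers and irrelevant (distinctness of the $\xi_i$ makes the whole product a well-defined smooth multiplier). Hence $\newop\fn=0$ as a distribution, where $\newop=\enlargedop$. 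I would remark that this also explains the exponent $\theorder$: it is the smallest power guaranteed to annihilate the top delta derivative $\delta^{(\theorder-1)}$.

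Finally, for the ``Consequently'' clause I would observe that $\newop$ is itself a linear differential operator with polynomial coefficients, and that the distributional identity $\newop\fn=0$ is exactly what the derivations of Theorems \ref{thm:main-recurrence-theorem} and \ref{thm:operator-coeffs-main-thm} require. Those moment recurrences are obtained by pairing the annihilation relation against the monomials $x^k$ and integrating by parts, a computation insensitive to whether annihilation holds classically or distributionally, and which now leaves only boundary contributions at $a,b$ since the interior singularities have been removed. Thus the same recurrence and the same linear system hold verbatim with $\Op$ replaced by $\newop$.
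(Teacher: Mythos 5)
Your proposal is correct and is essentially the paper's own argument: compute $\Op\fn$ as a distribution, use the classical annihilation of each piece to conclude that what survives is a combination of $\delta^{(l)}(x-\xi_i)$ with $l\le\theorder-1$, and then kill this singular part with the factor $\prod_{i=1}^{\np}(x-\xi_i)^{\theorder}$, whose zero of order $\theorder$ at each $\xi_i$ exceeds every delta order (this is exactly the paper's Lemma \ref{lem:deltader-annihilation}). The only differences are organizational -- the paper writes $\fn=\n{0}+\sum_{n=1}^{\np}\n{n}(x)\step(x-\xi_n)$ and applies the Leibniz rule, allowing function coefficients $h_{ik}(x)$ on the deltas, where you invoke the iterated jump formula and reduce to constant coefficients -- and your closing ``Consequently'' paragraph gives the same (equally brief) justification the paper gives for carrying Theorems \ref{thm:main-recurrence-theorem} and \ref{thm:operator-coeffs-main-thm} over to $\newop$.
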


\begin{uprop}[\ref{prop:particular-solution}]
 Let \f{\fn \in \rpaclass} with operator \f{\Op} annihilating every piece \f{\fn_n}. Let \f{\{\bas_i\}_{i=1}^\theorder} be a basis for the linear space \f{\nullsp{\Op}} and \f{\fn_n(x)=\sum_{i=1}^\theorder \alpha_{i,n} \bas_i(x)}. Then the vector \f{\alpha} of the coefficients \f{\alpha_{i,n}} satisfies a linear system \f{C\alpha = \vec{m}} where the matrix $C$ contains the moments of \f{\bas_i} and the vector \f{\vec{m}} contains the moments of \f{\fn}.
\end{uprop}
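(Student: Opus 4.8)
The plan is to use the additivity of the moment transform over the partition $\{\Delta_n\}$ together with the linearity of the basis expansion, so that the proof reduces to a direct computation rather than to a deep structural argument. Since $\fn \in \rpaclass$ is given by $\fn_n$ on each continuity interval $\Delta_n = [\xi_n,\xi_{n+1}]$, I would first split the $k$-th moment across the pieces:
\[
m_k(\fn) = \int_a^b x^k \fn(x)\,dx = \sum_{n=0}^\np \int_{\xi_n}^{\xi_{n+1}} x^k \fn_n(x)\,dx.
\]
Substituting the assumed representation $\fn_n(x) = \sum_{i=1}^\theorder \alpha_{i,n}\bas_i(x)$ and interchanging the finite sum with the integral gives
\[
m_k(\fn) = \sum_{n=0}^\np \sum_{i=1}^\theorder \alpha_{i,n} \int_{\xi_n}^{\xi_{n+1}} x^k \bas_i(x)\,dx,
\]
which already exhibits $m_k(\fn)$ as a linear functional of the unknown coefficients $\alpha_{i,n}$.

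Next I would introduce the partial moments $m_k^{(n)}(\bas_i) \isdef \int_{\xi_n}^{\xi_{n+1}} x^k \bas_i(x)\,dx$ and record the key observation that each of these is a \emph{known} quantity: the basis $\{\bas_i\}_{i=1}^\theorder$ of $\nullsp{\Op}$ is fixed once the operator $\Op$ has been recovered, and the limits of integration $\xi_n$ are precisely the jump points recovered beforehand. With this notation the identity reads $m_k(\fn) = \sum_{n,i} m_k^{(n)}(\bas_i)\,\alpha_{i,n}$. Letting $k$ range over $0,1,\dots,\nm$, stacking the unknowns into a single vector $\alpha = (\alpha_{i,n})$ of length $\theorder(\np+1)$, placing the partial moments into a matrix $C$ with entries $C_{k,(i,n)} = m_k^{(n)}(\bas_i)$, and setting $\vec{m} = (m_0(\fn),\dots,m_\nm(\fn))^{\mathsf{T}}$, yields exactly the asserted linear system $C\alpha = \vec{m}$.

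The construction above establishes existence of the system unconditionally, so the genuine issues are computational and qualitative rather than logical. On the computational side, one must be able to produce the entries $m_k^{(n)}(\bas_i)$; when the basis solutions are elementary this is immediate, and in general I would note that these partial moments themselves obey a recurrence in $k$ of the type derived in the earlier theorems, applied on each subinterval with the appropriate boundary contributions, so the matrix $C$ can be generated without closed-form solutions. The main obstacle I anticipate is not the derivation but the separate solvability question: the proposition asserts only that the true coefficient vector $\alpha$ \emph{satisfies} $C\alpha = \vec{m}$, whereas recovering $\alpha$ uniquely requires $C$ to have full column rank (in particular $\nm + 1 \geq \theorder(\np+1)$), a condition that is analyzed elsewhere and is not part of this statement.
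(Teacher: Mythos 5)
Your proposal is correct and follows essentially the same route as the paper: the paper's proof consists precisely of applying the moment transform to $\fn_n(x)=\sum_{i=1}^\theorder \alpha_{i,n}\bas_i(x)$, summing over the pieces $n=0,\dotsc,\np$, and defining the matrix entries $c_{i,k}^n=\int_{\xi_n}^{\xi_{n+1}} x^k \bas_i(x)\,dx$, which are exactly your partial moments $m_k^{(n)}(\bas_i)$. Your closing remarks on computability of the entries and on the fact that uniqueness (full column rank of $C$) is a separate solvability question also match the paper, which defers that issue to its discussion of $\nmC$ in the solvability section.
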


Based on the above results, the proposed solution to the reconstruction problem is as follows (Section \ref{sec:thealg}):
\begin{enumerate}
 \item If \f{\np > 0}, replace \f{\Op} with \f{\newop=\enlargedop}.
\item Build the matrix $H$ and solve the system \f{H\vec{x}=0} where \f{\vec{x}} is the vector of unknown coefficients of $\Op$ or $\newop$ according to the previous step. Obtain a solution $\vec{a}$ and build the differential operator \f{\Op^*=\Op_{\vec{a}}} which annihilates $\fn$ in its entirety (as a distribution in case \f{\np>0} or in the usual sense otherwise).
\item If \f{\np>0}, recover \f{\{\xi_i\}} and the operator \f{\Op^\dagger} (which annihilates every piece of $\fn$) from \f{\Op^*}.
\item Compute the basis for \f{\nullsp{\Op^\dagger}} and solve the system \f{C \alpha = \vec{m}}.
\end{enumerate}

Conditions for the solvability of the above systems are discussed and few initial results in this direction are obtained in Section \ref{sec:solvability} below. Results of numerical simulations, presented in Section \ref{sec:stability}, suggest that the method is practically applicable to several different signal models, including piecewise constant functions (and piecewise polynomials), rational functions and piecewise sinusoids.

\subsection{Acknowledgements}
The author wishes to thank Yosef Yomdin for useful discussions and remarks. Also, the criticism of the reviewers has been very helpful.

\section{Recurrence relation for moments of piecewise $D$-finite functions}\label{sec:recurrence}
\subsection{Single continuity interval}\label{sec:single-interval}
\newcommand{\ggconcom}[3]{\ensuremath{P_{#3}(#1,#2)}}
\newcommand{\gconcomlimits}[5]{\ensuremath{\left[ \ggconcom{#1}{#2}{#3} \right]_{#4}^{#5}}}
\newcommand{\gconcom}[2]{\ensuremath{\gconcomlimits{\fn}{x^k}{\Op}{#1}{#2}}} 
\newcommand{\concom}{\gconcom{a}{b}} 

We start with the case of a $D$-finite function $\fn$ on a single continuity interval \f{[a,b]}. The main tools used in the subsequent derivation are the discrete difference calculus and the Lagrange identity for a differential operator and its adjoint. Let us briefly introduce these tools.

Let \f{s: \naturals \to \reals} be a discrete sequence. The \emph{discrete shift operator} $\shift$ is defined by \f{\shift s(n) = s(n+1)}, and the forward difference operator by \f{\fdiff \isdef \shift - \id}. We shall express recurrence relations in terms of polynomials in $\shift$ or $\fdiff$. For example, the Fibonacci sequence $F_k$ satisfies \f{\shift^2F_k=\shift F_k+F_k}, so the operator \f{P(\shift) \isdef \shift^2-\shift-\id} is an annihilating difference operator for $F_k$. Likewise, the operator $\fdiff$ annihilates every constant sequence.

\begin{lem}[\cite{elaydi2005ide}]\label{lem:polynomial-shift}
Let $p(\shift)$ be a polynomial in the shift operator $\shift$ and let $g(n)$ be any discrete function. Then
\[
p(\shift)(b^n g(n)) = b^n p(b\shift) g(n)
\]
\end{lem}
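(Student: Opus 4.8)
The plan is to reduce the claim to the case of a single power of $\shift$ and then invoke linearity. Write $p(\shift) = \sum_{m=0}^{d} c_m \shift^m$ for scalars $c_m$ and some degree $d$. Both the map $g \mapsto p(\shift)(b^n g(n))$ and the map $g \mapsto b^n p(b\shift)g(n)$ are linear in the coefficients $c_m$, so it suffices to establish the identity for each monomial $p(\shift) = \shift^m$ separately and then sum over $m$.

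For a monomial I would compute the two sides directly. On the left, by definition of the shift, $\shift^m s(n) = s(n+m)$ for any sequence $s$; applying this to $s(n) = b^n g(n)$ gives $\shift^m(b^n g(n)) = b^{n+m} g(n+m)$. On the right, the key observation is that $b$ is a scalar constant, independent of $n$, so the operator $b\shift$ satisfies $(b\shift)^m = b^m \shift^m$. Hence $b^n (b\shift)^m g(n) = b^n b^m \shift^m g(n) = b^{n+m} g(n+m)$, which matches the left-hand side; summing against $c_m$ recovers the general identity.

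The argument is essentially bookkeeping, and the only point requiring any care — the ``main obstacle,'' such as it is — is the verification that the scalar $b$ factors cleanly out of the composite operator $(b\shift)^m$, which is checked by peeling off one factor $b\shift$ at a time and commuting the constant $b$ past the shift. This relies crucially on $b$ being constant: were it replaced by a sequence depending on $n$, the scalar and the shift would no longer commute and extra terms would appear. An equivalent route avoids the monomial reduction altogether and argues by induction on $d$ via $p(\shift) = \shift\, q(\shift) + c_0$, where the inductive step collapses to the single-shift identity $\shift(b^n h(n)) = b \cdot b^n \shift h(n)$.
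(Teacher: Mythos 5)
Your proof is correct. The paper does not actually prove this lemma---it is quoted from the reference \cite{elaydi2005ide}---so there is nothing to compare against; your argument (linearity reduction to the monomial $\shift^m$, then the scalar identity $(b\shift)^m = b^m\shift^m$, which is exactly where constancy of $b$ is used) is the standard textbook verification and is complete as stated.
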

\begin{lem}[\cite{elaydi2005ide}]\label{lem:discrete-polynomial-annihilated-delta-operator}
For any polynomial $p(n)$ of degree $k$ and for $i \geq 1$:
\[\fdiff^{k+i} p(n) = 0\]
\end{lem}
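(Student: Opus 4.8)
The plan is to reduce everything to a single structural fact: the forward difference operator $\fdiff$ lowers the degree of a nonconstant polynomial by \emph{exactly} one, while annihilating constants. Granting this, the lemma follows from a short induction on the degree $k$, and the passage from $i=1$ to arbitrary $i \geq 1$ is then immediate.

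First I would establish the degree-lowering property. Writing $p(n) = \sum_{j=0}^{k} c_j n^j$ with $c_k \neq 0$, we have $\fdiff p(n) = (\shift - \id)p(n) = p(n+1) - p(n)$. Expanding each $(n+1)^j$ by the binomial theorem, the $n^j$ terms cancel against $-n^j$ for every $j$, so in particular the top term $c_k n^k$ disappears completely. The surviving leading contribution comes from the $n^{k-1}$ coefficient of $(n+1)^k$, namely $c_k \binom{k}{1} n^{k-1} = c_k\, k\, n^{k-1}$, which is nonzero precisely because $c_k \neq 0$ and $k \geq 1$. Hence $\fdiff p$ has degree exactly $k-1$. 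When $p$ is constant ($k=0$) we get $\fdiff p = p(n+1)-p(n) = 0$ outright.

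With this in hand I would prove by induction on $k$ that $\fdiff^{\,k+1} p = 0$ for every polynomial $p$ of degree $k$. The base case $k=0$ is the constant case just handled. For the inductive step, if $\deg p = k \geq 1$ then $\fdiff p$ has degree $k-1$, so the induction hypothesis gives $\fdiff^{\,k}(\fdiff p) = 0$, i.e. $\fdiff^{\,k+1} p = 0$. Finally, for any $i \geq 1$ I would write $\fdiff^{\,k+i} p = \fdiff^{\,i-1}\bigl(\fdiff^{\,k+1} p\bigr) = \fdiff^{\,i-1}(0) = 0$, which is exactly the claimed identity.

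There is no genuine obstacle here; the only point demanding a little care is verifying that the degree drops by exactly one rather than merely by at least one, since it is this \emph{exactness} that forces the induction to terminate at step $k+1$ and not merely to guarantee eventual vanishing. An even cleaner route that bypasses this bookkeeping is to expand $p$ in the falling-factorial basis $n^{\underline{j}} = n(n-1)\cdots(n-j+1)$ and use the rule $\fdiff\, n^{\underline{j}} = j\, n^{\underline{j-1}}$, the discrete analogue of $\partial x^j = j x^{j-1}$; then $\fdiff^{\,k+1}$ annihilates every basis element of degree at most $k$ simultaneously, and the result drops out by linearity.
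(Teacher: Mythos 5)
Your proof is correct, but there is nothing in the paper to compare it against: the paper states this lemma as a known fact of discrete calculus, citing the textbook \cite{elaydi2005ide}, and supplies no proof of its own. Your argument --- $\fdiff$ annihilates constants and lowers the degree of a nonconstant polynomial by exactly one, hence $\fdiff^{k+1}p=0$ by induction on $k$, and prepending $\fdiff^{i-1}$ changes nothing --- is the standard proof and is complete. Two small observations. First, the exactness of the degree drop is less essential than your closing remark suggests: if the induction hypothesis is phrased over polynomials of degree \emph{at most} $k$, then ``drops by at least one'' already gives $\fdiff^{k}(\fdiff p)=0$ and hence $\fdiff^{k+1}p=0$, with the same bound $k+1$. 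Second, your alternative falling-factorial route is arguably the more natural one in the context of this paper, since the paper introduces the falling factorial $\ff{x}{k}$ (Definition \ref{def:ff}) and works with factorial polynomials throughout; expanding $p$ in the basis $\ff{n}{j}$, $0\le j\le k$, and applying $\fdiff\, \ff{n}{j} = j\,\ff{n}{j-1}$ annihilates every basis element after at most $k+1$ applications of $\fdiff$, and the lemma follows by linearity.
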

\begin{defn}\label{def:ff}
 Let \f{x \in \reals} and \f{k \in \naturals}. The $k$th \emph{falling factorial}\footnote{ The Pochhammer symbol \ff{x}{n} is also used in the theory of special functions. There it usually represents the rising factorial \f{\ff{x}{n}=x\cdot(x+1)\cdot\dotsc \cdot(x+n-1)}. } of $x$ is
\[
 \ff{x}{k} \isdef x(x-1)\dots(x-k+1)
\]
\end{defn}

The following well-known properties of the falling factorial are immediately derived from Definition \ref{def:ff}:
\begin{prop}
Let \f{x \in \reals}, \f{k \in \naturals}. Then
\begin{enumerate}[(1)]
\item \f{\ff{x}{k}} is a polynomial in $x$ of degree $k$ (thus it is also called the \emph{factorial polynomial}).
 \item If \f{x = n \in \naturals} and \f{n \geq k}, then \f{\ff{n}{k} = \frac{n!}{(n-k)!}}.
\item If \f{x = n \in \naturals \cup \{0\}} and \f{n < k}, then \f{\ff{n}{k}=0}.
\end{enumerate}
\end{prop}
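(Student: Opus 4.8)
The plan is to read off all three assertions directly from Definition \ref{def:ff}, since each is an elementary consequence of the product expansion $\ff{x}{k} = x(x-1)\cdots(x-k+1)$. No auxiliary machinery is needed; the whole proof is a matter of inspecting this product under the relevant hypotheses.

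For part (1), I would observe that $\ff{x}{k}$ is a product of exactly $k$ factors, each of the form $x-j$ for $j=0,1,\dots,k-1$, and that every such factor is a monic polynomial in $x$ of degree one. A product of $k$ degree-one polynomials is a polynomial of degree $k$, and multiplying the leading terms shows the leading coefficient equals $1$; hence $\ff{x}{k}$ is a (monic) polynomial of degree exactly $k$, as claimed. For part (2), I would set $x=n$ with $n\geq k$ and rewrite $\ff{n}{k}=n(n-1)\cdots(n-k+1)$ by multiplying and dividing by $(n-k)!$. The numerator then telescopes into the full product $n(n-1)\cdots(n-k+1)(n-k)\cdots 1 = n!$, yielding $\ff{n}{k}=\frac{n!}{(n-k)!}$.

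For part (3), I would again set $x=n$, now with $0\leq n<k$, and note that the $k$ factors of $\ff{n}{k}$ take the consecutive integer values $n,n-1,\dots,n-k+1$. Since $n<k$ forces $n-k+1\leq 0$ while $n\geq 0$, the value $0$ lies in this range, so one of the factors vanishes and the entire product is zero. I do not anticipate any genuine obstacle in this proposition: all three parts follow immediately from the definition, the only point requiring a moment's care being the verification in part (3) that $0$ actually occurs among the factor values, which is precisely the content of the inequality $n-k+1\leq 0\leq n$.
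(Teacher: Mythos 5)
Your proof is correct and matches the paper's approach exactly: the paper offers no separate argument, stating only that these properties are ``immediately derived from Definition \ref{def:ff}'', and your three verifications (counting the $k$ monic linear factors, multiplying and dividing by $(n-k)!$, and checking that $0$ lies among the consecutive factor values $n, n-1, \dots, n-k+1$ when $0 \leq n < k$) are precisely those immediate derivations spelled out.
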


The formal adjoint of a differential operator \f{\Op=\sum_{j=0}^\theorder \coeff_j(x) \partial^j} is given by
\begin{equation}\label{eq:adjoint-definition}
 \Op^*\{\cdot\} \isdef \sum_{j=0}^\theorder (-1)^j \partial^j \{\coeff_j(x) \cdot \}
\end{equation}
The operator and its adjoint are connected by the \emph{Lagrange identity} (\cite{ince1956ode}): for every \f{u,v \in C^{\theorder}}
\begin{equation}\label{eq:lagrange}
 v \Op(u) - u \Op^*(v) = \frac{d}{dx} \ggconcom{u}{v}{\Op}
\end{equation}
where $\ggconcom{u}{v}{\Op}$ is the \emph{bilinear concomitant} - a homogeneous bilinear form which may be written explicitly as (\cite[p.211]{ince1956ode}):
\begin{align}\label{eq:concomitant-explicit}
\begin{split}
\ggconcom{u}{v}{\Op} &= u \left\lbrace \coeff_1 v - \partial(\coeff_2 v) + \cdots + (-1)^{\theorder-1} \partial^{\theorder-1} (\coeff_{\theorder} v) \right\rbrace\\
&+ u' \left\lbrace \coeff_2 v - \partial (\coeff_3 v) + \cdots + (-1)^{\theorder-2} \partial^{\theorder-2} (\coeff_{\theorder} v) \right\rbrace\\
&+ \cdots\\
&+ \der{u}{\theorder-1} \coeff_{\theorder} v
\end{split}
\end{align}

If \eqref{eq:lagrange} is integrated between $a$ and $b$, \emph{Green's formula} is obtained:
\begin{equation}\label{eq:green}
 \langle \Op u,v \rangle - \langle u,\Op^* v \rangle = \gconcomlimits{u}{v}{\Op}{a}{b}
\end{equation}
where the inner product is defined by\f{\langle u,v \rangle \isdef \int_a^b u v dx}.

Let $\Op$ and $\fn$ be arbitrary. Consider the ``differential moments'' associated with $\Op$: \[
m_k^{\Op}(\fn) \isdef m_k(\Op \fn)
\]
By \eqref{eq:green}, we have
\[
m_k(\Op \fn)=\langle \Op \fn, x^k \rangle = \langle \fn, \Op^*(x^k) \rangle + \concom
\]
Let us define the following two sequences, indexed by $k$:
\begin{align*}
\goodpart_k&=\goodpart_k(\Op,g) \isdef \langle \fn, \Op^*(x^k) \rangle\\
\badpart_k&=\badpart_k(\Op,\fn) \isdef \concom
\end{align*}
The sequence of the differential moments is therefore the sum of the two sequences:
\begin{align}\label{eq:diffmoments-defs}
m_k(\Op \fn) &= \goodpart_k(\Op,\fn) + \badpart_k(\Op,\fn)
\end{align}

Using \eqref{eq:operator-full-def} and \eqref{eq:adjoint-definition} we have:
\begin{align}\label{eq:diffmoments-linear-term}
 \begin{split}
  \goodpart_k = \langle \fn, \Op^*(x^k) \rangle &= \langle \fn, \sum_{j=0}^\theorder (-1)^j \frac{d^j}{dx^j}(\coeff_j(x) x^k) \rangle\\ &=  \sum_{i=0}^{\polymaxorder_j} \sum_{j=0}^{\theorder} \polycoeff_{i,j} (-1)^j \ff{i+k}{j} m_{i+k-j}(\fn) \isdef \dmop_{\Op}(k,\shift)m_k
 \end{split}
\end{align}
where
\begin{align}\label{eq:mmm-def}
\dmop=\dmop_{\Op}(k,\shift) &\isdef \sum_{i,j} \polycoeff_{i,j} \prc(k,
\shift) & 
\prc(k,\shift) &\isdef (-1)^j \ff{i+k}{j}  \shift^{i-j}
\end{align}

Now let us return to our main problem. Recall that $\fn$ is $D$-finite, so let \f{\Op \fn = 0}. \eqref{eq:diffmoments-defs} combined with \eqref{eq:diffmoments-linear-term} gives \f{\dmop m_k + \badpart_k = 0}. As we demonstrate below, there exists a discrete difference operator \f{\diffop=\diffop(\shift)} such that $\diffop \badpart_k \equiv 0$. Multiplying the last equation  by this $\diffop$ from the left (multiplication being composition of difference operators) gives us the desired recurrence relation: \f{\diffop \cdot \dmop m_k = 0}.

The sequence $\badpart_k$ is related to the behavior of $\fn$ at the endpoints of the interval \f{[a,b]}. The following lemma unravels its structure.

\begin{lem}\label{lem:concomitant-structure}
Let \f{\Op} be of degree $\theorder$ as in \eqref{eq:operator-full-def}. Then there exist polynomials $q_a(k)$ and $q_b(k)$ of degree at most $\theorder-1$ such that
 \begin{align}
  \badpart_k (\Op,\fn) &= b^k q_b(k) - a^k q_a(k), \quad k=0,1,\dotsc
 \end{align}
\begin{proof}
 Write \f{\Op = \sum_{j=0}^\theorder \Op_j} where \f{\Op_j=\coeff_j(x) \partial^j}. Denote \f{\badpart_{k,j} \isdef \badpart_k(\Op_j,\fn)}. By \eqref{eq:concomitant-explicit} we have \f{\badpart_k = \sum_{j=0}^\theorder \badpart_{k,j}} where
\begin{align*}
\badpart_{k,j} &= \ggconcom{\fn}{x^k}{\Op_j}= \bigg\lbrace \der{\fn}{j-1} x^k p_j - \der{\fn}{j-2} \partial (x^k p_j) + \dots + (-1)^{j-1} \fn \partial^{j-1}(x^k p_j) \bigg\rbrace \bigg|_{x=a}^b
\end{align*}
Use Leibniz rule:
\begin{align*}
\partial^i (x^k p_j(x)) &= \sum_{l=0}^{i} {i \choose l} \der{(x^k)}{l} \der{p_j}{i-l}(x) = \sum_{l=0}^{i} {i \choose l} \der{p_j}{i-l}(x) \ff{k}{l} x^{k-l} = x^k \sum_{l=0}^{i} \ff{k}{l} r_{i,j,l}(x)
\end{align*}
where \f{r_{i,j,l}(x) = x^{-l} {i \choose l} \der{p_j}{i-l}(x)} is a rational function. Now
\begin{align*}
\badpart_{k,j} &= \left. \left \lbrace x^k \sum_{i=0}^{j-1} (-1)^i \der{\fn}{j-1-i}(x) \sum_{l=0}^{i} \ff{k}{l} r_{i,j,l}(x) \right\rbrace \right|_{x=a}^b= \left. \left \lbrace x^k \sum_{i=0}^{j-1} \sum_{l=0}^{i} \ff{k}{l} s_{i,j,l}(x) \right\rbrace \right|_{x=a}^b\\ &= b^k q_{b,j}(k) - a^k q_{a,j}(k)
\end{align*}
where \f{s_{i,j,l}(x) = (-1)^i \der{\fn}{j-1-i}(x) r_{i,j,l}(x)} and \f{
q_{\alpha,j}(k) = \sum_{i=0}^{j-1} \sum_{l=0}^{i} \ff{k}{l} s_{i,j,l}(\alpha)} for \f{\alpha \in \{a,b\}}.
These are polynomials in $k$ of degree at most \f{j-1} (see Definition \ref{def:ff}). Now
\begin{align*}
 \badpart_k &= \sum_{j=0}^\theorder \badpart_{k,j} = b^k \sum_{j=0}^\theorder q_{b,j}(k) - a^k \sum_{j=0}^\theorder q_{a,j}(k)
\end{align*}
Take \f{q_a(k) \isdef \sum_{j=0}^\theorder q_{a,j}(k)} and \f{q_b(k) \isdef \sum_{j=0}^\theorder q_{b,j}(k)}. Since \f{\deg q_{a,j},q_{b,j} < j}, then we have \f{\deg q_a,q_b < \theorder} and this completes the proof.
\end{proof}
\end{lem}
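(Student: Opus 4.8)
The plan is to use the special structure of the second argument of the bilinear concomitant in $\badpart_k = \concom$, namely that $v = x^k$. The crucial fact is that $\der{(x^k)}{l} = \ff{k}{l}\, x^{k-l}$, a product of a polynomial in $k$ of degree exactly $l$ with the single power $x^{k-l}$. This is precisely what forces the product structure $b^k q_b(k) - a^k q_a(k)$: the factor $x^k$ will survive at the endpoints, while all of the $k$-dependence will be confined to falling factorials $\ff{k}{l}$ of bounded order.

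First I would reduce to a single homogeneous term. Writing $\Op = \sum_{j=0}^\theorder \Op_j$ with $\Op_j = \coeff_j(x)\partial^j$, and noting that the explicit expression \eqref{eq:concomitant-explicit} is \emph{linear} in the coefficients $\coeff_j$, the concomitant splits additively, so $\badpart_k = \sum_{j=0}^\theorder \badpart_{k,j}$ where $\badpart_{k,j} = \ggconcom{\fn}{x^k}{\Op_j}\big|_{x=a}^{b}$. Keeping only the terms carrying $\coeff_j$ in \eqref{eq:concomitant-explicit}, one finds that $\badpart_{k,j}$ is the alternating sum $\sum_{i=0}^{j-1}(-1)^i\,\der{\fn}{j-1-i}(x)\,\partial^i(\coeff_j x^k)$, evaluated between $a$ and $b$.

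Next I would expand $\partial^i(\coeff_j x^k)$ by the Leibniz rule and insert the falling-factorial identity for $\der{(x^k)}{l}$. Every summand then has the shape $x^k \cdot \ff{k}{l}\cdot (\text{a fixed function of } x \text{, independent of } k)$, with $0 \le l \le i \le j-1$. Pulling $x^k$ out of the whole expression and collecting, I get $\badpart_{k,j} = b^k q_{b,j}(k) - a^k q_{a,j}(k)$, where $q_{\alpha,j}(k) = \sum_{i=0}^{j-1}\sum_{l=0}^{i}\ff{k}{l}\,s_{i,j,l}(\alpha)$ and each $s_{i,j,l}(\alpha)$ is the constant obtained by evaluating the $x$-dependent factor (a derivative of $\fn$ times a rational coefficient built from $\coeff_j$ and its derivatives) at $x=\alpha$. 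Since $\ff{k}{l}$ has degree $l\le j-1$ in $k$, each $q_{\alpha,j}$ is a polynomial of degree at most $j-1$; summing over $j$ and setting $q_\alpha = \sum_{j=0}^\theorder q_{\alpha,j}$ yields $\badpart_k = b^k q_b(k) - a^k q_a(k)$ with $\deg q_a,\deg q_b \le \theorder - 1$.

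The computation is essentially bookkeeping, and there is no deep obstacle; the two places requiring care are the degree count and one technical wrinkle. The degree count is the heart of the matter: the only genuine idea is that in the $j$-th concomitant term the order of differentiation landing on $x^k$ never exceeds $j-1$, so the falling factorials transporting all the $k$-dependence have degree at most $\theorder-1$, which is exactly the claimed bound. The wrinkle is the case $a=0$ or $b=0$, where the intermediate factors containing $x^{-l}$ are singular; this is harmless, since $x^k x^{-l} = x^{k-l}$ is a genuine monomial and the formula should be read as a polynomial identity in $k$ (equivalently, recombine the powers before substituting the endpoint), leaving the degree bound intact.
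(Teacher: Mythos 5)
Your proposal is correct and follows essentially the same route as the paper's own proof: the same decomposition $\Op=\sum_{j=0}^\theorder \Op_j$ with the concomitant splitting additively, the same Leibniz-rule expansion using $\bigl(x^k\bigr)^{(l)}=\ff{k}{l}x^{k-l}$, and the same degree count $\deg q_{\alpha,j}\le j-1$ leading to $\deg q_a,\deg q_b\le \theorder-1$. Your closing remark is a small bonus over the paper's write-up: the paper's intermediate rational functions $r_{i,j,l}(x)=x^{-l}\binom{i}{l}\coeff_j^{(i-l)}(x)$ are indeed singular when an endpoint equals $0$, and your resolution (recombine $x^k x^{-l}=x^{k-l}$ before evaluating, the terms with integer $k<l$ being killed by $\ff{k}{l}=0$) is exactly the right way to read the formula.
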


As a side remark, we have the following simple condition for the sequence \f{\{\badpart_k\}} to be a nonzero sequence.
\begin{thm}\label{thm:vanishing-concomitant}
Assume \f{\Op \fn \equiv 0} and \f{\coeff_\theorder(x) \neq 0} on \f{[a,b]}. Then \f{\badpart_k(\Op,\fn) \equiv 0} if and only if \f{\fn \equiv 0}.
\begin{proof}
From the proof of Lemma \ref{lem:concomitant-structure} we have for \f{\alpha \in \{a,b\}}:
\begin{align}\label{eq:concomitant-part-explicit}
 q_{\alpha}(k) &= \sum_{j=0}^\theorder \sum_{i=0}^{j-1} (-1)^i \der{\fn}{j-1-i}(\alpha) \sum_{l=0}^i \ff{k}{l} \alpha^{-l} {i \choose l} \der{\coeff_j}{i-l}(\alpha)
\end{align}

\begin{itemize}
 \item In one direction, we have \f{\fn(\alpha)=\dots=\der{\fn}{\theorder-1}(\alpha)=0} for \f{\alpha=a,b}. By direct substitution we obtain \f{q_{\alpha}(k) \equiv 0}.
\item To prove the other direction, assume \f{b^k q_b(k) \equiv a^k q_a(k) = C}. Consider the following cases:
\begin{enumerate}[(1)]
 \item \f{C \neq 0} i.e. \f{a,b \neq 0} and \f{q_a(k),q_b(k) \not\equiv 0}. Then \f{\Bigl(\frac{b}{a}\Bigr)^k = \frac{q_a(k)}{q_b(k)}} for all \f{k \in \naturals}. But this is impossible because the left hand side is of exponential growth and the right-hand side is of at most polynomial growth.
 \item \f{C = 0}. Then at least one of \f{q_a(k),q_b(k)} must be identically zero. So let \f{q_a(k) \equiv 0} and \f{a \neq 0}. The coefficient of the highest order term of \f{q_a} corresponds to \f{j=\theorder,i=l=\theorder-1} and equals to
\[
 (-1)^{\theorder-1} \fn(a) a^{-(\theorder-1)} \coeff_{\theorder}(a)
\]
This is zero only if \f{\fn(a)=0}. So we can lower the limit of the second summation in \eqref{eq:concomitant-part-explicit} to \f{n-2}. Repeating this argument with \f{j=\theorder,i=l=\theorder-2} shows that \f{\fn'(a)=0}. Finally we obtain that \f{\fn(a)=\dots=\der{\fn}{\theorder-1}(a)=0} and by the uniqueness theorem for linear ODEs we conclude \f{\fn \equiv 0}. \qedhere
\end{enumerate}
\end{itemize}
\end{proof}
\end{thm}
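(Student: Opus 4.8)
The plan is to handle the two implications separately, the reverse one being immediate and the forward one carrying all the work. The reverse implication is clear: the quantity $\badpart_k = \concom$ is a homogeneous bilinear form in the jet $(\fn,\fn',\dots,\der{\fn}{\theorder-1})$ and in $x^k$, so $\fn\equiv 0$ kills every term and $\badpart_k\equiv 0$. For the forward implication I would start from Lemma \ref{lem:concomitant-structure}, which writes $\badpart_k = b^k q_b(k) - a^k q_a(k)$ with $\deg q_a,\deg q_b < \theorder$; the hypothesis $\badpart_k\equiv 0$ then becomes the single sequence identity $b^k q_b(k) = a^k q_a(k)$ for all $k\in\naturals$, and the whole task is to squeeze $\fn\equiv 0$ out of it. I would also keep at hand the explicit shape of the coefficient polynomials obtained in the course of proving Lemma \ref{lem:concomitant-structure}: for $\alpha\in\{a,b\}$, $q_\alpha(k)$ is a linear combination of the endpoint values $\fn(\alpha),\fn'(\alpha),\dots,\der{\fn}{\theorder-1}(\alpha)$ expanded in the falling-factorial basis $\{\ff{k}{l}\}_{l=0}^{\theorder-1}$, the coefficients being endpoint values of $\coeff_j$ and its derivatives.

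The first substantial step is to show that this identity forces the relevant polynomial to vanish identically. I would split on whether the common sequence $c_k := b^k q_b(k)$ is identically zero. If $c_k\not\equiv 0$, then necessarily $a,b\neq 0$ and $q_a,q_b\not\equiv 0$, so $(b/a)^k = q_a(k)/q_b(k)$ for all large $k$; the left-hand side is geometric whereas the right-hand side is a rational function of $k$, of at most polynomial growth, which is contradictory when $|b|\neq |a|$, and when $|b|=|a|$ (forcing $b=-a$ since $a<b$) the left-hand side alternates in sign while the right-hand side is eventually of constant sign, again a contradiction. Hence $c_k\equiv 0$, i.e.\ $b^k q_b(k)\equiv 0$ and $a^k q_a(k)\equiv 0$ separately. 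As $a<b$, at most one endpoint is zero, so I may fix $\alpha\in\{a,b\}$ with $\alpha\neq 0$; then $\alpha^k q_\alpha(k)\equiv 0$ with $\alpha\neq 0$ yields $q_\alpha(k)=0$ for every $k$, whence $q_\alpha\equiv 0$ as a polynomial.

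With $q_\alpha\equiv 0$ and $\alpha\neq 0$ in hand, I would read off the endpoint jet from the top of the factorial expansion. The highest factorial $\ff{k}{\theorder-1}$ can arise only from the single term $j=\theorder$, $i=l=\theorder-1$, whose coefficient equals $(-1)^{\theorder-1}\alpha^{-(\theorder-1)}\fn(\alpha)\coeff_{\theorder}(\alpha)$; since $q_\alpha\equiv 0$ this vanishes, and as $\coeff_{\theorder}(\alpha)\neq 0$ by hypothesis we conclude $\fn(\alpha)=0$. Substituting this back, every remaining contribution to the coefficient of $\ff{k}{\theorder-2}$ either involves $\fn(\alpha)$ (already zero) or is a nonzero multiple of $\fn'(\alpha)\coeff_{\theorder}(\alpha)$, forcing $\fn'(\alpha)=0$; iterating downward gives $\fn(\alpha)=\fn'(\alpha)=\dots=\der{\fn}{\theorder-1}(\alpha)=0$. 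Finally, because $\coeff_{\theorder}$ does not vanish on $[a,b]$, the equation $\Op\fn=0$ can be put in normal form $\der{\fn}{\theorder} = -\coeff_{\theorder}^{-1}\sum_{j<\theorder}\coeff_j\,\der{\fn}{j}$ with continuous coefficients, so the existence-uniqueness theorem for linear ODEs applied at the base point $\alpha$ with vanishing Cauchy data gives $\fn\equiv 0$.

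I expect the main obstacle to be the second step, converting the single sequence identity $b^k q_b(k)=a^k q_a(k)$ into the identical vanishing of $q_\alpha$: the clean ``exponential dominates polynomial'' intuition must be reinforced to cover the degenerate configurations $b=-a$ and those where an endpoint equals $0$. Only after $q_\alpha\equiv 0$ is secured does the rest reduce to triangular linear algebra in the falling-factorial basis, where the sole genuine input is the standing hypothesis that the leading coefficient $\coeff_{\theorder}$ stays away from zero at the chosen nonzero endpoint — precisely what lets us peel off the jet one derivative at a time and then invoke ODE uniqueness.
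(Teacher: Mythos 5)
Your proof is correct and follows essentially the same route as the paper's: the decomposition $\badpart_k = b^k q_b(k) - a^k q_a(k)$ from Lemma~\ref{lem:concomitant-structure}, the dichotomy forcing $q_\alpha\equiv 0$ at a nonzero endpoint $\alpha$, the triangular peeling of the jet of $\fn$ at $\alpha$ in the falling-factorial basis (using $\coeff_{\theorder}(\alpha)\neq 0$), and ODE uniqueness. If anything, your version is slightly more careful than the paper's: you explicitly dispose of the degenerate configuration $b=-a$, where $(b/a)^k=(-1)^k$ is bounded and the paper's ``exponential versus polynomial growth'' contradiction does not literally apply (you replace it with a sign-alternation argument), and you spell out why one may always select an endpoint $\alpha\neq 0$ whose polynomial $q_\alpha$ vanishes identically.
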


An immediate corollary is that generically we have \f{\diffop \neq \id} (\emph{generically} here means \f{\fn \not \equiv 0}).

With the structure of the sequence \f{\{\badpart_k\}} at hand, we can now explicitly construct an annihilating difference operator for it.

\begin{defn}\label{def:diffop}
Given $j$, $a$ and $b$, let \f{\gdiffop[j](\shift) \isdef (\shift-a\id)^{j} (\shift-b\id)^{j}}.
\end{defn}

\begin{thm}\label{thm:main-differences-generalized}
Let \f{\Op=\sum_{j=0}^\theorder \coeff_j(x) \partial^j}. Then the difference operator \f{\diffop=\gdiffop} annihilates the sequence \f{\badpart_k(\Op,\fn)}.
 \begin{proof}
By Lemma \ref{lem:concomitant-structure}, \f{\badpart_{k} = b^k q_{b}(k) - a^k q_{a}(k)} where \f{\deg q_{a},\;q_{b} \leq \theorder-1}. The factors $(\shift-a\id)$ and $(\shift-b\id)$ commute, so it would be sufficient to show that \f{(\shift-a\id)^\theorder \{a^k q_{a}(k)\} \equiv 0}.

Let \f{P(\shift)=(\shift-a\id)^{\theorder}}, then \f{P(a\shift) = a^{\theorder} \fdiff^{\theorder}}. By Lemma \ref{lem:polynomial-shift}, \f{P(\shift) \{a^k q_{a}(k)\} = a^{k+\theorder} \fdiff^{\theorder} q_{a}(k)}. Since \f{\deg q_{a} \leq \theorder-1}, by Lemma \ref{lem:discrete-polynomial-annihilated-delta-operator} we have \f{\fdiff^{\theorder} q_{a}(k) = 0}.
 \end{proof}
\end{thm}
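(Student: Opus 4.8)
The plan is to exploit the explicit form of $\badpart_k$ furnished by Lemma \ref{lem:concomitant-structure}, namely $\badpart_k = b^k q_b(k) - a^k q_a(k)$ with $\deg q_a, \deg q_b \leq \theorder - 1$, and to reduce the claim to the two standard facts recorded in Lemmas \ref{lem:polynomial-shift} and \ref{lem:discrete-polynomial-annihilated-delta-operator}. Since the two factors $(\shift - a\id)^\theorder$ and $(\shift - b\id)^\theorder$ commute (they are polynomials in the single operator $\shift$), and since $\badpart_k$ splits as a sum of an ``$a$-part'' and a ``$b$-part'', it is enough to show that $(\shift - a\id)^\theorder$ annihilates $a^k q_a(k)$. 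The symmetric statement for the $b$-part is identical, and the remaining factor in each case then leaves an already-zero sequence untouched.

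First I would isolate the single term $a^k q_a(k)$ and apply the operator $P(\shift) = (\shift - a\id)^\theorder$ to it. The key computational step is the conjugation identity of Lemma \ref{lem:polynomial-shift}: taking the base to be $a$ and the discrete function to be $q_a$, one obtains $P(\shift)\{a^k q_a(k)\} = a^k\, P(a\shift)\, q_a(k)$. Now $P(a\shift) = (a\shift - a\id)^\theorder = a^\theorder(\shift - \id)^\theorder = a^\theorder \fdiff^\theorder$, so the whole expression collapses to $a^{k+\theorder}\,\fdiff^\theorder q_a(k)$.

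Finally, because $\deg q_a \leq \theorder - 1$, Lemma \ref{lem:discrete-polynomial-annihilated-delta-operator} (applied with degree $\theorder - 1$ and $i = 1$, so that the exponent is $(\theorder-1)+1 = \theorder$) gives $\fdiff^\theorder q_a(k) \equiv 0$; hence the $a$-part is annihilated. By the same argument $(\shift - b\id)^\theorder$ annihilates $b^k q_b(k)$, and composing the two commuting factors kills $\badpart_k$ in its entirety.

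I do not expect a genuine obstacle here: the statement is essentially a bookkeeping exercise once the structural Lemma \ref{lem:concomitant-structure} is available. The only points requiring a little care are the substitution $\shift \mapsto a\shift$ in Lemma \ref{lem:polynomial-shift} — one must check that $(\shift - a\id)$ indeed becomes $a(\shift - \id)$ after the substitution, which hinges on the $\id$ term transforming correctly — and the off-by-one matching of the degree bound $\theorder - 1$ against the exponent $\theorder$ in $\fdiff^\theorder$.
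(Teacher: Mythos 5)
Your proposal is correct and follows essentially the same route as the paper's own proof: decompose $\badpart_k$ via Lemma \ref{lem:concomitant-structure}, use commutativity to reduce to the factor $(\shift-a\id)^\theorder$ acting on $a^k q_a(k)$, apply Lemma \ref{lem:polynomial-shift} to get $a^{k+\theorder}\fdiff^\theorder q_a(k)$, and finish with Lemma \ref{lem:discrete-polynomial-annihilated-delta-operator}. The two points of care you flag (the substitution $\shift\mapsto a\shift$ and the degree bookkeeping) are exactly the computations the paper performs.
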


\begin{rem}\label{rem:mgf-connection}
Theorem \ref{thm:main-differences-generalized} defines a connection between the moments of the two functions $\fn$ and \f{\Op \fn}: \f{\diffop m_k(\Op \fn) = \diffop \dmop m_k(\fn)}. This implies a connection between their moment generating functions as well (see further Section \ref{sec:forward-equations} and also \cite{kisunko1}).
\end{rem}

\begin{rem}
The polynomial nature of the coefficients $\coeff_j(x)$ of $\Op$ has not been used in the proof of Theorem \ref{thm:main-differences-generalized}. Therefore it is true for every linear operator $\Op$ with sufficiently smooth coefficients $\coeff_j(x)$.
\end{rem}

Now we have all the necessary information in order to prove the main result of this section.
\begin{thm}\label{thm:main-recurrence-theorem}
Assume that \f{\Op \fn \equiv 0} on \f{[a,b]}. Then the sequence $\{m_k(\fn)\}$ satisfies the following recurrence relation:
\begin{align}\label{eq:main-recurrence-explicit}
 \annop m_k \isdef \left(\gdiffop \cdot \dmop_{\Op}\right) m_k = \biggl((\shift-a\id)^\theorder (\shift-b\id)^\theorder \sum_{j=0}^\theorder \sum_{i=0}^{\polymaxorder_j} \polycoeff_{i,j} (-1)^j \ff{i+k}{j} \shift^{i-j}\biggr) m_k = 0
\end{align}
 \begin{proof}
 We have \f{m_k(\Op \fn) \equiv 0}. The proof is completed using \eqref{eq:diffmoments-defs}, \eqref{eq:diffmoments-linear-term} and Theorem \ref{thm:main-differences-generalized}.
 \end{proof}
\end{thm}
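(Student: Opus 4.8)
The plan is to combine the three ingredients that have just been assembled: the additive splitting of the differential moments from \eqref{eq:diffmoments-defs}, the explicit shift-operator form of the ``good part'' from \eqref{eq:diffmoments-linear-term}, and the annihilation of the ``bad part'' supplied by Theorem \ref{thm:main-differences-generalized}. First I would observe that the hypothesis \f{\Op \fn \equiv 0} forces every differential moment to vanish: since \f{\Op \fn} is the zero function, \f{m_k(\Op \fn) = \int_a^b x^k \cdot 0 \, dx = 0} for all \f{k = 0,1,\dotsc}. Substituting this into the decomposition \f{m_k(\Op \fn) = \goodpart_k + \badpart_k} from \eqref{eq:diffmoments-defs} yields the pointwise identity \f{\goodpart_k + \badpart_k = 0} for every \f{k}.

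Next I would rewrite the good part in operator form. By \eqref{eq:diffmoments-linear-term} it is exactly the image of the moment sequence under the difference operator \f{\dmop_{\Op}(k,\shift)} defined in \eqref{eq:mmm-def}, that is \f{\goodpart_k = \dmop_{\Op}(k,\shift)\, m_k}. Hence the identity becomes the single equation \f{\dmop_{\Op} m_k + \badpart_k = 0} between two sequences indexed by \f{k}.

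The final step is to eliminate the (unknown) boundary sequence \f{\badpart_k}. I would apply the difference operator \f{\diffop = \gdiffop} of Definition \ref{def:diffop} from the left, composing it with the operator \f{\dmop_{\Op}} already present, to get \f{(\diffop \cdot \dmop_{\Op}) m_k + \diffop \badpart_k = 0}. By Theorem \ref{thm:main-differences-generalized} the second summand vanishes identically, \f{\diffop \badpart_k \equiv 0}, leaving \f{(\gdiffop \cdot \dmop_{\Op}) m_k = 0}. Unwinding the definitions of \f{\gdiffop} (Definition \ref{def:diffop}) and \f{\dmop_{\Op}} (equation \eqref{eq:mmm-def}) reproduces the explicit form \eqref{eq:main-recurrence-explicit}, upon which I would set \f{\annop \isdef \gdiffop \cdot \dmop_{\Op}}.

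The one point demanding care — the ``hard part'', though it is conceptual rather than laborious — is the passage from the pointwise equality \f{\goodpart_k + \badpart_k = 0} to the operator statement via left-multiplication. The operators \f{\diffop} and \f{\dmop_{\Op}} live in the noncommutative ring of linear difference operators whose coefficients are polynomial in \f{k}: here \f{\diffop} has constant coefficients in \f{\shift}, whereas \f{\dmop_{\Op}} carries \f{k}-dependent coefficients through the falling factorials \f{\ff{i+k}{j}}. I would therefore be explicit that \f{\diffop \badpart_k} denotes the application of \f{\diffop} to the sequence \f{(\badpart_k)_k}, that \f{\diffop \cdot \dmop_{\Op}} is composition of operators in this Ore algebra, and that applying \f{\diffop} to both sides of \f{\dmop_{\Op} m_k + \badpart_k = 0} is legitimate simply because it is a linear operation on sequences. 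Once this bookkeeping is made precise, the annihilation of \f{\badpart_k} by \f{\diffop} established in Theorem \ref{thm:main-differences-generalized} does all of the remaining work, and the recurrence follows.
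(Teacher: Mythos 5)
Your proposal is correct and is essentially the paper's own proof, just written out in full: both arguments note that $\Op \fn \equiv 0$ forces $m_k(\Op\fn)\equiv 0$, invoke the decomposition \eqref{eq:diffmoments-defs} together with \eqref{eq:diffmoments-linear-term} to get $\dmop_{\Op} m_k + \badpart_k = 0$, and then compose with $\gdiffop$ on the left so that Theorem \ref{thm:main-differences-generalized} kills $\badpart_k$. Your added care about composition in the noncommutative ring of difference operators with $k$-dependent coefficients is a sound clarification of a point the paper leaves implicit, not a different argument.
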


\begin{rem}
With respect to $\{m_k\}$, the length of the recurrence relation \eqref{eq:main-recurrence-explicit} is at most \f{3\theorder+\max \polymaxorder_j + 1}. Its coefficients are linear in \f{\polycoeff_{i,j}} and polynomial in $k$.
\end{rem}

\begin{rem}
 The recurrence relation \eqref{eq:main-recurrence-explicit} is not trivial, i.e. \f{\annop \neq 0}. To see this, let \f{\Op \fn \equiv 0}. Now if \f{\fn \not\equiv 0} then by Theorem \ref{thm:vanishing-concomitant} \f{\badpart_k \not\equiv 0} and thus \f{\goodpart_k=\dmop m_k \not\equiv 0 }. It follows that \f{\dmop} cannot be the identical zero operator, and therefore \f{\annop = \diffop \dmop \neq 0}.
\end{rem}

We conclude this section with some examples which demonstrate the usefulness of Theorem \ref{thm:main-recurrence-theorem}.
\begin{example}\label{ex:bessel}
In \cite{BoSa08}, the authors provide explicit recurrence relations satisfied by the moments of the powers of the \emph{modified Bessel function} \f{f(x)=K_0(x)}. The method used to obtain these recurrences is integration by parts. However, an additional condition is imposed - namely, it is required that the integrals \f{\int_{\Gamma} x^k \der{(f(x)^n)}{j} dx} converge and the limits of the integrands coincide at the endpoints of $\Gamma$. In our setting, this is equivalent to putting \f{\diffop=\id}.

The function \f{\fn=K_0^2(x)} is annihilated by the operator \f{\Op=x^2\partial^3+3x\partial^2+(1-4x^2)\partial-4x\id} (\cite[Example 2]{BoSa08}). The only nonzero coefficients are therefore
\[
\begin{tabular}{c|c|c|c}
$\polycoeff_{i,j}$ & $i=0$ & $i=1$ & $i=2$ \\
\hline
$j=0$ &  & -4 &  \\
$j=1$ & 1 &  & -4 \\
$j=2$ &  & 3 &  \\
$j=3$ &  &  & 1 \\
\end{tabular}
\]
By \eqref{eq:main-recurrence-explicit} we have
\[
-4m_{k+1}-km_{k-1}+4(k+2)m_{k+1}+3k(k+1)m_{k-1}-(k+2)(k+1)km_{k-1}=0
\]
which is just \f{4(k+1)m_{k+1}=k^3m_{k-1}}. This result agrees with \cite[Example 3]{BoSa08}.
\qed
\end{example}

\begin{example}[Example \ref{ex:direct-exponential} continued]\label{ex:exponential-recurrence}
 \f{f=\alpha e^{\beta x}} is annihilated by the operator \f{\Op=\partial - \beta \id}. Thus \f{\polycoeff_{0,0}=-\beta} and \f{\polycoeff_{1,0}=1}. By \eqref{eq:main-recurrence-explicit} the sequence $\{m_k\}$ on \f{[0,1]} satisfies
\[
 \annop = \shift (\shift-\id)(-\beta\id-k\shift^{-1})m_k=-\beta m_{k+2} + (\beta-(k+2)) m_{k+1} + (k+1)m_k = 0
\]
This recurrence relation with polynomial coefficients may be solved explicitly using computer algebra tools (see \cite{weixlbaumer2001sde} for an overview of the existing algorithms). Using the Maxima computer algebra system (\cite{schelter2001mm}), the following explicit formula was obtained:
\begin{align*}
 m_k &= \alpha \int_0^1 x^k e^{\beta x} dx = \frac{(-1)^k k!}{\beta^k} \biggl\{ c_1 + c_2 \sum_{j=0}^{k-1} \frac{\beta^j (-1)^{j-1}}{(j+1)!} \biggr\}\\
c_1 &= m_0\\
c_2 &= \beta m_1+m_0
\end{align*}
This example is further continued in Example \ref{ex:firstorder}.
\qed
\end{example}

\subsection{Piecewise case}\label{sec:piecewise-dfinite}
\begin{figure}
\centering
\psset{xunit=.5pt,yunit=.5pt,runit=.5pt}
\begin{pspicture}(663.50164795,324.19845581)
{
\newrgbcolor{curcolor}{0 0 0}
\pscustom[linewidth=1.46649683,linecolor=curcolor,linestyle=dashed,dash=4.39949053 4.39949053]
{
\newpath
\moveto(268.253057,318.09554581)
\lineto(268.253057,35.87023581)
}
}
{
\newrgbcolor{curcolor}{0 0 0}
\pscustom[linewidth=1.46649683,linecolor=curcolor,linestyle=dashed,dash=4.39949053 4.39949053]
{
\newpath
\moveto(153.680877,34.96191581)
\lineto(153.680877,317.18722581)
}
}
{
\newrgbcolor{curcolor}{0 0 0}
\pscustom[linewidth=1.46649683,linecolor=curcolor,linestyle=dashed,dash=4.39949053 4.39949053]
{
\newpath
\moveto(17.360282,41.23989581)
\lineto(17.360282,323.46520781)
}
}
{
\newrgbcolor{curcolor}{0 0 0}
\pscustom[linewidth=1.08071089,linecolor=curcolor]
{
\newpath
\moveto(15.266137,37.40652581)
\lineto(267.002347,36.03931581)
}
}
{
\newrgbcolor{curcolor}{0 0 0}
\pscustom[linewidth=1.46649683,linecolor=curcolor,linestyle=dashed,dash=4.39949053 4.39949053]
{
\newpath
\moveto(660.627627,318.09554581)
\lineto(660.627627,35.87023581)
}
}
{
\newrgbcolor{curcolor}{0 0 0}
\pscustom[linewidth=1.46649683,linecolor=curcolor,linestyle=dashed,dash=4.39949053 4.39949053]
{
\newpath
\moveto(473.305157,34.96191581)
\lineto(473.305157,317.18722581)
}
}
{
\newrgbcolor{curcolor}{0 0 0}
\pscustom[linewidth=1.46649683,linecolor=curcolor,linestyle=dashed,dash=4.39949053 4.39949053]
{
\newpath
\moveto(360.837117,41.23989581)
\lineto(360.837117,323.46520781)
}
}
{
\newrgbcolor{curcolor}{0 0 0}
\pscustom[linewidth=1.15019548,linecolor=curcolor]
{
\newpath
\moveto(358.771217,35.35582581)
\lineto(662.926537,34.07405581)
}
}
{
\newrgbcolor{curcolor}{0 0 0}
\pscustom[linewidth=0.82122475,linecolor=curcolor,linestyle=dashed,dash=2.46367434 2.46367434]
{
\newpath
\moveto(359.517737,35.90957581)
\lineto(270.013357,35.90957581)
}
}
{
\newrgbcolor{curcolor}{0 0 0}
\pscustom[linewidth=1.46649683,linecolor=curcolor]
{
\newpath
\moveto(17.393624,131.30954581)
\curveto(17.393624,131.30954581)(77.820097,97.64875581)(112.803847,131.30954581)
\curveto(147.787607,164.97030581)(153.353197,155.35294581)(153.353197,155.35294581)
}
}
{
\newrgbcolor{curcolor}{0 0 0}
\pscustom[linewidth=1.46649683,linecolor=curcolor]
{
\newpath
\moveto(153.353197,199.83325581)
\curveto(153.353197,199.83325581)(161.304047,229.88751581)(178.000837,211.85495581)
\curveto(194.697627,193.82239581)(209.009167,130.10736581)(233.656817,162.56596581)
\curveto(258.304447,195.02457581)(267.845467,184.20503581)(267.845467,184.20503581)
}
}
{
\newrgbcolor{curcolor}{0 0 0}
\pscustom[linewidth=1.46649683,linecolor=curcolor]
{
\newpath
\moveto(360.870437,150.54425581)
\curveto(360.870437,150.54425581)(391.878767,155.35294581)(412.550987,169.77898581)
\curveto(433.223207,184.20503581)(450.715077,204.64192581)(461.846267,228.68534581)
\curveto(469.002037,244.31355581)(472.977457,267.15480581)(472.977457,267.15480581)
}
}
{
\newrgbcolor{curcolor}{0 0 0}
\pscustom[linewidth=1.46649683,linecolor=curcolor]
{
\newpath
\moveto(472.977457,297.20906581)
\curveto(554.076157,275.56999581)(509.551387,209.45061581)(571.568037,170.98116581)
\curveto(633.584677,132.51169581)(659.822497,193.82239581)(659.822497,193.82239581)
}
}
{
\newrgbcolor{curcolor}{0 0 0}
\pscustom[linewidth=1,linecolor=curcolor,linestyle=dashed,dash=2 2]
{
\newpath
\moveto(270.523117,184.14826581)
\curveto(347.523117,134.14826581)(359.523117,151.14826581)(359.523117,151.14826581)
}
}
\put(-10,0){$a=\xi_0$}
\put(150,0){$\xi_1$}
\put(260,0){$\xi_2$}
\put(295,0){$\dots$}
\put(335,0){$\xi_{\np-1}$}
\put(460,0){$\xi_{\np}$}
\put(630,0){$\xi_{\np+1}=b$}
\tiny
\put(50,80){$\Op_0 \fn_0 = 0$}
\put(60,130){$\fn_0(x)$}
\put(170,80){$\Op_1 \fn_1 = 0$}
\put(180,250){$\fn_1(x)$}
\put(365,80){$\Op_{\np-1} \fn_{\np-1} = 0$}
\put(380,200){$\fn_{\np-1}(x)$}
\put(530,80){$\Op_\np \fn_\np = 0$}
\put(560,210){$\fn_\np(x)$}
\end{pspicture}
\caption{Piecewise $D$-finite function}
\label{fig:piecewise}
\end{figure}
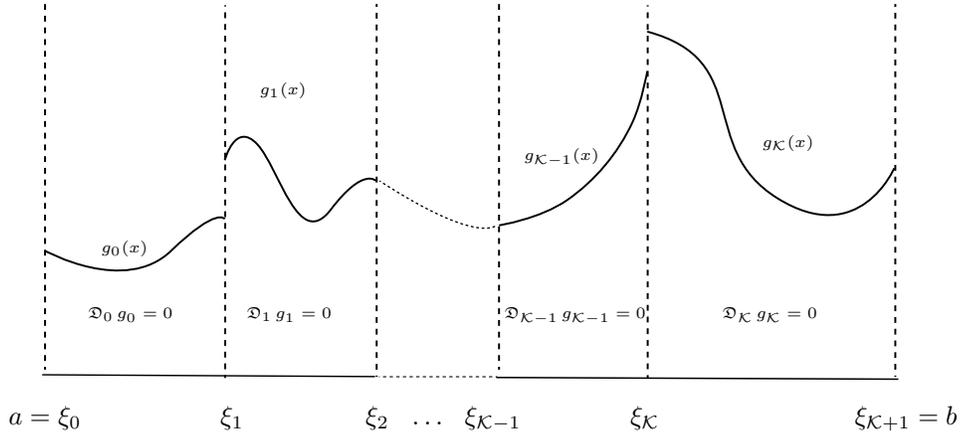

Until now we have been considering a function $\fn$ which satisfies \f{\Op \fn = 0} on a \emph{single interval} \f{[a,b]}. In particular, we have seen that the sequence of the moments of $\fn$ satisfies a linear  recurrence relation whose coefficients linearly depend on the coefficients of $\Op$. Now we are going to consider the piecewise case (depicted in Figure \ref{fig:piecewise}) where $\fn$ is assumed to consist of several ``pieces'' \f{\fn_0,\fn_1,\dotsc,\fn_\np}. On each continuity interval \f{\Delta_n=[\xi_n,\xi_{n+1}]} the $n$-th piece of $\fn$ satisfies \f{\Op_n \fn_n = 0} where \f{\Op_n = \sum_{j=0}^{\theorder_n} \sum_{i=0}^{\polymaxorder_n} (\polycoeff_{i,j,n} x^i) \partial^j}. Denote \f{m_{k,n} \isdef \int_{\xi_n}^{\xi_{n+1}} x^k \fn_n(x)dx}. Our goal is to find a recurrence relation satisfied by the sequence \f{m_k = \sum_{n=0}^\np m_{k,n}}.

As we shall see below, this recurrence relation is particularly easy to find explicitly in the case \f{\fn \in \rpaclass}, i.e. \f{\Op_n=\Op} for \f{n=0,\dotsc,\np}. We shall also discuss the general case where \f{\Op_m \neq \Op_n} briefly.

\subsubsection{The same operator on each interval}
We present two methods for computing the desired recurrence relation. Then we show that these  methods indeed produce the same result.

\begin{thm}[Method I]
Let \f{\fn \in \rpaclass} with \f{\Op_m=\Op} for \f{0 \leq m \leq \np}. Then $\{m_k(\fn)\}$ satisfies
\begin{align}\label{eq:recurrence-entire-piecewise}
 \biggl(\prod_{n=0}^{\np+1} (\shift-\xi_n \id)^\theorder \dmop_{\Op}(k,\shift) \biggr) m_k &= 0
\end{align}
\begin{proof}
 By Theorem \ref{thm:main-recurrence-theorem}, for every \f{n=0,\dots,\np} the sequence \f{\{m_{k,n}\}} satisfies
\begin{align}\label{eq:recurrence-for-each-piece}
\bigl(\ggdiffop{\theorder}{\xi_n}{\xi_{n+1}}(\shift) \cdot \dmop_{\Op}(k,\shift)\bigr) m_{k,n} &= 0
\end{align}
 \eqref{eq:recurrence-entire-piecewise} follows by using the fact that the linear factors \f{(\shift-\xi_i \id)} and \f{(\shift-\xi_j \id)} commute.
\end{proof}
\end{thm}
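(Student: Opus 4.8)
The plan is to reduce the piecewise statement to the single-interval result of Theorem \ref{thm:main-recurrence-theorem}, exploiting the linearity of difference operators together with the commutativity of the linear shift-factors. Since $m_k = \sum_{n=0}^{\np} m_{k,n}$ and every difference operator acts linearly on sequences, it suffices to exhibit a \emph{single} operator that annihilates each partial moment sequence $\{m_{k,n}\}$; applying it to the sum then annihilates $m_k$.

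First I would treat each piece $\fn_n$ as a $D$-finite function on its own continuity interval $\Delta_n = [\xi_n,\xi_{n+1}]$, satisfying $\Op \fn_n \equiv 0$. Invoking Theorem \ref{thm:main-recurrence-theorem} with the endpoints $a \mapsto \xi_n$ and $b \mapsto \xi_{n+1}$ shows that the operator $(\shift - \xi_n \id)^\theorder (\shift - \xi_{n+1}\id)^\theorder \, \dmop_\Op(k,\shift)$ annihilates $m_{k,n}$, where the two leading factors are precisely $\ggdiffop{\theorder}{\xi_n}{\xi_{n+1}}$ of Definition \ref{def:diffop}. The crucial observation is that $\dmop_\Op(k,\shift)$ depends only on the coefficients of $\Op$ and not on the interval endpoints, so the very same operator $\dmop_\Op$ appears on the right for \emph{every} piece; only the pair of linear factors standing to its left varies with $n$.

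Next I would assemble the global operator of \eqref{eq:recurrence-entire-piecewise}. The full product $\prod_{m=0}^{\np+1} (\shift - \xi_m \id)^\theorder$ already contains, for each index $n$, both factors $(\shift - \xi_n \id)^\theorder$ and $(\shift - \xi_{n+1}\id)^\theorder$ needed to kill $m_{k,n}$. Being polynomials in $\shift$ with constant coefficients, these factors commute with one another, so I may reorder the product to place exactly those two factors immediately to the left of $\dmop_\Op$. For a fixed $n$ the global operator then factors as $\bigl[\prod_{m \neq n,\, n+1}(\shift - \xi_m \id)^\theorder\bigr]\cdot\bigl[(\shift - \xi_n \id)^\theorder (\shift - \xi_{n+1}\id)^\theorder \dmop_\Op \bigr]$; the inner bracket sends $m_{k,n}$ to $0$ by the previous paragraph, and the outer factors map $0$ to $0$. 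Hence the global operator annihilates each $m_{k,n}$, and by linearity it annihilates $m_k$, which is \eqref{eq:recurrence-entire-piecewise}.

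The one point demanding genuine care — and the only place the argument could go astray — is this reordering: $\dmop_\Op(k,\shift)$ must remain to the right of every shift-factor throughout. Its coefficients involve the falling factorials $\ff{i+k}{j}$ and hence depend on $k$, so $\dmop_\Op$ does \emph{not} commute with the operators $(\shift - \xi_i \id)$; the commutativity I invoke is strictly among the constant-coefficient linear factors $(\shift - \xi_i \id)$ themselves. Once this ordering is respected, everything else is formal bookkeeping, and I anticipate no further difficulty.
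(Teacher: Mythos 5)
Your proof is correct and follows essentially the same route as the paper's: apply Theorem \ref{thm:main-recurrence-theorem} on each continuity interval $[\xi_n,\xi_{n+1}]$, then use commutativity of the constant-coefficient factors $(\shift-\xi_i\id)$ to absorb each per-piece annihilator into the global product, and finish by linearity over $m_k=\sum_n m_{k,n}$. You merely spell out what the paper leaves implicit, including the worthwhile caution that $\dmop_{\Op}(k,\shift)$ must stay to the right of the shift-factors since its $k$-dependent coefficients prevent it from commuting with them.
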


It turns out that the formula \eqref{eq:recurrence-entire-piecewise} may be obtained by considering the \emph{piecewise} function \f{\fn \in \rpaclass} being annihilated \emph{as a distribution on \f{[a,b]}} by some operator \f{\newop=\newop(\Op,\{\xi_n\})}. We now derive the explicit expression for $\newop$.

The general theory of distributions (generalized functions) may be found in \cite{gelfand1958gfa}. By a \emph{test function} we shall mean any \f{f \in C^{\theorder-1}([a,b])} (in fact, for our purposes it is sufficient to consider just the moments \f{f=x^k}).

We shall identify the discontinuous function $\fn$ with the following distribution (here \f{\fn_{-1} \equiv 0} by definition):
\begin{subequations}\label{eq:piecewise-full-def}
\begin{align}
\fn(x) &= \n{0}+\sum_{n=1}^\np \n{n}(x) \step(x-\xi_n)\\
\n{n} &\isdef \fn_n-\fn_{n-1}\\
\step(x) &\isdef \stepdefinition
\end{align}
\end{subequations}
The functions \f{\n{n}(x)} belong to the solution space of \f{\Op} on \f{\Delta_n}, and thus \f{\n{n}(x) \in C^{\theorder-1}(\Delta_n)}. Since \f{\coeff_n(x)\neq 0} on \f{[a,b]}, we even have \f{\n{n}(x) \in C^{\theorder-1}([a,b])}.

The derivative of $\step$ is the Dirac \f{\delta}. The distribution \f{\delta(x)} has the following properties:
\begin{align*}
 \langle \delta(x-t), f(x) \rangle &= f(t)\\
 \langle \der{\delta}{j}(x-t), f(x) \rangle &= (-1)^j \der{f}{j}(t)
\end{align*}
The second equality is valid provided $f(x)$ is $j$-times differentiable at \f{x=t}.

Now let \f{\fn(x) = c\cdot\delta(x-\xi)} for \f{c,\xi \in \reals}. This distribution is ``annihilated'' by the operator $\Op=(x-\xi)\id$ in the sense that \f{\langle \Op \fn, f \rangle = \langle  c \cdot (x-\xi)\delta(x-\xi),f \rangle = 0} for every test function $f$. By linearity, the operator \f{\Op=\prod_{i=1}^{\np}(x-\xi_i) \id} annihilates every $\fn$ of the form \f{\fn = \sum_{i=1}^\np c_i \delta(x-\xi_i)}.

\begin{lem}\label{lem:deltader-annihilation}
Let  \f{\fn = \sum_{i=1}^\np \sum_{j=0}^{\nders-1} c_{ij}(x) \der{\delta}{j}(x-\xi_i)} such that \f{c_{ij}(x)} is \f{j}-times differentiable at  $\xi_i$. Then $\fn$ is is annihilated by \f{\Op = p(x) \id} where \f{p(x) = \prod_{i=1}^\np (x-\xi_i)^\nders}.
\begin{proof}
For every test function $f(x)$ we have \[\langle f, \Op \fn \rangle = \langle p(x) f(x), \fn \rangle = \sum_{i=1}^\np \sum_{j=0}^{\nders-1} \langle  c_{ij}(x) p(x)f(x), \der{\delta}{j}(x-\xi_i) \rangle\] The function \f{r(x)=c_{ij}(x)p(x)f(x)} has a zero of  order $\nders$ at $x=\xi_i$. Therefore, all its derivatives up to \f{j} vanish at $\xi_i$ and so $\langle f, \Op \fn \rangle = 0$.
\end{proof}
\end{lem}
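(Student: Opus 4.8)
The plan is to verify the defining property of distributional annihilation directly: I must show that $\langle \Op \fn, f \rangle = 0$ for every test function $f \in C^{\theorder-1}([a,b])$. Since $\Op = p(x)\id$ acts by multiplication by the polynomial $p$, and multiplication of a distribution by a smooth function is defined by transferring that factor onto the test function, the pairing becomes $\langle \Op\fn, f\rangle = \langle \fn, p f\rangle$. I would then substitute the definition of $\fn$ and use linearity of the pairing to reduce the problem to a single elementary term for each index pair $(i,j)$. This is the natural generalization of the base case established just above the statement, where $\Op = \prod_{i=1}^\np (x-\xi_i)\id$ was shown to annihilate $\sum_i c_i\delta(x-\xi_i)$; here the higher power $\nders$ is precisely what is needed to also kill the derivatives $\der{\delta}{j}$ up to order $\nders-1$.

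For each such term, the coefficient $c_{ij}(x)$ likewise transfers onto the test factor, so I am left with a pairing of the form $\langle \der{\delta}{j}(x-\xi_i),\, c_{ij}(x)\,p(x)\,f(x)\rangle$. Writing $r(x) = c_{ij}(x)\,p(x)\,f(x)$ and invoking the stated action of the derivatives of the Dirac delta, this pairing equals $(-1)^j \der{r}{j}(\xi_i)$. The hypothesis that $c_{ij}$ is $j$-times differentiable at $\xi_i$, together with the smoothness of the polynomial $p$ and the regularity of $f$, guarantees that this derivative is well defined, so the manipulation is legitimate.

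The crux is then a local vanishing argument at $x=\xi_i$. By construction $p(x)=\prod_{l=1}^\np (x-\xi_l)^\nders$ contains the factor $(x-\xi_i)^\nders$, so $r$ has a zero of order at least $\nders$ at $\xi_i$. Differentiating $r$ at most $j \leq \nders-1$ times, I would apply the Leibniz rule and observe that every resulting summand still retains a positive power of $(x-\xi_i)$ — because the factor $(x-\xi_i)^\nders$ is differentiated strictly fewer than $\nders$ times — and therefore vanishes at $\xi_i$. Hence $\der{r}{j}(\xi_i)=0$ for every admissible $j$, each term of the double sum vanishes, and $\langle \Op\fn, f\rangle = 0$ for all test functions, which is the assertion.

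I do not anticipate a serious obstacle: the statement is essentially the distributional incarnation of the fact that a zero of multiplicity $\nders$ annihilates the delta and its derivatives up to order $\nders-1$. The only point demanding care is the differentiability bookkeeping — I must ensure that $r$ is genuinely $j$-times differentiable at $\xi_i$ so that the pairing with $\der{\delta}{j}$ makes sense, which is exactly why the hypothesis requires only $j$-fold differentiability of $c_{ij}$ rather than full smoothness.
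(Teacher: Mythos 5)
Your proposal is correct and follows essentially the same route as the paper's proof: transfer the multiplication by $p(x)$ (and $c_{ij}(x)$) onto the test function, reduce by linearity to termwise pairings with $\der{\delta}{j}(x-\xi_i)$, and observe that $r(x)=c_{ij}(x)p(x)f(x)$ has a zero of order $\nders > j$ at $\xi_i$, so its derivatives through order $j$ vanish there. Your explicit Leibniz-rule justification of that last vanishing step is merely a more detailed spelling-out of what the paper states in one line.
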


\begin{thm}\label{thm:piecewise-annihilating-operator}
Assume $\fn \in \rpaclass$ as in \eqref{eq:piecewise-full-def}, with $\Op$ of degree $\theorder$ annihilating every piece of $\fn$. Then the entire $\fn$ is annihilated as a distribution by the operator \f{\newop = \prod_{i=1}^{\np} (x-\xi_i)^{\theorder} \Op}.
\begin{proof}
By \eqref{eq:piecewise-full-def} and the fact that \f{\Op \widetilde{\fn_n}=0} we obtain
\begin{align*}
\Op \fn &= \Op \n{0}+\sum_{j=0}^\theorder \coeff_j(x) \partial^j \bigg\{ \sum_{i=1}^\np \n{i}(x) \step(x-\xi_i) \bigg\}\\
&= 0+ \sum_{i=1}^\np \sum_{j=0}^\theorder \coeff_j(x) \sum_{k=0}^j {j \choose k} \partial^{j-k} \big\{ \n{i}(x)\big\} \partial^k \big\{ \step(x-\xi_i) \big\}\\
&= \sum_{i=1}^\np \sum_{j=0}^\theorder \sum_{k=1}^j \coeff_j(x) {j \choose k} \partial^{j-k} \big\{\n{i}(x) \big\} \partial^k \big\{\step(x-\xi_i)\big\}+\sum_{i=1}^\np \step(x-\xi_i) \sum_{j=0}^\theorder \coeff_j(x) \partial^j\n{i}(x)\\
&= \sum_{i=1}^\np \sum_{k=0}^{\theorder-1} h_{ik}(x) \der{\delta}{k}(x-\xi_i)+\sum_{i=1}^\np \step(x-\xi_i) \Op \n{i}\\
&= \sum_{i=1}^\np \sum_{k=0}^{\theorder-1} h_{ik}(x) \der{\delta}{k}(x-\xi_i)
\end{align*}
where \f{h_{ik}(x)} is $k$-times differentiable at \f{\xi_i}. Then apply Lemma \ref{lem:deltader-annihilation}.
\end{proof}
\end{thm}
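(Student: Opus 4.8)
The plan is to apply the operator $\Op$ directly to the distributional representation \eqref{eq:piecewise-full-def} of $\fn$, show that $\Op\fn$ is a finite sum of derivatives of Dirac deltas supported at the jump points $\{\xi_i\}$, and then eliminate these singular contributions by multiplying by $\prod_{i=1}^{\np}(x-\xi_i)^{\theorder}$ via Lemma \ref{lem:deltader-annihilation}. In other words, I would factor $\newop = \bigl(\prod_{i=1}^{\np}(x-\xi_i)^{\theorder}\id\bigr)\cdot\Op$ and handle the two factors in succession.

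First I would substitute $\fn = \n{0} + \sum_{i=1}^{\np}\n{i}(x)\step(x-\xi_i)$ and expand $\Op\fn$ term by term. The leading piece contributes $\Op\n{0} = \Op\fn_0 = 0$ by hypothesis. For each product $\n{i}(x)\step(x-\xi_i)$ I would apply the distributional Leibniz rule to $\partial^j\{\n{i}(x)\step(x-\xi_i)\}$. The crucial point is that $\partial^k\step(x-\xi_i) = \der{\delta}{k-1}(x-\xi_i)$ for $k\geq 1$, while the $k=0$ term is simply $\step(x-\xi_i)\partial^j\n{i}(x)$. Summing the $k=0$ terms over $j$ and weighting by $\coeff_j(x)$ reconstitutes $\sum_{i=1}^{\np}\step(x-\xi_i)\,\Op\n{i}$, which vanishes because each $\n{i} = \fn_i - \fn_{i-1}$ is a difference of solutions and hence lies in $\nullsp{\Op}$. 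After reindexing the surviving $k\geq 1$ terms by the order of the delta-derivative, this leaves an expression of the form $\sum_{i=1}^{\np}\sum_{k=0}^{\theorder-1} h_{ik}(x)\,\der{\delta}{k}(x-\xi_i)$, where each $h_{ik}(x)$ is an explicit linear combination of the polynomials $\coeff_j(x)$ and the derivatives $\partial^{\,j-k-1}\n{i}(x)$ with $k+1\leq j\leq\theorder$.

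Once $\Op\fn$ is in this canonical form, I would invoke Lemma \ref{lem:deltader-annihilation} with $\nders = \theorder$ and coefficient functions $h_{ik}$, concluding that $\bigl(\prod_{i=1}^{\np}(x-\xi_i)^{\theorder}\id\bigr)(\Op\fn) = 0$, which is exactly $\newop\fn = 0$. The main obstacle is the smoothness bookkeeping that legitimizes this last step: Lemma \ref{lem:deltader-annihilation} requires each $h_{ik}$ to be $k$-times differentiable at $\xi_i$. I would verify this by noting that differentiating $h_{ik}$ up to $k$ times involves at worst the derivative $\partial^{\,j-1}\n{i}$ with $j\leq\theorder$, i.e.\ at most $\partial^{\,\theorder-1}\n{i}$; since the nonvanishing of $\coeff_{\theorder}(x)$ on $[a,b]$ guarantees $\n{i}\in C^{\theorder-1}([a,b])$ and the $\coeff_j$ are polynomials, each $h_{ik}$ is indeed sufficiently regular, and the proof closes.
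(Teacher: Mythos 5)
Your proposal is correct and follows essentially the same route as the paper's own proof: expand $\Op\fn$ via the distributional Leibniz rule applied to \eqref{eq:piecewise-full-def}, observe that the Heaviside terms vanish because each $\n{i}\in\nullsp{\Op}$, collect the surviving terms as $\sum_{i}\sum_{k=0}^{\theorder-1}h_{ik}(x)\der{\delta}{k}(x-\xi_i)$, and finish with Lemma \ref{lem:deltader-annihilation}. Your explicit verification of the $k$-times differentiability of the $h_{ik}$ (via $\n{i}\in C^{\theorder-1}([a,b])$, which the paper establishes just before the theorem) is in fact slightly more careful bookkeeping than the paper provides.
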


Theorem \ref{thm:piecewise-annihilating-operator} will later serve as the basis for recovering the locations of the discontinuities of \f{\fn \in \rpaclass}. The factor \f{\prod (x-\xi_i)^{\theorder}} effectively ``encodes'' the positions of the jumps into the operator itself. The ``decoding'' will then simply be to find these ``extra'' roots, once the ``enlarged'' operator is reconstructed.

The second form of the recurrence relation now immediately follows from Theorem \ref{thm:piecewise-annihilating-operator}.

\begin{thm}[Method II]\label{thm:piecewise-recurrence}
 The sequence of the moments of \f{\fn \in \rpaclass} satisfies the recurrence relation
\begin{align}\label{eq:piecewise-moments-recurrence-2}
 \bigl( \gdiffop(\shift) \cdot \dmop_{\newop}(k,\shift) \bigr) m_k = 0
\end{align}
where \f{\newop = \prod_{n=1}^\np (x-\xi_n)^\theorder \Op}.
\begin{proof}
 Theorem \ref{thm:main-recurrence-theorem} combined with Theorem \ref{thm:piecewise-annihilating-operator}.
\end{proof}
\end{thm}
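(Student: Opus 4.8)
The plan is to apply Theorem \ref{thm:main-recurrence-theorem} essentially verbatim, but with the operator $\Op$ replaced throughout by the enlarged operator $\newop = \prod_{n=1}^{\np}(x-\xi_n)^{\theorder}\Op$. The entire justification for this substitution is Theorem \ref{thm:piecewise-annihilating-operator}, which guarantees that $\newop$ annihilates $\fn$ \emph{as a distribution}; in particular $m_k(\newop\fn)=\langle \newop\fn,x^k\rangle = 0$ for every $k$, and this is exactly what plays the role of the hypothesis ``$\Op\fn\equiv 0$'' in Theorem \ref{thm:main-recurrence-theorem}. I would then retrace the three ingredients that drive the single-interval proof and verify that each survives for $\newop$ and the piecewise $\fn$: (i) the decomposition $m_k(\newop\fn)=\goodpart_k(\newop,\fn)+\badpart_k(\newop,\fn)$ arising from Green's formula \eqref{eq:green}; (ii) the algebraic identity $\goodpart_k(\newop,\fn)=\dmop_{\newop}(k,\shift)\,m_k$ from \eqref{eq:diffmoments-linear-term}; and (iii) the annihilation $\gdiffop\,\badpart_k(\newop,\fn)\equiv 0$ furnished by Theorem \ref{thm:main-differences-generalized}.

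Ingredient (ii) is purely formal: it only expands the adjoint $\newop^{*}(x^k)$ into monomials and therefore depends on nothing but the coefficients of $\newop$ and the moments $m_k$ of $\fn$. It is worth noting here that multiplying $\Op$ by the polynomial $p(x)=\prod_{n=1}^{\np}(x-\xi_n)^{\theorder}$ raises the degree of its coefficients but \emph{not} its order of differentiation, so $\newop$ still has differential order $\theorder$. This is precisely why the correction factor in \eqref{eq:piecewise-moments-recurrence-2} is $\gdiffop=(\shift-a\id)^{\theorder}(\shift-b\id)^{\theorder}$ and not a higher power, matching the statement.

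The one point that genuinely needs care, and which I expect to be the main obstacle, is ingredients (i) and (iii): in the single-interval case $\fn$ was a classical $C^{\theorder}$ function and $\badpart_k$ was a clean two-endpoint concomitant, whereas now $\fn$ carries interior jumps. I would handle this by integrating the Lagrange identity \eqref{eq:lagrange} for $\newop$ over each subinterval $\Delta_n$ separately, on which $\newop\fn_n=p(x)\Op\fn_n\equiv 0$ classically, and summing. The resulting telescoping boundary sum produces, besides the genuine endpoint terms at $a$ and $b$, one interior contribution at each jump $\xi_i$. The crucial observation is that every term of the bilinear concomitant \eqref{eq:concomitant-explicit} of $\newop$ is assembled from the coefficients $p(x)\coeff_j(x)$ together with their derivatives of order at most $\theorder-1$; since $p(x)$ vanishes to order $\theorder$ at each $\xi_i$, all of these vanish at $\xi_i$, so the interior contributions cancel identically. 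What remains is exactly a two-endpoint concomitant, and because $p(a),p(b)\neq 0$ it depends only on the smooth boundary values of $\fn_0$ at $a$ and of $\fn_{\np}$ at $b$.

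Consequently Lemma \ref{lem:concomitant-structure}, applied to $\newop$ of differential order $\theorder$, gives $\badpart_k(\newop,\fn)=b^k q_b(k)-a^k q_a(k)$ with $\deg q_a,\,q_b\leq\theorder-1$, whence Theorem \ref{thm:main-differences-generalized} yields $\gdiffop\,\badpart_k\equiv 0$. Combining (i)--(iii) with $m_k(\newop\fn)=0$ gives $\dmop_{\newop}\,m_k=-\badpart_k$, and applying $\gdiffop$ produces $\gdiffop\cdot\dmop_{\newop}\,m_k=-\gdiffop\,\badpart_k=0$, which is \eqref{eq:piecewise-moments-recurrence-2}. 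As a final consistency check I would confirm agreement with Method I, i.e. that $\gdiffop\cdot\dmop_{\newop}$ and $\prod_{n=0}^{\np+1}(\shift-\xi_n\id)^{\theorder}\dmop_{\Op}$ annihilate the same moment sequence, the extra interior factors $(\shift-\xi_i\id)^{\theorder}$ being the shift-operator trace of the polynomial multiplier $p(x)$.
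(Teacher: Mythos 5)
Your proof is correct and takes essentially the same route as the paper, whose entire proof is the one-line combination of Theorem \ref{thm:main-recurrence-theorem} with Theorem \ref{thm:piecewise-annihilating-operator}: apply the single-interval recurrence machinery to the enlarged operator $\newop$. The only difference is that you spell out what the paper leaves implicit --- that the interval-by-interval Lagrange-identity argument produces no interior concomitant contributions, since $p(x)=\prod_{n=1}^{\np}(x-\xi_n)^{\theorder}$ and its derivatives up to order $\theorder-1$ vanish at each $\xi_i$, and that $\newop$ retains differential order $\theorder$ so the factor $\gdiffop$ needs no higher power --- which is a careful elaboration of the same argument, not a different method.
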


\newcommand{\verynewop}{\ensuremath{\Op^{\bigstar}}}
\begin{lem}[Equivalence of Methods I and II]\label{lem:operator-multiplication-equivalence}
 Let \f{q(x)} be an arbitrary polynomial. Then
 \[\dmop_{q(x) \Op}(k,\shift)=q(\shift) \dmop_{\Op}(k,\shift)\]
\end{lem}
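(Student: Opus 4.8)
The plan is to verify the identity directly at the level of the formal difference operators defined in \eqref{eq:mmm-def}, the only genuine subtlety being that $\shift$ does \emph{not} commute with the $k$-dependent coefficients. I would then indicate a shorter conceptual proof via adjoints that explains why the identity must hold in the first place.

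First I would record the two facts that drive the computation. Writing $q(x)=\sum_l q_l x^l$, the operator $q(x)\Op$ has the same differential part as $\Op$ but with each coefficient replaced by $q(x)\coeff_j(x)=\sum_{l,i} q_l \polycoeff_{i,j} x^{i+l}$; hence the coefficient of $x^{r}$ in its $j$-th polynomial coefficient is the discrete convolution $\sum_{i+l=r} q_l \polycoeff_{i,j}$ of the coefficient sequences of $q$ and $\coeff_j$. Substituting this into the definition \eqref{eq:mmm-def} of $\dmop$ and relabelling $r=i+l$ gives
\[
\dmop_{q(x)\Op}(k,\shift)=\sum_{l,i,j} q_l \polycoeff_{i,j} (-1)^j \ff{(i+l)+k}{j}\,\shift^{(i+l)-j}.
\]
On the other hand, in the ring of difference operators with coefficients depending on $k$ one has the commutation rule $\shift^l\,\ff{i+k}{j}=\ff{i+l+k}{j}\,\shift^l$, because left-multiplication by $\shift^l$ advances the index $k$ inside every coefficient. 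Applying this while expanding $q(\shift)=\sum_l q_l\shift^l$ against $\dmop_\Op(k,\shift)=\sum_{i,j}\polycoeff_{i,j}(-1)^j\ff{i+k}{j}\shift^{i-j}$ produces exactly the same triple sum, which proves the claim.

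The step I expect to be the true crux is precisely this commutation: one must resist reading $q(\shift)\dmop_\Op$ as though $\shift^l$ acted only on the sequence index and not on the falling factorials $\ff{i+k}{j}$. Once the shift is correctly propagated into the argument of $\ff{\cdot}{j}$, the two expressions coincide term-by-term and nothing beyond the relabelling $r=i+l$ is required; all remaining manipulations are routine bookkeeping of a polynomial product, with no cancellation involved.

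Finally, I would remark on a cleaner conceptual derivation that I find more illuminating. Since multiplication by $q(x)$ is self-adjoint for $\langle u,v\rangle=\int_a^b uv\,dx$ and formal adjoints reverse composition, one has $\bigl(q(x)\Op\bigr)^*=\Op^*\{q(x)\,\cdot\,\}$, so that $\bigl(q(x)\Op\bigr)^*(x^k)=\Op^*\bigl(\sum_l q_l x^{k+l}\bigr)=\sum_l q_l\,\Op^*(x^{k+l})$. Pairing with an arbitrary $\fn$ and invoking \eqref{eq:diffmoments-linear-term} turns the left side into $\dmop_{q(x)\Op}(k,\shift)m_k$ and the right side into $\sum_l q_l\,\shift^l\,\dmop_\Op(k,\shift)m_k=q(\shift)\dmop_\Op(k,\shift)m_k$. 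This establishes equality of the two operators on the moment sequence of every $\fn$; to upgrade this to the formal operator identity one observes that, for each fixed $k$, suitable choices of $\fn$ (for instance $\fn=x^s$) make the vectors $(m_{k+r})_r$ linearly independent, forcing the coefficients to agree. The direct computation above sidesteps exactly this genericity argument, which is why I would adopt it as the proof of the formal statement.
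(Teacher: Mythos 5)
Your proof is correct and takes essentially the same route as the paper: what you do in your triple-sum computation (coefficient convolution of $q$ and $\coeff_j$ plus the commutation rule $\shift^l\,\ff{i+k}{j}=\ff{i+l+k}{j}\,\shift^l$) is exactly the content of the paper's Proposition \ref{prop:mult-poly-rec}, which it then applies coefficient-wise for each $j$ just as you do. Your supplementary adjoint argument is a nice conceptual check, but as you yourself note it only gives equality on moment sequences and needs a genericity upgrade, so the direct computation rightly remains the proof.
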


Equivalence of \eqref{eq:recurrence-entire-piecewise} and \eqref{eq:piecewise-moments-recurrence-2} then follows from Lemma \ref{lem:operator-multiplication-equivalence} by putting \f{q(x)=\prod_{i=1}^\np (x-\xi_i)^{\theorder}}.

To prove Lemma \ref{lem:operator-multiplication-equivalence} we need the following result.

\newcommand{\fj}{\ensuremath{\widehat{j}}} 
\begin{prop}\label{prop:mult-poly-rec}
 Let \f{p(x)=\sum_{i=0}^\alpha p_i x^i}, \f{q(x)=\sum_{i=0}^\beta q_i x^i} and \f{r(x)=p(x)q(x)=\sum_{i=0}^{\alpha+\beta} r_i x^i}. Let $\shift$ be the shift operator in $k$ and let $\fj$ be fixed. Then
\[
 \sum_{i=0}^{\alpha+\beta} r_i \ff{i+k}{\fj}\shift^{i-\fj} = p(\shift) \sum_{i=0}^\beta q_i \ff{i+k}{\fj} \shift^{i-\fj}
\]
\begin{proof}
 We extend the sequences of the coefficients \f{\{p_i\}} and \f{\{q_i\}} by zeros as necessary. By the rule of polynomial multiplication, \f{r_i=\sum_{j=0}^{\alpha} p_j q_{i-j}}. Now
\begin{align*}
 \sum_{i=0}^{\alpha+\beta} r_i \ff{i+k}{\fj}\shift^{i-\fj} &= \sum_{i=0}^{\alpha+\beta} \sum_{j=0}^{\alpha} p_j q_{i-j} \ff{i-j+k+j}{\fj}\shift^{i-j+\fj+j}
= \sum_{j=0}^{\alpha} p_j \shift^j \sum_{i=0}^{\alpha+\beta} q_{i-j} \ff{(i-j)+k}{\fj} \shift^{(i-j)+\fj}\\
\expln{i-j \rightarrow i} &= \sum_{j=0}^{\alpha} p_j \shift^j \sum_{i=-j}^{\alpha+\beta-j} q_i \ff{i+k}{\fj} \shift^{i-\fj} = p(\shift) \sum_{i=0}^\beta q_i \ff{i+k}{\fj} \shift^{i-\fj}
\qedhere
\end{align*}
\end{proof}
\end{prop}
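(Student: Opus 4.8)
The plan is to reduce the identity to a single structural fact about difference operators of the form $f(k)\shift^m$ — namely that $\shift$ does not commute with $k$-dependent coefficients, but does so in a completely controlled way — and then to let the Cauchy product formula for the coefficients of $r(x)=p(x)q(x)$ collapse the result. The point of the proposition is precisely that multiplying the differential operator by $p(x)$ on the left corresponds, on the recurrence side, to applying $p(\shift)$ on the left, and the whole content is the bookkeeping that makes this correspondence exact.

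First I would extend the finite coefficient sequences $\{p_i\}$ and $\{q_i\}$ by zeros, so that the rule of polynomial multiplication can be written with free summation ranges as $r_i=\sum_{j} p_j q_{i-j}$. Substituting this into the left-hand side turns it into the double sum $\sum_{i}\sum_{j} p_j q_{i-j}\,\ff{i+k}{\fj}\,\shift^{i-\fj}$, so that the task becomes extracting the factor $p(\shift)=\sum_{j} p_j \shift^{j}$ from the left.

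The \emph{key step} is the commutation identity. Because $\shift$ acts on the index $k$, for any function $f(k)$ one has $\shift^{j} f(k)=f(k+j)\,\shift^{j}$ as difference operators; applied to a single summand this gives the decomposition $\ff{i+k}{\fj}\,\shift^{i-\fj}=\shift^{j}\,\ff{(i-j)+k}{\fj}\,\shift^{(i-j)-\fj}$, since pushing the leading $\shift^{j}$ back to the right sends $(i-j)+k$ to $(i-j)+(k+j)=i+k$ and restores the exponent $i-\fj$. This is exactly the splitting $i+k=(i-j)+k+j$ of the factorial argument. Rewriting each summand this way lets me pull $p_j\shift^{j}$ out on the left, leaving $\sum_{j} p_j\,\shift^{j}\sum_{i} q_{i-j}\,\ff{(i-j)+k}{\fj}\,\shift^{(i-j)-\fj}$.

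Finally I would reindex the inner sum by $i-j\mapsto i$, which converts it into $\sum_{i} q_i\,\ff{i+k}{\fj}\,\shift^{i-\fj}$ — now independent of $j$ — so that summing $\sum_{j} p_j\shift^{j}=p(\shift)$ out front yields the right-hand side. The main obstacle is purely the accounting: one must check that shifting the summation index and simultaneously shifting the argument of the falling factorial by the same amount are consistent, i.e. that the noncommutativity of $\shift$ with the polynomial-in-$k$ coefficient $\ff{i+k}{\fj}$ is absorbed exactly once and no spurious shift of $k$ is introduced. Once that single commutation is verified, the rest is the Cauchy product together with the linearity of $p(\shift)$.
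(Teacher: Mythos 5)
Your proposal is correct and takes essentially the same route as the paper's proof: both substitute the Cauchy product $r_i=\sum_j p_j q_{i-j}$, split the summand via the commutation $\shift^j f(k)=f(k+j)\shift^j$ so that \f{\ff{i+k}{\fj}\shift^{i-\fj}=\shift^j \ff{(i-j)+k}{\fj}\shift^{(i-j)-\fj}}, pull $p_j\shift^j$ to the left, and reindex $i-j\mapsto i$, using the zero-extension of the coefficient sequences to restore the range $0\le i\le\beta$. The only difference is presentational: you state the commutation rule explicitly, whereas the paper performs it silently inside its chain of equalities (whose displayed $\shift$-exponents in fact contain sign typos, e.g.\ $\shift^{i-j+\fj+j}$ for $\shift^{(i-j)-\fj+j}$, which your version implicitly corrects).
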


\begin{proof}[Proof of Lemma \ref{lem:operator-multiplication-equivalence}]
\newcommand{\newpolycoeff}{\widehat{\polycoeff}}
Recall that \f{\dmop_{\Op}=\sum_{i,j} \polycoeff_{i,j} \gprc{i}{j}(k,\shift)} where \f{\gprc{i}{j}(k,\shift) = (-1)^j \ff{i+k}{j} \shift^{i-j}}, \f{\Op = \coeff_j(x) \partial^j} and \f{\coeff_j(x)=\sum_{i=0}^{\polymaxorder_j} \polycoeff_{i,j} x^i}. Now let \f{\newcoeff_j(x)=q(x)\coeff_j(x) = \sum \newpolycoeff_{i,j}x^i}, then
\begin{align*}
 \dmop_{q(x)\Op}(k,\shift) &= \sum_{j=0}^\theorder (-1)^j \sum_{i=0}^{\polymaxorder_j+\deg q} \newpolycoeff_{i,j} \ff{i+k}{j} \shift^{i-j}\\
\explntext{Proposition \ref{prop:mult-poly-rec}} &= \sum_{j=0}^\theorder (-1)^j q(\shift) \sum_{i=0}^{\polymaxorder_j} \polycoeff_{i,j} \ff{i+k}{j} \shift^{i-j}\\
&= q(\shift) \dmop_{\Op}(k,\shift) \qedhere
\end{align*}
\end{proof}

\subsubsection{Different operators on each interval}
Recall that we want to find a recurrence relation for the sequence \f{m_k=\sum m_{k,n}} where each subsequence is annihilated by the difference operator
\[
 \annop_n = \ggdiffop{\theorder_n}{\xi_n}{\xi_{n+1}} \cdot \dmop_{\Op_n}(k,\shift)
\]
There exist at least two approaches, both of which involve techniques from the theory of non-commutative polynomials -- the so called Ore polynomial rings (see \cite{ore1933tnc}). Both differential and difference operators with polynomial coefficients are members of the appropriate Ore algebra. The \emph{least common left multiple} (LCLM) of two polynomials \f{p,q} is the unique polynomial $r$ of minimal degree such that both $p$ and $q$ are right-hand factors of $r$, i.e. \f{r=p'p=q'q} for some polynomials \f{p',q'}. The LCLM may be explicitly found by the non-commutative version of the polynomial division algorithm. The complete theory may be found in \cite{ore1933tnc}.
\begin{enumerate}[\bfseries {Approach} I]
 \item Given the operators \f{\annop_n} such that \f{\annop_n m_{k,n} \equiv 0}, the operator \f{\annop^{\flat}} which annihilates the sum \f{\sum_n m_{k,n}} is given by the least common left multiple of \f{\{\annop_n\}}.
\item Given the operators \f{\Op_n} which annihilate the pieces \f{\fn_n} separately, the operator \f{\Op^{\dagger}} which annihilates every piece simultaneously is the least common left multiple of \f{\{\Op_n\}}. Then the annihilating operator for \f{\{m_k\}} is \f{\annop^{\dagger} = \ggdiffop{\theorder^{\dagger}}{a}{b} \dmop_{\Op^{\dagger}}(k,\shift)}.
\end{enumerate}

We are not aware of any general procedure by which the coefficients of \f{\annop^{\dagger}} or \f{\annop^{\flat}} may be related to the coefficients of each \f{\Op_n} in some tractable manner, unless the operators \f{\Op_n} commute.

\begin{example}[Piecewise sinusoids]\label{ex:piecewise-sin-annihilating-op}
 Let $\fn$ consist of two sinusoid pieces: \f{\fn_1(x)=c_1 \sin(\omega_1 x+\phi_1)} and \f{\fn_2(x)=c_2 \sin(\omega_2 x+\phi_2)} with break point $\xi$. The annihilating operators for $\fn_1$ and $\fn_2$ are, respectively, \f{\Op_1=\partial^2 + \omega_1^2 \id} and \f{\Op_2=\partial^2 + \omega_2^2 \id}. The operator \f{\Op^{\dagger}=(\partial^2+\omega_1^2 \id) \cdot (\partial^2+\omega_2^2 \id)} annihilates both pieces simultaneously, therefore \f{(x-\xi)^4 \Op^{\dagger}} annihilates the entire $\fn$.
\qed
\end{example}

\section{Moment inversion}\label{sec:inversion}
We now present our method for moment inversion for piecewise $D$-finite functions. Recall that our purpose is to reconstruct the parameters
\begin{equation}\label{eq:params-def}
\pars \isdef \bigl\{\{\polycoeff_{i,j}\},\{\xi_i\},\{\alpha_{i,n}\}\bigr\}
\end{equation}
from the input
\begin{equation}\label{eq:input-def}
\inp \isdef \bigl\{\{m_k\},\theorder, \{\polymaxorder_j\}, \np, a,b \bigr\}
\end{equation}
First we establish explicit connections between $\pars$ and $\inp$ -- the ``forward mapping'' \f{\fwm:\pars \to \inp}. Then we derive the inverse mapping \f{\ivm=\fwm^{-1}} and provide simple conditions for the solvability of the resulting inverse systems.

\subsection{Forward equations}\label{sec:forward-equations}
\newcommand{\polyder}[2][\fn]{\ensuremath{\mathfrak{g}^{#1}_{#2}}}

Recall the polynomials \f{\prc(k,\shift)} which were defined in \eqref{eq:mmm-def} during the derivation of the recurrence relation \eqref{eq:main-recurrence-explicit}. Given a multiindex \f{(i,j)} we define the ``shifted'' moment sequence
\begin{align}\label{eq:mmm-seq-def}
 \mmm &\isdef \left(\gdiffop(\shift) \cdot \prc(k,\shift) \right) m_k 
\end{align}

For each \f{j=0,\dotsc,\theorder} let \f{h_j(z)} be the formal power series
\begin{align*}
 h_j(z) &\isdef \sum_{k=0}^\infty \mm{0}{j}{k} z^k
\end{align*}

Finally, for any \f{\fn(x)} let
\begin{align*}
\polyder{j}(x) &\isdef \gdiffop(x) \frac{d^j}{dx^j}\fn(x)\\
I_{\fn}(z) &\isdef \sum_{k=0}^\infty m_k(\fn) z^k
\end{align*}
\f{I_{\fn}(z)} is called the \emph{moment-generating function} of $\fn$.

\begin{thm}\label{thm:operator-coeffs-main-thm}
Let \f{\fn \in \rpaclass} be annihilated by $\Op$ (either in the usual sense if \f{\np=0} or as a distribution if \f{\np >0}, in which case \f{\Op = \prod_{i=1}^{\np} (x-\xi_i)^{\theorder} \Op^{\dagger}} with $\Op^{\dagger}$ annihilating every piece) , where \f{\Op = \sum_{j=0}^\theorder \coeff_j(x) \partial^j} and \f{\coeff_j(x)=\sum_{i=0}^{\polymaxorder_j} \polycoeff_{i,j} x^i}. Then
\begin{enumerate}[\bfseries (A)]
\item\label{part:systemH} The vector \f{\vec{\polycoeff}=(\polycoeff_{i,j})} satisfies a linear homogeneous system
\begin{align}\label{eq:systemH}
H \vec{\polycoeff} = \begin{pmatrix}
\mm{0}{0}{0} & \mm{1}{0}{0} & \dots & \mm{\polymaxorder_{\theorder}}{\theorder}{0} \\
\mm{0}{0}{1} & \mm{1}{0}{1} & \dots & \mm{\polymaxorder_{\theorder}}{\theorder}{1} \\
\vdots & \vdots & \vdots & \vdots \\
\mm{0}{0}{\nmH} & \mm{1}{0}{\nmH} & \dots & \mm{\polymaxorder_{\theorder}}{\theorder}{\nmH} \\
\end{pmatrix} \begin{pmatrix}
\polycoeff_{0,0} \\
\polycoeff_{1,0} \\
\vdots \\
\polycoeff_{\polymaxorder_{\theorder},\theorder} \\
\end{pmatrix}= 0
\end{align}
for all \f{\nmH \in \naturals}.
\item\label{part:mgf} \f{\mmm=m_{i+k}\left( \polyder{j}(x) \right)} for all \f{0 \leq j \leq \theorder}, \f{0 \leq i \leq \polymaxorder_j} and \f{k \in \naturals} (the moments are taken in \f{[a,b]}). Consequently, \f{h_j(z)} is the moment generating function of \f{\polyder{j}(x)}.
\item\label{part:hermite-pade} The functions \f{\{1,h_0(z), \dotsc h_{\theorder}(z)\}} are polynomially dependent:
\begin{align*}
 q(z) + \sum_{j=0}^{\theorder} h_j(z) p^*_j(z) &= 0
\end{align*}
where \f{p^*_j(z)=z^{\max \polymaxorder_j} \coeff_j(z^{-1})} and \f{q(z)} is a polynomial with \f{\deg q < \max \polymaxorder_j}.
\end{enumerate}
\end{thm}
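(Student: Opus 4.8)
The plan is to obtain (A) as a direct re-reading of Theorem~\ref{thm:main-recurrence-theorem}, to identify the recurrence coefficients $\mmm$ as genuine moments in (B), and to repackage the same recurrence as one functional identity between generating functions in (C). For (A) I would start from $\annop m_k = 0$ and expand $\annop = \gdiffop(\shift)\cdot\dmop_{\Op}(k,\shift) = \gdiffop(\shift)\sum_{i,j}\polycoeff_{i,j}\prc(k,\shift)$. Each $\polycoeff_{i,j}$ is a real constant, hence commutes with the constant-coefficient difference operator $\gdiffop(\shift)$, so $\annop m_k = \sum_{i,j}\polycoeff_{i,j}\bigl(\gdiffop(\shift)\prc(k,\shift)\bigr)m_k = \sum_{i,j}\polycoeff_{i,j}\mmm$ by the definition \eqref{eq:mmm-seq-def}. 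Theorem~\ref{thm:main-recurrence-theorem} gives $\sum_{i,j}\polycoeff_{i,j}\mmm = 0$ for every $k$; reading one scalar equation per $k=0,1,\dots,\nmH$ and ordering the unknowns as $\polycoeff_{0,0},\polycoeff_{1,0},\dots$ is exactly the system \eqref{eq:systemH}, with $\mm{i}{j}{k}$ in the $(k,(i,j))$ entry and arbitrary row count $\nmH$.

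For (B) the target is $\mmm = m_{i+k}(\polyder{j})$, and I would work one monomial operator $x^i\partial^j$ at a time. From \eqref{eq:diffmoments-linear-term}, $\prc(k,\shift)m_k = (-1)^j\ff{i+k}{j}m_{i+k-j}(\fn) = \goodpart_k(x^i\partial^j,\fn)$, so Green's formula \eqref{eq:green} for this operator gives $\prc(k,\shift)m_k = m_k(x^i\partial^j\fn) - \badpart_k(x^i\partial^j,\fn)$, where $m_k(x^i\partial^j\fn) = \int_a^b x^{i+k}\partial^j\fn\,dx = m_{i+k}(\partial^j\fn)$. By Lemma~\ref{lem:concomitant-structure} the concomitant $\badpart_k(x^i\partial^j,\fn)$ has the form $b^k q_b(k)-a^k q_a(k)$ with $\deg q_a,q_b\le\theorder-1$, so Theorem~\ref{thm:main-differences-generalized} shows that $\gdiffop(\shift)$ annihilates it. Applying $\gdiffop(\shift)$ therefore leaves $\mmm = \gdiffop(\shift)m_{i+k}(\partial^j\fn)$, and the elementary rule $(\shift-c\id)m_{i+k}(\phi)=m_{i+k}\bigl((x-c)\phi\bigr)$, iterated over the four factors of $\gdiffop$, yields $\gdiffop(\shift)m_{i+k}(\partial^j\fn)=m_{i+k}\bigl(\gdiffop(x)\partial^j\fn\bigr)=m_{i+k}(\polyder{j})$. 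Setting $i=0$ gives $\mm{0}{j}{k}=m_k(\polyder{j})$ and hence $h_j(z)=\sum_k m_k(\polyder{j})z^k=I_{\polyder{j}}(z)$, which is the second assertion.

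For (C) I would feed the scalar recurrence $\sum_{i,j}\polycoeff_{i,j}m_{i+k}(\polyder{j})=0$ (from Theorem~\ref{thm:main-recurrence-theorem} together with (B)) into generating functions: multiply by $z^k$, sum over $k\ge0$, and use $\sum_{k\ge0}m_{i+k}(\polyder{j})z^k=z^{-i}\bigl(h_j(z)-\sum_{l=0}^{i-1}m_l(\polyder{j})z^l\bigr)$. The leading terms collect into $\sum_j h_j(z)\sum_i\polycoeff_{i,j}z^{-i}=\sum_j h_j(z)\coeff_j(z^{-1})$, while the finitely many subtracted initial moments form a tail supported on negative powers of $z$. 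Writing $M=\max_j\polymaxorder_j$ and multiplying the whole identity by $z^M$ turns $z^M\coeff_j(z^{-1})$ into the polynomial $p^*_j(z)$; because $i\le\polymaxorder_j\le M$ and $0\le l\le i-1$, the tail becomes a genuine polynomial $q(z)$ supported on exponents $M+l-i\in\{0,\dots,M-1\}$, i.e. $\deg q<M$. This is precisely $q(z)+\sum_{j=0}^{\theorder}h_j(z)p^*_j(z)=0$.

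The main obstacle is (B), where two points need care. First, the concomitant of the single monomial $x^i\partial^j$ must be shown to have the exponential-times-polynomial shape of Lemma~\ref{lem:concomitant-structure} with degree below $\theorder$, so that the \emph{full} operator $\gdiffop(\shift)$ (not merely some lower power) annihilates it; this forces me to invoke Lemma~\ref{lem:concomitant-structure} and Theorem~\ref{thm:main-differences-generalized} term by term rather than for $\Op$ globally. Second, when $\np>0$ everything must be read distributionally: $\Op$ is the enlarged operator $\newop$, $\fn$ is the distribution \eqref{eq:piecewise-full-def}, and $\partial^j\fn$ together with all moments are the pairings $\langle\cdot,x^k\rangle$. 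Here I would lean on Theorem~\ref{thm:piecewise-annihilating-operator} to guarantee that the interior jump contributions are absorbed by the factor $\prod(x-\xi_i)^\theorder$, leaving only endpoint concomitants at $a,b$, after which the algebra of (A)--(C) is formally identical. The exponent bookkeeping in (C) is then routine once the constraints $i\le\polymaxorder_j\le M$ are tracked.
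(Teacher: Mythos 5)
Your proposal is correct and follows essentially the same route as the paper's own proof: part (A) by expanding the recurrence of Theorems \ref{thm:main-recurrence-theorem}/\ref{thm:piecewise-recurrence} term by term into $\sum_{i,j}\polycoeff_{i,j}\mmm=0$, part (B) by applying the Green's-formula decomposition to each monomial operator $x^i\partial^j$, killing its concomitant with $\gdiffop(\shift)$ via Lemma \ref{lem:concomitant-structure} and Theorem \ref{thm:main-differences-generalized}, and then converting shifts into multiplication by $\diffop(x)$ (the paper packages this last step as Proposition \ref{prop:application-of-polynomial-to-moments}, which you re-derive inline), and part (C) by the same degree bookkeeping that the paper performs coefficientwise on $q(z)=-\sum_j h_j(z)p^*_j(z)$, merely run in the generating-function direction rather than by extracting coefficients. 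The only slip is immaterial: $\gdiffop(\shift)$ has $2\theorder$ linear factors, not four, but your iterated shift rule applies to each factor verbatim.
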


\begin{proof}[Proof of \ref{part:systemH}]
 By Theorem \ref{thm:piecewise-recurrence} and \eqref{eq:mmm-seq-def}we have
\[
 \sum_{j=0}^\theorder \sum_{i=0}^{\polymaxorder_j} \polycoeff_{i,j} \mmm = 0, \quad k=0,1,\dotsc
\]
This is exactly \eqref{eq:systemH}.
\end{proof}

\begin{prop}\label{prop:application-of-polynomial-to-moments}
 Let $p(x)$ be a polynomial in $x$. Then for every $f(x)$
\[
 m_k\bigl(p(x) f(x) \bigr) = p(\shift) m_k(f(x))
\]
\begin{proof}
 Let \f{p(x) = x^r}. Then
\[
 m_k\bigl(x^r f(x)\bigr) = \int_a^b x^k x^r f(x) dx = \int_a^b x^{k+r} f(x)dx = m_{k+r} (f(x)) = \shift^r m_k(f(x))
\]
The proof for an arbitrary polynomial \f{p(x)} follows by linearity.
\end{proof}
\end{prop}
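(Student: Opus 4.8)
The plan is to prove the identity first for a single monomial and then extend to an arbitrary polynomial by linearity. First I would record the two facts that drive the computation: the definition of the moment transform, $m_k(f)=\int_a^b x^k f(x)\,dx$, and the action of the shift operator $\shift$ on a sequence indexed by $k$, namely $\shift^r m_k(f)=m_{k+r}(f)$ for every nonnegative integer $r$.

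For the monomial case $p(x)=x^r$ the computation is a one-liner: pull the factor $x^r$ into the integrand, combine it with $x^k$ via $x^k\cdot x^r=x^{k+r}$, and recognize the resulting integral as a moment of $f$ of higher index. Concretely, $m_k(x^r f(x))=\int_a^b x^{k+r} f(x)\,dx=m_{k+r}(f)=\shift^r m_k(f)$, which is exactly $p(\shift)\,m_k(f)$ when $p(x)=x^r$.

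To finish, I would write a general polynomial as $p(x)=\sum_r c_r x^r$ and invoke linearity twice: integration is linear, so $m_k$ is linear in its argument, and the assignment $p\mapsto p(\shift)$ is linear in the coefficients of $p$. Summing the monomial identity weighted by the $c_r$ then gives $m_k(p(x)f(x))=\sum_r c_r\, m_{k+r}(f)=\sum_r c_r\,\shift^r m_k(f)=p(\shift)\,m_k(f)$, which is the claim.

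There is essentially no obstacle in this proposition; the only point worth flagging is the bookkeeping of what $p(\shift)$ denotes, namely the polynomial $p$ evaluated at the shift operator acting on the sequence $k\mapsto m_k(f)$, so that the index being shifted is $k$ and not any variable internal to $f$. Once this is made explicit, the identity is an immediate consequence of the integral representation of the moments, with the polynomial coefficients of $p$ becoming, verbatim, the coefficients of $p(\shift)$.
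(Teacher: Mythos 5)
Your proof is correct and follows exactly the paper's route: verify the identity for a monomial $p(x)=x^r$ by absorbing $x^r$ into the integrand, then extend to general polynomials by linearity of $m_k$ and of $p\mapsto p(\shift)$. Your extra remark about $p(\shift)$ shifting the index $k$ is a sound clarification but adds nothing beyond the paper's argument.
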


\begin{proof}[Proof of \ref{part:mgf}]
 Fix \f{j \leq \theorder}, \f{i \leq \max \polymaxorder_j} and define \f{\Op_{ij} = x^i \partial^j}. By \eqref{eq:diffmoments-defs} and \eqref{eq:diffmoments-linear-term} we have
\begin{align}\label{eq:partBproof-decomposition}
 m_k(\Op_{ij} \fn) = \dmop_{\Op_{ij}}(k,\shift)m_k(\fn) + \badpart_k(\Op_{ij},\fn)
\end{align}
By \eqref{eq:mmm-def} we have: \f{\dmop_{\Op_{ij}}(k,\shift)=\gprc{i}{j}(k,\shift)}. By Theorem \ref{thm:main-differences-generalized} we have \f{\gdiffop(\shift) \badpart_k(\Op_{ij},\fn)=0}. Finally,
\begin{align*}
 \mmm &= \bigl(\gdiffop(\shift) \cdot \prc(k,\shift)\bigr)m_k(\fn)
= \gdiffop(\shift) \dmop_{\Op_{ij}}(k,\shift)m_k\\
\expln{\ref{eq:partBproof-decomposition}} &= \gdiffop(\shift) m_k(\Op_{ij}\fn) - \gdiffop(\shift) \badpart_k(\Op_{ij},\fn)
= \gdiffop(\shift) m_k(\Op_{ij}\fn)\\
\explntext{Proposition \ref{prop:application-of-polynomial-to-moments}} &= m_k(\diffop(x) x^i \partial^j \fn) = m_{i+k}(\polyder{j}(x))
\qedhere 
\end{align*}
\end{proof}

\begin{proof}[Proof of \ref{part:hermite-pade}]
\newcommand{\themax}{\ensuremath{\polymaxorder^*}}
 Let \f{\themax = \max \polymaxorder_j}. We have \f{p^*_j(z)=\sum_{i=0}^{\polymaxorder_j} a_{i,j}z^{\themax-i}}. Denote the power series
\[
        q(z)=-\sum_{j=0}^\theorder h_j(z) p^*_j(z)=\sum_{k=0}^\infty q_k z^k
\]
An immediate consequence of \ref{part:mgf} is that \f{\mm{i'}{j}{k'}=\mm{i}{j}{k}} for \f{i'+k'=i+k} and all $j$. Then for all \f{m \geq \themax} 
\[
 -q_{m} = \sum_{j=0}^\theorder \sum_{i=0}^{\polymaxorder_j} \polycoeff_{i,j} \mm{0}{j}{m-\themax+i}= \sum_{j=0}^\theorder \sum_{i=0}^{\polymaxorder_j} \polycoeff_{i,j} \mm{i}{j}{m-\themax} = 0
\]
So \f{q(z)} is a polynomial of degree at most \f{\themax-1}. This completes the proof of Theorem \ref{thm:operator-coeffs-main-thm}.
\end{proof}

\begin{rem}\label{rem:hankel-striped}
$H$ has the structure of \emph{Hankel-striped} matrix \f{H = [V_0 \dotsc V_{\theorder}]} where each ``stripe'' is a Hankel matrix
\begin{equation}\label{eq:stripe-def}
 V_j = \begin{bmatrix}
\mm{0}{j}{0} & \mm{1}{j}{0} & \dots & \mm{\polymaxorder_{j}}{j}{0} \\
\mm{0}{j}{1} & \mm{1}{j}{1} & \dots & \mm{\polymaxorder_{j}}{j}{1} \\
\vdots & \vdots & \vdots & \vdots \\
\mm{0}{j}{\nmH} & \mm{1}{j}{\nmH} & \dots & \mm{\polymaxorder_{j}}{j}{\nmH} \\
\end{bmatrix}
\end{equation}

Hankel-striped matrices appear as central objects in contexts such as \emph{Hermite-Pad\'{e}} approximation (the standard Pad\'{e} approximation being its special case), minimal realization problem in control theory and Reed-Solomon codes (\cite{labahn1992icb,kalman:mpr,Byrnes83onthe}). In fact, the system of polynomials \f{\{p^*_j(z)\}} is called the Pad\'{e}-Hermite form for \f{\Phi=\{1,h_0(z),\dotsc,h_{\theorder}(z)\}}.
\end{rem}

\begin{rem}
Moment-generating functions are a powerful tool for the investigation of the properties of the sequence $m_k$. For instance, the asymptotic behavior of the general term may be derived from the analytic properties of $I_{\fn}(z)$ (\cite{flajolet2005ac}).
\end{rem}

Now suppose the operator \f{\Op} annihilating every piece $\fn_n$ of \f{\fn \in \rpaclass}, as well as the jump points \f{\{\xi_n\}}, are known. Let \f{\{\bas_i\}_{i=1}^\theorder} be a basis for the space \f{\nullsp{\Op}}. Then \f{\fn_n(x)=\sum_{i=1}^\theorder \alpha_{in} \bas_i(x)}. Applying the moment transform to both sides of the last equation and summing over \f{n=0,\dotsc,\np} gives

\begin{prop}\label{prop:particular-solution}
Denote \f{c_{i,k}^n=\int_{\xi_n}^{\xi_{n+1}} x^k u_i(x)} for \f{n=0,\dotsc,\np}. Then \f{\forall \nmC \in \naturals}:
\begin{align}\label{eq:particular-solution}
\begin{pmatrix}
c_{1,0}^0 & \dotsc & c_{\theorder,0}^0 & \dotsc & c_{\theorder,0}^\np\\
\vdots & \vdots & \vdots & \vdots  & \vdots\\
c_{1,\nmC}^0 & \dotsc & c_{\theorder,\nmC}^0 & \dotsc & c_{\theorder,\nmC}^\np
\end{pmatrix}
\begin{pmatrix}
 \alpha_{1,0}\\ \vdots \\ \alpha_{\theorder,0}\\
\vdots \\ \alpha_{\theorder,\np}
\end{pmatrix}
=
\begin{pmatrix}
 m_0\\ m_1\\ \vdots\\ m_{\nmC}
\end{pmatrix}
\qed
\end{align}
\end{prop}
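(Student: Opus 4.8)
The plan is to read off the linear system directly from the representation $\fn_n(x)=\sum_{i=1}^\theorder \alpha_{i,n}\bas_i(x)$ by applying the moment functional to each piece and then summing over the pieces. The key observation is that the moment transform is linear, so there is essentially no obstacle here beyond careful bookkeeping of indices.

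First I would fix an integer $\nmC \in \naturals$ and a moment index $k$ with $0 \leq k \leq \nmC$. Since $\fn_n(x)=\sum_{i=1}^\theorder \alpha_{i,n}\bas_i(x)$ holds on the interval $\Delta_n=[\xi_n,\xi_{n+1}]$, I would integrate against $x^k$ over that interval and use linearity of the integral to obtain
\[
 m_{k,n} = \int_{\xi_n}^{\xi_{n+1}} x^k \fn_n(x)\,dx = \sum_{i=1}^\theorder \alpha_{i,n}\int_{\xi_n}^{\xi_{n+1}} x^k \bas_i(x)\,dx = \sum_{i=1}^\theorder \alpha_{i,n}\, c_{i,k}^n,
\]
where $c_{i,k}^n$ is exactly the quantity defined in the proposition. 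Next I would sum this identity over $n=0,\dotsc,\np$. Since $m_k = \sum_{n=0}^\np m_{k,n}$ by the definition of the piecewise moments, this yields
\[
 m_k = \sum_{n=0}^\np \sum_{i=1}^\theorder \alpha_{i,n}\, c_{i,k}^n.
\]

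Finally I would recognize the double sum as a single inner product between a row vector of the $c_{i,k}^n$ (ordered first by the basis index $i=1,\dotsc,\theorder$ and then by the piece index $n=0,\dotsc,\np$, matching the column ordering in \eqref{eq:particular-solution}) and the stacked coefficient vector $\alpha=(\alpha_{1,0},\dotsc,\alpha_{\theorder,0},\dotsc,\alpha_{\theorder,\np})^{\mathsf T}$. Writing this identity for every $k=0,1,\dotsc,\nmC$ stacks these scalar equations into the matrix equation $C\alpha = \vec{m}$ displayed in the statement, where the $(k+1)$-st row of $C$ consists of the entries $c_{i,k}^n$ and the $(k+1)$-st entry of $\vec{m}$ is $m_k$. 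Since $\nmC$ was arbitrary, this holds for all $\nmC \in \naturals$, completing the proof.

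I do not anticipate a genuine obstacle: the argument is a direct consequence of the linearity of integration together with the additivity $m_k=\sum_n m_{k,n}$. The only point requiring mild care is ensuring that the indexing convention for the columns of $C$ (lexicographic in $(n,i)$ with $i$ varying fastest) agrees with the ordering chosen for the unknown vector $\alpha$, so that the assembled matrix coincides with the one written in \eqref{eq:particular-solution}.
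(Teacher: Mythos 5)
Your proof is correct and is exactly the argument the paper uses: the paper's (one-line) justification, given immediately before the proposition, is to apply the moment transform to both sides of $\fn_n(x)=\sum_{i=1}^\theorder \alpha_{i,n}\bas_i(x)$ and sum over $n=0,\dotsc,\np$, which is precisely your linearity-plus-additivity computation. You have simply spelled out the bookkeeping that the paper leaves implicit.
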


\newcommand{\recOp}{\ensuremath{\widetilde{\Op}}}
\newcommand{\qjumps}{\f{\np>0}?}
\newcommand{\theinput}{Input: \f{\{m_k\}_{k=0}^\nm}, \f{\theorder, \{\polymaxorder_j\}}, \np}
\newcommand{\substituteone}{\f{\polymaxorder_j \leftarrow \polymaxorder_j+\theorder\np}}
\newcommand{\substitutetwo}{\f{\bigl(\Op \leftarrow \prod_{i=1}^\np (x-\xi_i)^\theorder \Op\bigr)}}
\newcommand{\oprecone}{\underline{Reconstruct \f{\Op}}}
\newcommand{\oprectwo}{Solve \f{H\vec{a}=0 \quad\eqref{eq:systemH}}}
\newcommand{\oprecthree}{\f{\recOp = \Op_{\vec{a}}}}
\newcommand{\jumprecone}{\underline{Recover jump points}}
\newcommand{\jumprectwo}{\f{\recOp \rightarrow \f{\{\xi_n\}, \Op^{\dagger}}}}
\newcommand{\solrecone}{\underline{Recover particular solution(s)}}
\newcommand{\solrectwo}{Find basis for \f{\nullsp{\Op^{\dagger}}}}
\newcommand{\solrecthree}{Solve \f{C \vec{\alpha} = \vec{m}} \eqref{eq:particular-solution}}
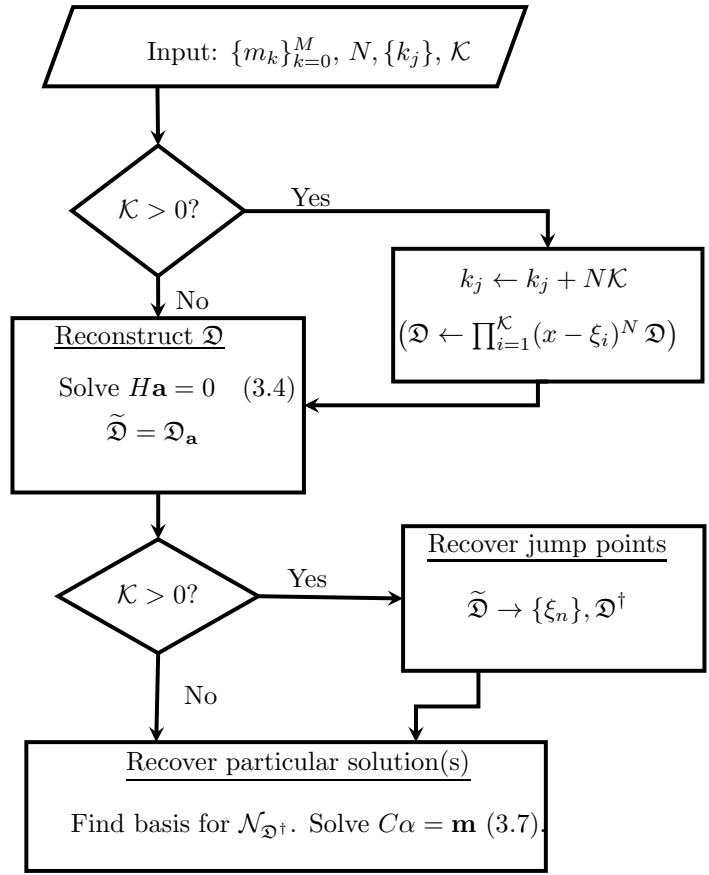
\begin{figure}[h]
\centering
\ifx\du\undefined
  \newlength{\du}
\fi
\setlength{\du}{15\unitlength}
\begin{tikzpicture}
\pgftransformxscale{1.000000}
\pgftransformyscale{-1.000000}
\definecolor{dialinecolor}{rgb}{0.000000, 0.000000, 0.000000}
\pgfsetstrokecolor{dialinecolor}
\definecolor{dialinecolor}{rgb}{1.000000, 1.000000, 1.000000}
\pgfsetfillcolor{dialinecolor}
\definecolor{dialinecolor}{rgb}{1.000000, 1.000000, 1.000000}
\pgfsetfillcolor{dialinecolor}
\fill (5.327940\du,0.350000\du)--(16.750000\du,0.350000\du)--(16.022060\du,2.350000\du)--(4.600000\du,2.350000\du)--cycle;
\pgfsetlinewidth{0.100000\du}
\pgfsetdash{}{0pt}
\pgfsetdash{}{0pt}
\pgfsetmiterjoin
\definecolor{dialinecolor}{rgb}{0.000000, 0.000000, 0.000000}
\pgfsetstrokecolor{dialinecolor}
\draw (5.327940\du,0.350000\du)--(16.750000\du,0.350000\du)--(16.022060\du,2.350000\du)--(4.600000\du,2.350000\du)--cycle;
\definecolor{dialinecolor}{rgb}{0.000000, 0.000000, 0.000000}
\pgfsetstrokecolor{dialinecolor}
\node[anchor=west] at (6.920000\du,1.492500\du){\theinput};
\definecolor{dialinecolor}{rgb}{1.000000, 1.000000, 1.000000}
\pgfsetfillcolor{dialinecolor}
\fill (7.471815\du,3.842520\du)--(9.645520\du,5.490103\du)--(7.471815\du,7.137687\du)--(5.298110\du,5.490103\du)--cycle;
\pgfsetlinewidth{0.100000\du}
\pgfsetdash{}{0pt}
\pgfsetdash{}{0pt}
\pgfsetmiterjoin
\definecolor{dialinecolor}{rgb}{0.000000, 0.000000, 0.000000}
\pgfsetstrokecolor{dialinecolor}
\draw (7.471815\du,3.842520\du)--(9.645520\du,5.490103\du)--(7.471815\du,7.137687\du)--(5.298110\du,5.490103\du)--cycle;
\definecolor{dialinecolor}{rgb}{0.000000, 0.000000, 0.000000}
\pgfsetstrokecolor{dialinecolor}
\node[anchor=west] at (6.2\du,5.45\du){\qjumps};
\pgfsetlinewidth{0.100000\du}
\pgfsetdash{}{0pt}
\pgfsetdash{}{0pt}
\pgfsetbuttcap
{
\definecolor{dialinecolor}{rgb}{0.000000, 0.000000, 0.000000}
\pgfsetfillcolor{dialinecolor}
\pgfsetarrowsend{stealth}
\definecolor{dialinecolor}{rgb}{0.000000, 0.000000, 0.000000}
\pgfsetstrokecolor{dialinecolor}
\draw (7.455510\du,2.350000\du)--(7.471815\du,3.842520\du);
}
\definecolor{dialinecolor}{rgb}{1.000000, 1.000000, 1.000000}
\pgfsetfillcolor{dialinecolor}
\fill (13.400000\du,6.450000\du)--(13.400000\du,9.800000\du)--(21.200000\du,9.800000\du)--(21.200000\du,6.450000\du)--cycle;
\pgfsetlinewidth{0.100000\du}
\pgfsetdash{}{0pt}
\pgfsetdash{}{0pt}
\pgfsetmiterjoin
\definecolor{dialinecolor}{rgb}{0.000000, 0.000000, 0.000000}
\pgfsetstrokecolor{dialinecolor}
\draw (13.400000\du,6.450000\du)--(13.400000\du,9.800000\du)--(21.200000\du,9.800000\du)--(21.200000\du,6.450000\du)--cycle;
\definecolor{dialinecolor}{rgb}{0.000000, 0.000000, 0.000000}
\pgfsetstrokecolor{dialinecolor}
\node[anchor=west] at (14.805000\du,7.267500\du){\substituteone};
\node[anchor=west] at (13.2\du,8.6\du){\substitutetwo};

\definecolor{dialinecolor}{rgb}{0.000000, 0.000000, 0.000000}
\pgfsetstrokecolor{dialinecolor}
\node[anchor=west] at (10.550000\du,5.150000\du){Yes};
\definecolor{dialinecolor}{rgb}{1.000000, 1.000000, 1.000000}
\pgfsetfillcolor{dialinecolor}
\fill (3.800000\du,8.200000\du)--(3.800000\du,12.600000\du)--(11.150000\du,12.600000\du)--(11.150000\du,8.200000\du)--cycle;
\pgfsetlinewidth{0.100000\du}
\pgfsetdash{}{0pt}
\pgfsetdash{}{0pt}
\pgfsetmiterjoin
\definecolor{dialinecolor}{rgb}{0.000000, 0.000000, 0.000000}
\pgfsetstrokecolor{dialinecolor}
\draw (3.800000\du,8.200000\du)--(3.800000\du,12.600000\du)--(11.150000\du,12.600000\du)--(11.150000\du,8.200000\du)--cycle;
\definecolor{dialinecolor}{rgb}{0.000000, 0.000000, 0.000000}
\pgfsetstrokecolor{dialinecolor}
\node[anchor=west] at (4.6\du,8.7\du){\oprecone};
\node[anchor=west] at (4.7\du,10.0\du){\oprectwo};
\node[anchor=west] at (5.9\du,11.0\du){\oprecthree};
\pgfsetlinewidth{0.100000\du}
\pgfsetdash{}{0pt}
\pgfsetdash{}{0pt}
\pgfsetbuttcap
{
\definecolor{dialinecolor}{rgb}{0.000000, 0.000000, 0.000000}
\pgfsetfillcolor{dialinecolor}
\pgfsetarrowsend{stealth}
\definecolor{dialinecolor}{rgb}{0.000000, 0.000000, 0.000000}
\pgfsetstrokecolor{dialinecolor}
\draw (7.471815\du,7.137687\du)--(7.475000\du,8.200000\du);
}
\pgfsetlinewidth{0.100000\du}
\pgfsetdash{}{0pt}
\pgfsetdash{}{0pt}
\pgfsetmiterjoin
\pgfsetbuttcap
{
\definecolor{dialinecolor}{rgb}{0.000000, 0.000000, 0.000000}
\pgfsetfillcolor{dialinecolor}
\pgfsetarrowsend{stealth}
{\pgfsetcornersarced{\pgfpoint{0.000000\du}{0.000000\du}}\definecolor{dialinecolor}{rgb}{0.000000, 0.000000, 0.000000}
\pgfsetstrokecolor{dialinecolor}
\draw (9.645520\du,5.490103\du)--(9.645520\du,5.500000\du)--(17.300000\du,5.500000\du)--(17.300000\du,6.450000\du);
}}
\pgfsetlinewidth{0.100000\du}
\pgfsetdash{}{0pt}
\pgfsetdash{}{0pt}
\pgfsetmiterjoin
\pgfsetbuttcap
{
\definecolor{dialinecolor}{rgb}{0.000000, 0.000000, 0.000000}
\pgfsetfillcolor{dialinecolor}
\pgfsetarrowsend{stealth}
{\pgfsetcornersarced{\pgfpoint{0.000000\du}{0.000000\du}}\definecolor{dialinecolor}{rgb}{0.000000, 0.000000, 0.000000}
\pgfsetstrokecolor{dialinecolor}
\draw (17.300000\du,9.800000\du)--(17.050000\du,9.800000\du)--(17.050000\du,10.400000\du)--(11.150000\du,10.400000\du);
}}
\definecolor{dialinecolor}{rgb}{0.000000, 0.000000, 0.000000}
\pgfsetstrokecolor{dialinecolor}
\node[anchor=west] at (7.650000\du,7.750000\du){No};
\definecolor{dialinecolor}{rgb}{1.000000, 1.000000, 1.000000}
\pgfsetfillcolor{dialinecolor}
\fill (7.474372\du,13.780000\du)--(9.998745\du,15.217795\du)--(7.474372\du,16.655589\du)--(4.950000\du,15.217795\du)--cycle;
\pgfsetlinewidth{0.100000\du}
\pgfsetdash{}{0pt}
\pgfsetdash{}{0pt}
\pgfsetmiterjoin
\definecolor{dialinecolor}{rgb}{0.000000, 0.000000, 0.000000}
\pgfsetstrokecolor{dialinecolor}
\draw (7.474372\du,13.780000\du)--(9.998745\du,15.217795\du)--(7.474372\du,16.655589\du)--(4.950000\du,15.217795\du)--cycle;
\definecolor{dialinecolor}{rgb}{0.000000, 0.000000, 0.000000}
\pgfsetstrokecolor{dialinecolor}
\node[anchor=west] at (6.164372\du,15.160295\du){\qjumps};
\pgfsetlinewidth{0.100000\du}
\pgfsetdash{}{0pt}
\pgfsetdash{}{0pt}
\pgfsetbuttcap
{
\definecolor{dialinecolor}{rgb}{0.000000, 0.000000, 0.000000}
\pgfsetfillcolor{dialinecolor}
\pgfsetarrowsend{stealth}
\definecolor{dialinecolor}{rgb}{0.000000, 0.000000, 0.000000}
\pgfsetstrokecolor{dialinecolor}
\draw (7.475000\du,12.600000\du)--(7.474372\du,13.780000\du);
}
\definecolor{dialinecolor}{rgb}{1.000000, 1.000000, 1.000000}
\pgfsetfillcolor{dialinecolor}
\fill (13.650000\du,13.450000\du)--(13.650000\du,17.100000\du)--(21.250000\du,17.100000\du)--(21.250000\du,13.450000\du)--cycle;
\pgfsetlinewidth{0.100000\du}
\pgfsetdash{}{0pt}
\pgfsetdash{}{0pt}
\pgfsetmiterjoin
\definecolor{dialinecolor}{rgb}{0.000000, 0.000000, 0.000000}
\pgfsetstrokecolor{dialinecolor}
\draw (13.650000\du,13.450000\du)--(13.650000\du,17.100000\du)--(21.250000\du,17.100000\du)--(21.250000\du,13.450000\du)--cycle;
\definecolor{dialinecolor}{rgb}{0.000000, 0.000000, 0.000000}
\pgfsetstrokecolor{dialinecolor}
\node[anchor=west] at (14.0\du,14.0\du){\jumprecone};
\node[anchor=west] at (15.0\du,15.5\du){\jumprectwo};
\definecolor{dialinecolor}{rgb}{1.000000, 1.000000, 1.000000}
\pgfsetfillcolor{dialinecolor}
\fill (4.150000\du,18.900000\du)--(4.150000\du,22.150000\du)--(17.250000\du,22.150000\du)--(17.250000\du,18.900000\du)--cycle;
\pgfsetlinewidth{0.100000\du}
\pgfsetdash{}{0pt}
\pgfsetdash{}{0pt}
\pgfsetmiterjoin
\definecolor{dialinecolor}{rgb}{0.000000, 0.000000, 0.000000}
\pgfsetstrokecolor{dialinecolor}
\draw (4.150000\du,18.900000\du)--(4.150000\du,22.150000\du)--(17.250000\du,22.150000\du)--(17.250000\du,18.900000\du)--cycle;
\definecolor{dialinecolor}{rgb}{0.000000, 0.000000, 0.000000}
\pgfsetstrokecolor{dialinecolor}
\node[anchor=west] at (6.393750\du,19.5\du){\solrecone};
\node[anchor=west] at (5.0\du,21.0\du){\solrectwo. \solrecthree.};
\pgfsetlinewidth{0.100000\du}
\pgfsetdash{}{0pt}
\pgfsetdash{}{0pt}
\pgfsetbuttcap
{
\definecolor{dialinecolor}{rgb}{0.000000, 0.000000, 0.000000}
\pgfsetfillcolor{dialinecolor}
\pgfsetarrowsend{stealth}
\definecolor{dialinecolor}{rgb}{0.000000, 0.000000, 0.000000}
\pgfsetstrokecolor{dialinecolor}
\draw (7.474372\du,16.655589\du)--(7.425000\du,18.900000\du);
}
\definecolor{dialinecolor}{rgb}{0.000000, 0.000000, 0.000000}
\pgfsetstrokecolor{dialinecolor}
\node[anchor=west] at (10.460000\du,14.757500\du){Yes};
\definecolor{dialinecolor}{rgb}{0.000000, 0.000000, 0.000000}
\pgfsetstrokecolor{dialinecolor}
\node[anchor=west] at (7.860000\du,17.707500\du){No};
\pgfsetlinewidth{0.100000\du}
\pgfsetdash{}{0pt}
\pgfsetdash{}{0pt}
\pgfsetmiterjoin
\pgfsetbuttcap
{
\definecolor{dialinecolor}{rgb}{0.000000, 0.000000, 0.000000}
\pgfsetfillcolor{dialinecolor}
\pgfsetarrowsend{stealth}
{\pgfsetcornersarced{\pgfpoint{0.000000\du}{0.000000\du}}\definecolor{dialinecolor}{rgb}{0.000000, 0.000000, 0.000000}
\pgfsetstrokecolor{dialinecolor}
\draw (15.550000\du,17.100000\du)--(15.550000\du,18.000000\du)--(13.975000\du,18.000000\du)--(13.975000\du,18.900000\du);
}}
\pgfsetlinewidth{0.100000\du}
\pgfsetdash{}{0pt}
\pgfsetdash{}{0pt}
\pgfsetbuttcap
{
\definecolor{dialinecolor}{rgb}{0.000000, 0.000000, 0.000000}
\pgfsetfillcolor{dialinecolor}
\pgfsetarrowsend{stealth}
\definecolor{dialinecolor}{rgb}{0.000000, 0.000000, 0.000000}
\pgfsetstrokecolor{dialinecolor}
\draw (9.998745\du,15.217795\du)--(13.650000\du,15.275000\du);
}
\end{tikzpicture}
\caption{The reconstruction algorithm}
 \label{fig:reconstruction-algorithm}
\end{figure}

\subsection{The inversion algorithm}\label{sec:thealg}
The reconstruction algorithm which is based on the results of the previous section is depicted schematically in Figure \ref{fig:reconstruction-algorithm}. The solvability of the corresponding systems is discussed in the next section. Note the following:

\begin{enumerate}[(a)]
 \item At the initial stage,  the ``encoding'' of the (yet unknown) jump points takes place. In practice, this means the ``enlargement'' of $\Op$ to \f{\newop \isdef \prod_{i=1}^\np (x-\xi_i)^\theorder \Op}. The parameters of the problem therefore change as follows: $\theorder$ remains the same while \f{\polymaxorder_j \leftarrow \polymaxorder_j+\theorder\np} - see \eqref{eq:piecewise-moments-recurrence-2}.
\item  By Theorem \ref{thm:piecewise-annihilating-operator}, \f{\{\xi_n\}} are the $\np$ distinct common roots of the polynomials which are the coefficients of $\newop$ of multiplicity $\theorder$. The remaining part of the coefficients define the operator \f{\Op^{\dagger}} which annihilates every piece of $\fn$.
\end{enumerate}

\begin{example}[Examples \ref{ex:direct-exponential} and \ref{ex:exponential-recurrence} continued]\label{ex:firstorder}
\f{\fn(x) = \alpha e^{\beta x}} on \f{[0,1]} is annihilated by \f{\Op = \partial - \beta \id}.

Writing down \eqref{eq:systemH} with \f{\nmH=0} yields
\begin{align*}
 \begin{bmatrix}
  m_2-m_1 & -(2m_1-m_0)
 \end{bmatrix}
 \begin{bmatrix}
  -\beta \\1
 \end{bmatrix}
 & = 0
\end{align*}
which has the solution
\begin{align*}
\beta=\frac{2m_1-m_0}{m_1-m_2}
\end{align*}
The constant $\alpha$ is then recovered by
\[
\alpha = \frac{m_0(\fn)}{m_0(e^{\beta x})}=\frac{\beta m_0}{e^{\beta}-1}
\]
Note that this solution requires the first 3 moments instead of 2 as in \eqref{eq:constsystem1sol1}. \qed
\end{example}

Additional examples of complete inversion procedures are elaborated in Appendix \ref{apx:experiments}.

\subsection{Solvability of inverse equations}\label{sec:solvability}
The constants $\nmH$ and $\nmC$ determine the minimal size of the corresponding linear systems \eqref{eq:systemH} and \eqref{eq:particular-solution} in order for all the solutions of these systems to be also solutions of the original problem.

\begin{thm}\label{thm:sizeH}
If $\vec{b} \in \nullsp{H}$, then
\begin{align*}
 m_k\bigl(\diffop(x) \bigl(\Op_{\vec{b}} \fn \bigr)(x) \bigr) = 0 \qquad k=0,1,\dots,\nmH
\end{align*}
\begin{proof}

Denote \f{\vec{b}=(b_{ij})} and let \f{k \geq 0}. By Theorem \ref{thm:operator-coeffs-main-thm} Part \ref{part:mgf} and Proposition \ref{prop:application-of-polynomial-to-moments}, the product of the $k+1$-st row of $H$ with $\vec{b}$ is
\begin{align*}
0=\sum_{j=0}^{\theorder} \sum_{i=0}^{\polymaxorder_j} b_{ij} \mmm &= \sum_{j=0}^{\theorder} \sum_{i=0}^{\polymaxorder_j} b_{ij} m_{i+k}(\polyder{j}(x)) = \sum_{j=0}^{\theorder} \sum_{i=0}^{\polymaxorder_j} m_k(b_{ij} x^i \polyder{j}(x))
= m_k \Biggl(\sum_{j=0}^{\theorder} \biggl( \sum_{i=0}^{\polymaxorder_j} b_{ij} x^i \biggr) \polyder{j}(x) \Biggr)\\&=m_k \biggl(\diffop(x) \bigl(\Op_{\vec{b}} \fn\bigr)(x)\biggr)
\qedhere
\end{align*}
\end{proof}
\end{thm}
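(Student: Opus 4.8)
The plan is to read the statement as a direct repackaging of two facts already proved about the entries of $H$: that each entry $\mmm$ is an honest moment, namely $\mmm=m_{i+k}(\polyder{j}(x))$ with $\polyder{j}(x)=\diffop(x)\partial^j\fn(x)$ (Part \ref{part:mgf} of Theorem \ref{thm:operator-coeffs-main-thm}), and that multiplication by a monomial acts on moments as a shift (Proposition \ref{prop:application-of-polynomial-to-moments}). With these two ingredients in hand, the proof reduces to rewriting the row-by-row null-space condition as a single linear functional applied to one explicit function.

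First I would unpack the hypothesis. Writing $\vec{b}=(b_{ij})$, the condition $\vec{b}\in\nullsp{H}$ says that every row of $H$ annihilates $\vec{b}$; reading off the $(k+1)$-st row from \eqref{eq:systemH}, this is exactly $\sum_{j=0}^{\theorder}\sum_{i=0}^{\polymaxorder_j} b_{ij}\,\mmm=0$ for each $k=0,1,\dots,\nmH$. Note that the range of $k$ here is forced to be $0,\dots,\nmH$ precisely because those are the rows $H$ possesses, which is why the conclusion can only be asserted for this range and not beyond.

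Next I would substitute $\mmm=m_{i+k}(\polyder{j}(x))$ and apply Proposition \ref{prop:application-of-polynomial-to-moments} with $p(x)=x^i$ to get $m_{i+k}(\polyder{j})=m_k(x^i\polyder{j})$. Linearity of the moment functional then collapses the double sum into $m_k\bigl(\sum_{j=0}^{\theorder}\bigl(\sum_{i=0}^{\polymaxorder_j} b_{ij}x^i\bigr)\polyder{j}(x)\bigr)$. The only genuine bookkeeping step is to recognize $\sum_i b_{ij}x^i$ as the order-$j$ coefficient polynomial of the operator $\Op_{\vec{b}}$, and --- the point that makes the factoring clean --- to observe that the factor $\diffop(x)$ inside each $\polyder{j}=\diffop(x)\partial^j\fn$ is the same for every $j$, so it pulls out of the $j$-sum, leaving $\diffop(x)\bigl(\sum_j(\sum_i b_{ij}x^i)\partial^j\fn\bigr)=\diffop(x)\bigl(\Op_{\vec{b}}\fn\bigr)(x)$.

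I do not expect a real obstacle: all the analytic content lives in Part \ref{part:mgf}, which has already converted the formal difference-operator entries of $H$ into concrete moments, so the present statement merely reorganizes that identity. The one thing to watch is the interchange of the two summations (over $i$ and over $j$) together with the fact that the common factor $\diffop(x)$ must be extracted \emph{before} $\Op_{\vec{b}}$ is reassembled from its coefficient polynomials; everything else is just linearity of the integral.
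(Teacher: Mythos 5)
Your proposal is correct and follows exactly the same route as the paper's proof: convert each entry $\mmm$ into a genuine moment via Part \ref{part:mgf} of Theorem \ref{thm:operator-coeffs-main-thm}, shift indices back with Proposition \ref{prop:application-of-polynomial-to-moments}, and use linearity of $m_k$ to reassemble the double sum as $m_k\bigl(\diffop(x)\bigl(\Op_{\vec{b}}\fn\bigr)(x)\bigr)$. Your extra observations --- that the common factor $\diffop(x)$ pulls out of the $j$-sum and that the range $k=0,\dots,\nmH$ is dictated by the rows of $H$ --- are both accurate and implicit in the paper's argument.
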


If it is possible to estimate how many moments of \f{F=\diffop(x) \bigl(\Op_{\vec{b}} \fn \bigr)(x)} should vanish in order to guarantee the identical vanishing of $F$ and therefore also of \f{\Op_{\vec{b}}\fn} for \emph{all possible differential operators} of the prescribed complexity (i.e. of given order and given degrees of its coefficients), then $\nmH$ may be taken to be this number. Then, every solution of \eqref{eq:systemH} will correspond to some annihilating operator. For any specific $\fn$ and $\Op_{\vec{b}}$ such a finite number exists, because any nonzero piecewise-continuous integrable  function has at least some nonzero moments. However, this number may be arbitrarily large as shown by the next example.

\begin{example}[Legendre orthogonal polynomials]\label{ex:orthopoly}
It is known that every family of orthogonal polynomials satisfies a differential equation of order \f{\theorder=2} of the following type: 
\begin{align}\label{eq:orthpoly-diffeq}
\Op = q(x)\partial^2 + p(x)\partial + \lambda_n \id
\end{align}
where \f{q,p} are fixed polynomials with \f{\deg q \leq 2}, \f{\deg p \leq 1} and $\lambda_n$ is a scalar which is different for each member of the family. 

Consider \f{\{L_n(x)\}} - the family of Legendre orthogonal polynomials. The interval of orthogonality is \f{[a,b]=[-1,1]} and \f{\diffop=(\shift^2-\id)^2}. $L_n$ is annihilated by \f{\Op_n=(1-x^2)\partial^2-2x\partial+\lambda_n\id}. Furthermore, \f{\langle L_n(x), x^k \rangle = 0} for \f{k \leq n-1}. The ``reconstruction problem'' for $L_n(x)$ (assume it is normalized)  is to find the constant $\lambda_n$ from the moments (it is well-known that \f{\lambda_n=n(n+1)}).

Take an arbitrary vector \f{\vec{b}=\begin{bmatrix} b_{00} & b_{11} & b_{02} & b_{22}\end{bmatrix}^T} so \f{\Op_{\vec{b}}=b_{00}\id+b_{11}x\partial+(b_{02}+b_{22}x^2)\partial^2}. The function \f{\diffop(x) \Op_{\vec{b}} L_n} is a polynomial of degree \f{n+4}, so it is uniquely determined by its \f{n+5} moments (\cite{sig_ack}). By Theorem \ref{thm:sizeH}, this would be the minimal size of the system \eqref{eq:systemH}. The entries of the $k$-th row of $H$ are
\begin{align*}
 \mm{0}{0}{k} &= m_{k+4}-2m_{k+2}+m_k & i=0,j=0\\
 \mm{1}{1}{k} &= -\bigl((k+5)m_{k+4}-2(k+3)m_{k+2}+(k+1)m_k\bigr) & i=1,j=1\\
 \mm{0}{2}{k} &= (k+4)(k+3)m_{k+2}-2(k+2)(k+1)m_k+k(k-1)m_{k-2} & i=0,j=2\\
 \mm{2}{2}{k} &= (k+6)(k+5)m_{k+4}-2(k+4)(k+3)m_{k+2}+(k+2)(k+1)m_k & i=2,j=2
\end{align*}

The first \f{n-5} rows are identically zero because the maximum moment of $L_n$ involved is \f{m_{n-1}}. We are looking for a solution of the form \f{\vec{b}=\begin{bmatrix}\lambda_n & -2 & 1 & -1\end{bmatrix}^T}. The \f{(n-4)}-th row is \[\begin{bmatrix}m_n & -(n+1)m_n & 0 & (n+2)(n+1)m_n\end{bmatrix}\] and that is just enough to reconstruct the operator: \f{m_n(\lambda_n+2(n+1)-(n+2)(n+1))=0} and thus \f{\lambda_n = n(n+1)}, as expected. The reason for the original estimate \f{n+5} not being sharp is that some of the coefficients of $\Op$ (in fact, all but one) were known a priori.
\qed
\end{example}

Similar argument may be used in order to estimate $\nmC$. Straightforward computation leads to
\newcommand{\theF}{\mathfrak{G}_{\vec{\boldsymbol \alpha}}(x)}
\begin{prop}\label{prop:sizeC}
If \f{\vec{\boldsymbol \alpha}=(\alpha_{in})} is a solution of \eqref{eq:particular-solution}, then
\begin{align}\label{eq:particular-solution-moment-vanishing}
m_k \bigl( \theF  \bigr) = 0, \quad k=0,1,\dotsc,\nmC
\end{align}
where
\begin{align}\label{eq:difference-funct}
\theF &\isdef \fn(x)-\sum_{n=0}^\np \sum_{i=1}^\theorder \alpha_{in} \bas_i(x)
\qed
\end{align}
\end{prop}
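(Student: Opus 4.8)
The plan is to read $\theF$ as the \emph{piecewise} discrepancy between $\fn$ and the reconstructed function, and then to split the moment integral over the continuity intervals $\Delta_n$ so that each piece matches exactly one summand of a single row of the linear system \eqref{eq:particular-solution}. Concretely, since $\fn \in \rpaclass$ shares the basis $\{\bas_i\}_{i=1}^{\theorder}$ of $\nullsp{\Op}$ on every piece, I would interpret the subtracted term $\sum_{n=0}^{\np}\sum_{i=1}^{\theorder}\alpha_{in}\bas_i(x)$ as the function whose restriction to $\Delta_n$ equals $\sum_{i=1}^{\theorder}\alpha_{in}\bas_i(x)$; that is, $\theF|_{\Delta_n}=\fn_n-\sum_{i=1}^{\theorder}\alpha_{in}\bas_i$. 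With this reading the claim reduces to recognizing that the $(k+1)$-st row of \eqref{eq:particular-solution} is precisely the statement $m_k(\theF)=0$.

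First I would compute $m_k(\theF)$ directly, using additivity of the integral over the partition $a=\xi_0<\dotsb<\xi_{\np+1}=b$ together with linearity in the coefficients $\alpha_{in}$:
\begin{align*}
m_k(\theF) &= \sum_{n=0}^{\np}\int_{\xi_n}^{\xi_{n+1}} x^k\Bigl(\fn_n(x)-\sum_{i=1}^{\theorder}\alpha_{in}\bas_i(x)\Bigr)\,dx\\
&= \sum_{n=0}^{\np} m_{k,n}-\sum_{n=0}^{\np}\sum_{i=1}^{\theorder}\alpha_{in}\int_{\xi_n}^{\xi_{n+1}} x^k\bas_i(x)\,dx\\
&= m_k-\sum_{n=0}^{\np}\sum_{i=1}^{\theorder} c_{i,k}^{n}\,\alpha_{in},
\end{align*}
where I used $m_k=\sum_{n=0}^{\np} m_{k,n}$ and the definition $c_{i,k}^{n}=\int_{\xi_n}^{\xi_{n+1}} x^k\bas_i(x)\,dx$ from the statement of Proposition \ref{prop:particular-solution}.

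Then I would invoke the hypothesis: since $\vec{\boldsymbol\alpha}=(\alpha_{in})$ solves \eqref{eq:particular-solution}, its $(k+1)$-st row reads $\sum_{n=0}^{\np}\sum_{i=1}^{\theorder} c_{i,k}^{n}\,\alpha_{in}=m_k$ for every $k=0,1,\dotsc,\nmC$. Substituting this into the last display gives $m_k(\theF)=m_k-m_k=0$ for all such $k$, which is exactly \eqref{eq:particular-solution-moment-vanishing}.

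I do not expect a genuine obstacle here; as the preceding remark anticipates, this is a straightforward computation. The only point requiring care is the piecewise interpretation of $\theF$: the outer sum over $n$ in its definition must be matched to the partition so that on each $\Delta_n$ one subtracts precisely the $n$-th local reconstruction $\sum_i\alpha_{in}\bas_i$, after which the splitting of $\int_a^b$ over the $\Delta_n$ aligns term-by-term with the rows of $C$. Once this bookkeeping is fixed the identity is exact, and no estimate of $\nmC$ beyond the observation that all $\nmC+1$ rows of the system hold is required.
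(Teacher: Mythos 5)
Your proof is correct and is exactly the ``straightforward computation'' the paper alludes to but does not write out: split the moment of the discrepancy function over the continuity intervals $\Delta_n$, so that $m_k$ of the discrepancy becomes $m_k - \sum_{n=0}^{\np}\sum_{i=1}^{\theorder} c_{i,k}^{n}\alpha_{in}$, which vanishes for $k=0,\dotsc,\nmC$ precisely because these are the rows of \eqref{eq:particular-solution}. Your insistence on the piecewise reading of the subtracted term (on $\Delta_n$ one subtracts only $\sum_i \alpha_{in}\bas_i$) is also the intended interpretation, consistent with how the discrepancy function is used in Example \ref{ex:piecewise-const}.
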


Every function \f{\theF}  is a (piecewise) solution of \f{\Op f=0}. By Theorem \ref{thm:piecewise-recurrence} the moments of $\theF$ satisfy a linear recurrence relation \eqref{eq:piecewise-moments-recurrence-2}. Therefore, the maximal number of moments of \f{\theF \not\equiv 0} which are allowed to vanish is explicitly bounded by \eqref{eq:piecewise-moments-recurrence-2}. For instance, $\nmC$ may be taken as the length of the recurrence plus the value of the largest positive integer zero of its leading coefficient.

\begin{example}[Piecewise-constant functions on \f{[0,1]}]\label{ex:piecewise-const} Each piece of $\fn$ is a constant \f{\fn_i(x) \equiv c_i}. The operator \f{\Op=\prod_{i=1}^{\np}(x-\xi_i)\partial} annihilates entire $\fn$ and \f{\diffop(\shift)=\shift(\shift-\id)}.

\begin{itemize}
\item First we determine the minimal size of the system \eqref{eq:systemH}. For every polynomial $q(x)$ of degree $\np$, the moments of the function \f{f_{q}=\left(\diffop(x)q(x)\partial\right)\fn} are
\[
m_k(f_q)=\sum_{i=1}^\np q_i \xi_i^k, \quad q_i=\xi_i(\xi_i-1)q(\xi_i)
\]
Assume that the $\xi_i$'s are pairwise distinct and do not coincide with the endpoints. We claim that \f{\nmH=\np-1} is sufficient. Indeed, assume that \f{m_k(f_q)=0} for \f{k=0,1,\dots,\np-1}. Then
\[
X\vec{q}=\vec{0}: \qquad
X=\begin{bmatrix}
1 & 1 & \dots & 1\\
\xi_1 & \xi_2 & \dots &\xi_\np\\
\xi_1^2 & \xi_2^2 & \dots &\xi_\np^2\\
\vdots & \vdots &\ \vdots & \vdots\\
\xi_1^{\np-1} & \xi_2^{\np-1} & \dots &\xi_\np^{\np-1}
\end{bmatrix}, \;
\vec{q}= \begin{bmatrix}q_1\\q_2\\\vdots\\q_\np \end{bmatrix}
\]
The matrix $X$ is a Vandermonde matrix and it is nonsingular because \f{\xi_i \neq \xi_j} for \f{i \neq j}. It follows that \f{\vec{q}=\vec{0}} and therefore \f{q(\xi_i)=0} for all $i$. Therefore every solution of \eqref{eq:systemH} is a multiple of \f{\prod_{i=1}^\np (x-\xi_i)}.
\item Now we determine the minimal size of \eqref{eq:particular-solution}. The space $\nullsp{\Op}$ is spanned by piecewise-constant functions with the jump points $\xi_i$. Let \f{\vec{\boldsymbol \alpha}} be a solution to \eqref{eq:particular-solution} and let the function $\theF$ be as in \eqref{eq:difference-funct}. This $\theF$ is again a piecewise-constant function with $\np$ jump points $\xi_i$. Therefore, the moments of \f{\theF} satisfy the recurrence relation \eqref{eq:piecewise-moments-recurrence-2}. This recurrence has nonzero leading coefficient and its length is \f{\np+2}. Therefore, vanishing of the first \f{\np+1} moments of $\theF$ implies \f{\theF \equiv 0} and consequently \f{\nmC=\np} will be sufficient. The constants $c_i$ are precisely the solution of \eqref{eq:particular-solution}.
\qed
\end{itemize}
\end{example}

\section{Stability of inversion in a noisy setting}\label{sec:stability}
The presented inversion scheme assumes that the unknown signal \f{\fn \in \rpaclass} is clean from noise and that the moment sequence \f{\{m_k\}} is computed with infinite precision. In many applications this assumption is unrealistic. Therefore, it is practically important to analyze the sensitivity of the inversion to the noise in the data, both theoretically and numerically. In Section \ref{sec:stability-theoretical} below, we give a theoretical stability estimate for one special case of linear combinations of Dirac $\delta$-functions. Then we present results of numerical simulations for several test cases (Section \ref{sec:numerical-results}).

\newcommand{\nnn}[1]{\widetilde{#1}}
\newcommand{\nfn}{\nnn{\fn}}

\subsection{Theoretical stability analysis}\label{sec:stability-theoretical}
The general case of an arbitrary piecewise $D$-finite function appears to be difficult to analyze directly. Here we provide stability estimates for the reconstruction of the model
\begin{equation}\label{eq:delta-fun}
\fn(x) = \sum_{i=1}^{\np} a_i \delta(x-\xi_i)
\end{equation}

We argue that it is crucial to understand the behaviour of the reconstruction in this special case. Consider a generic \f{\fn \in \rpaclass}. Then \f{\Op \fn} is a linear combination of $\delta$-functions and their derivatives (see the proof of Theorem \ref{thm:piecewise-annihilating-operator}). Thus the model \eqref{eq:delta-fun} may be considered a ``prototype'' which captures the discontinuous nature of $\fn$. 

One can prove the following
\begin{thm}\label{thm:prony-stability}
Let $\fn$ be given by \eqref{eq:delta-fun}. Assume that the moments \f{m_k(\fn)} are known with accuracy $\er$. There exists a constant $C_1$ such that the parameters may in principle be recovered with the following accuracy:
\begin{align*}
 |\Delta \xi_i| &\leq C_1 \er a_i^{-1}\\
|\Delta a_i| &\leq C_1 \er
\end{align*}
where $C_1$ depends only on the geometry of \f{\xi_1,\dotsc,\xi_{\np}}:
\[
 C_1 \sim \prod_{i \neq j} |\xi_i-\xi_j|^{-1}
\]
\begin{proof}[Outline of the proof:]
 Write the Jacobian matrix of $\fwm$ explicitly and use the inverse function theorem to get the Jacobian of $\ivm$. This matrix may be factorized as a product of a diagonal matrix \f{\diag \{a_1,\dotsc,a_{\np}\}} and a Vandermonde matrix $V$ on the grid \f{\xi_1,\dotsc,\xi_{\np}}. Then $C_1$ may be taken to be the norm of \f{V^{-1}}.
\end{proof}
\end{thm}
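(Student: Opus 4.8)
The plan is to treat the reconstruction of the model \eqref{eq:delta-fun} as a classical Prony-type inverse problem and to control its conditioning through the Jacobian of the forward map $\fwm$. Since $\fn=\sum_{i=1}^{\np} a_i\delta(x-\xi_i)$, the moments are $m_k=\sum_{i=1}^{\np} a_i\xi_i^k$. There are $2\np$ unknowns $a_1,\dots,a_{\np},\xi_1,\dots,\xi_{\np}$, so I would take the first $2\np$ moments and regard $\fwm$ as a smooth map $\reals^{2\np}\to\reals^{2\np}$.

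First I would compute the Jacobian $J$ of $\fwm$ at the true parameters. The partial derivatives are $\partial m_k/\partial a_i=\xi_i^k$ and $\partial m_k/\partial\xi_i=k\,a_i\,\xi_i^{k-1}$. Factoring the scalar $a_i$ out of each $\xi_i$-column gives a clean factorization $J=V\,D$, where $D=\diag(1,\dots,1,a_1,\dots,a_{\np})$ and $V$ is the confluent (Hermite) Vandermonde matrix on the grid $\xi_1,\dots,\xi_{\np}$, whose columns are the usual Vandermonde columns $(1,\xi_i,\dots,\xi_i^{2\np-1})^{\top}$ together with their $\xi_i$-derivatives. Because the $\xi_i$ are pairwise distinct, $V$ is nonsingular, and because each $a_i\neq0$, so is $D$; hence $J$ is invertible.

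Next I would invoke the inverse function theorem: in a neighborhood of the true parameters, $\ivm=\fwm^{-1}$ is differentiable with Jacobian $J^{-1}=D^{-1}V^{-1}$. A moment perturbation of size $\er$ therefore produces, to first order, a parameter perturbation bounded by $\|J^{-1}\|\,\er\le\|D^{-1}\|\,\|V^{-1}\|\,\er$. Reading off the block structure of $D^{-1}$, the amplitude components of $J^{-1}$ are rows of $V^{-1}$ and carry no $a_i^{-1}$ factor, while the node components carry exactly one, which yields $|\Delta a_i|\le C_1\er$ and $|\Delta\xi_i|\le C_1\,a_i^{-1}\er$ with $C_1$ absorbing $\|V^{-1}\|$ and the dimensional constants. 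The final quantitative step is to estimate $C_1\sim\|V^{-1}\|$ geometrically: using the explicit Hermite-interpolation formula for the inverse of a confluent Vandermonde matrix, its entries are built from the Lagrange basis polynomials $\ell_i(x)=\prod_{j\neq i}(x-\xi_j)/(\xi_i-\xi_j)$ and their derivatives, so the norm is governed by $\prod_{i\neq j}|\xi_i-\xi_j|^{-1}$, giving the stated scaling.

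The hardest part will be this last estimate. For an ordinary Vandermonde matrix the inverse is classical, but the confluence (each node repeated, with a derivative column) makes the explicit inverse and its norm bound genuinely more delicate, since one must differentiate products of the $\ell_i$ and track how the node separations enter. A secondary subtlety worth flagging is that the inverse function theorem supplies only a local, first-order (linearized) bound — this is exactly why the statement asserts recovery \emph{in principle}; the estimate should be read as a conditioning result for the Jacobian of $\ivm$ rather than a global error guarantee valid for arbitrarily large $\er$.
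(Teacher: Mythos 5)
Your proposal is correct and takes essentially the same approach as the paper's own outline: compute the Jacobian of the forward map $\fwm$, factor it as a diagonal matrix of the amplitudes times a Vandermonde-type matrix on the nodes $\xi_1,\dotsc,\xi_{\np}$, apply the inverse function theorem, and take $C_1 \sim \|V^{-1}\|$. Your added precision that $V$ is a \emph{confluent} (Hermite) Vandermonde matrix with derivative columns, and your explicit accounting of where the single $a_i^{-1}$ factor lands, are refinements of detail rather than a different argument.
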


The above result can be generalized to the case of the model
\[
 \fn(x) = \sum_{i=1}^{\np} \sum_{j=0}^{\polymaxorder_j} a_{ij} \der{\delta}{j}(x-\xi_i)
\]
We plan to present these results in detail separately.

The estimate of Theorem \ref{thm:prony-stability} reflects only the \emph{problem conditioning} (i.e. the sensitivity of the map $\ivm$ defined in Section \ref{sec:inversion} with respect to perturbations). Moreover, this estimate is sharp in the worst-case scenario, since it is based on directly evaluating the norm of the Jacobian.

On the other hand, an accurate analysis of the \emph{algorithm itself} should involve estimates for the condition numbers of the matrices $H$ and $C$ (see \eqref{eq:systemH} and \eqref{eq:particular-solution}, respectively) as well as the sensitivity of the root finding step (Figure \ref{fig:reconstruction-algorithm}). Let us briefly discuss these.
\begin{enumerate}
\item By Theorem \ref{thm:operator-coeffs-main-thm} and Remark \ref{rem:hankel-striped}, solving \eqref{eq:systemH} is equivalent to calculating a Hermite-Pad\'{e} approximant to \f{\Phi=\{1,h_0(z),\dotsc,h_{\theorder}(z)\}}. Estimates for the condition number of $H$ in terms of $\Phi$ are available (\cite{cabay1996cnp}). These estimates may hopefully be understood in terms of the differential operator $\Op$ and the function $\fn$ itself, using the connection provided by Theorem \ref{thm:operator-coeffs-main-thm}.
\item It is known (e.g. \cite{stewart1980bmi}) that if the coefficients of a (monic) polynomial are known up to precision $\er$, then the error in recovering a root of multiplicity $m$ may be as large as \f{\er^{\frac{1}{m}}}.
\item There is some freedom in the choice of the basis \f{\{\bas_i\}_{i=1}^\theorder} for the null space of $\Op$ (Proposition \ref{prop:particular-solution}). It should be investigated how this choice affects the condition number of $C$. 
\end{enumerate}
Considering the above, we hope that a stable reconstruction is possible at least for signals with relatively few and sufficiently separated points of discontinuity, as well as some mild conditions on the other model parameters.

\subsection{Numerical results}\label{sec:numerical-results}
In order to further justify the hope expressed in the previous section, we have performed several numerical simulations using a straightforward implementation  of the reconstruction algorithm from Section \ref{sec:inversion} (the implementation details are provided in Appendix \ref{apx:experiments}).

We have chosen 3 different models for the unknown signal: a rational function (no discontinuities),
a piecewise sinusoid, and a piecewise-polynomial function. In every simulation, the signal was first sampled on a sufficiently dense grid, then a white Gaussian noise of specified Signal-to-Noise Ratio (SNR) was added to the samples, and then the moments of the noised signal were calculated by trapezoidal numerical integration. To measure the success of the reconstruction, we calculated both the Mean Squared Error (MSE) as well as the error in the relevant model parameters. The results are presented in Figures \ref{fig:rational-rec}, \ref{fig:pwsin-rec} and \ref{fig:ppoly-rec}. Note that the SNR is measured in decibels (dB).

Several test signals were successfully recovered after a moderate amount of noise was added. Furthermore, increase in SNR generally leads to improvement in accuracy. Nevertheless, some  degree of instability is present, which is evident from the peaks in Figures \ref{subfig:rat-snr}, \ref{subfig:sin-snr}, \ref{subfig:ppoly-snr}. 

 While for some models, the increase in complexity leads to severe performance degradation, for other models it is not the case. Compare the growth in the degree of the rational function on one hand (Figure \ref{subfig:rat-deg}), and the increase in the number of jump points for the piecewise-constant reconstruction on the other hand (Figure \ref{subfig:ppoly-jumps}).

\begin{figure}
\subfigure[]{
\includegraphics[scale=0.45]{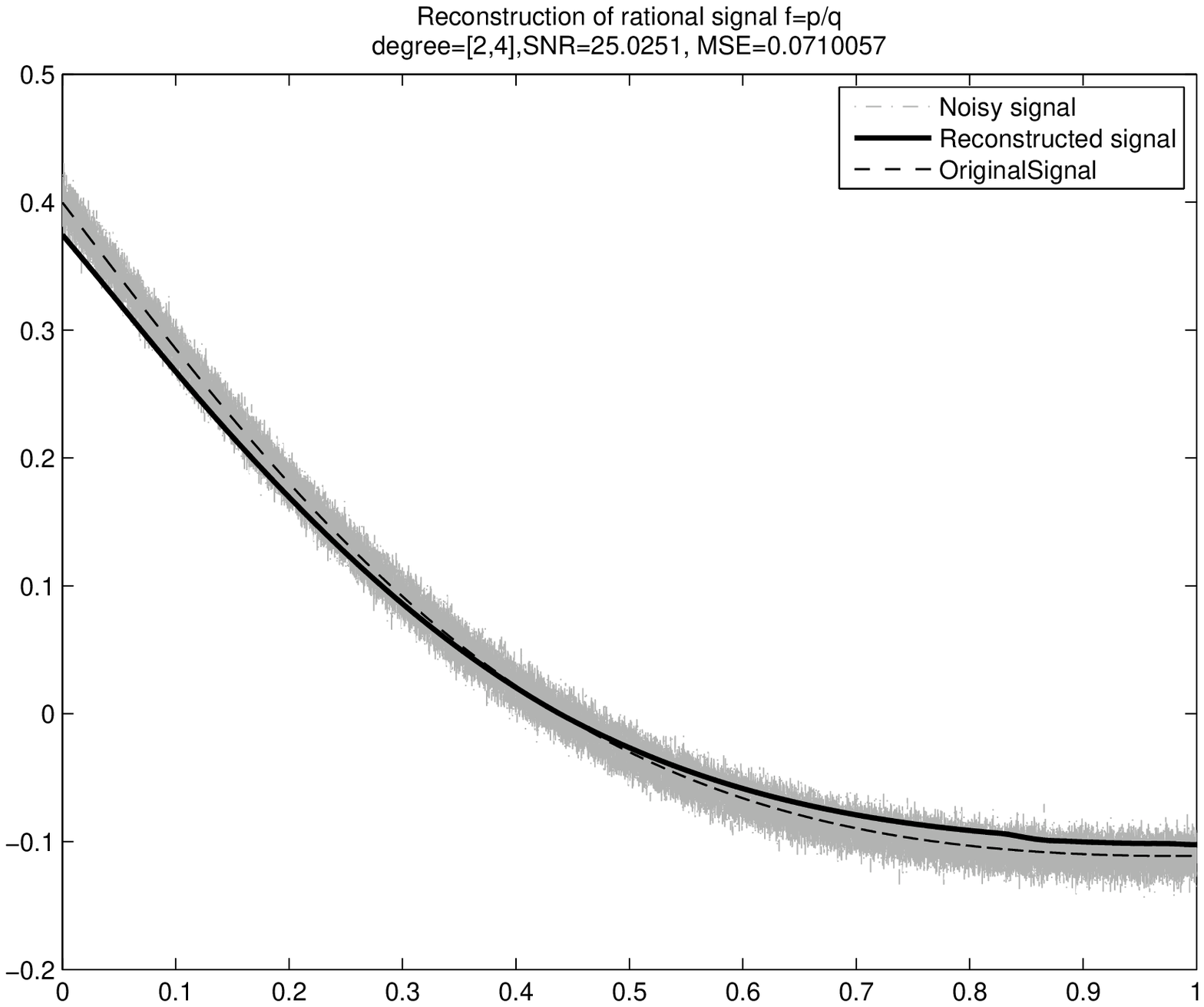}
\label{subfig:rat-test1}
}
\subfigure[]{
\includegraphics[scale=0.45]{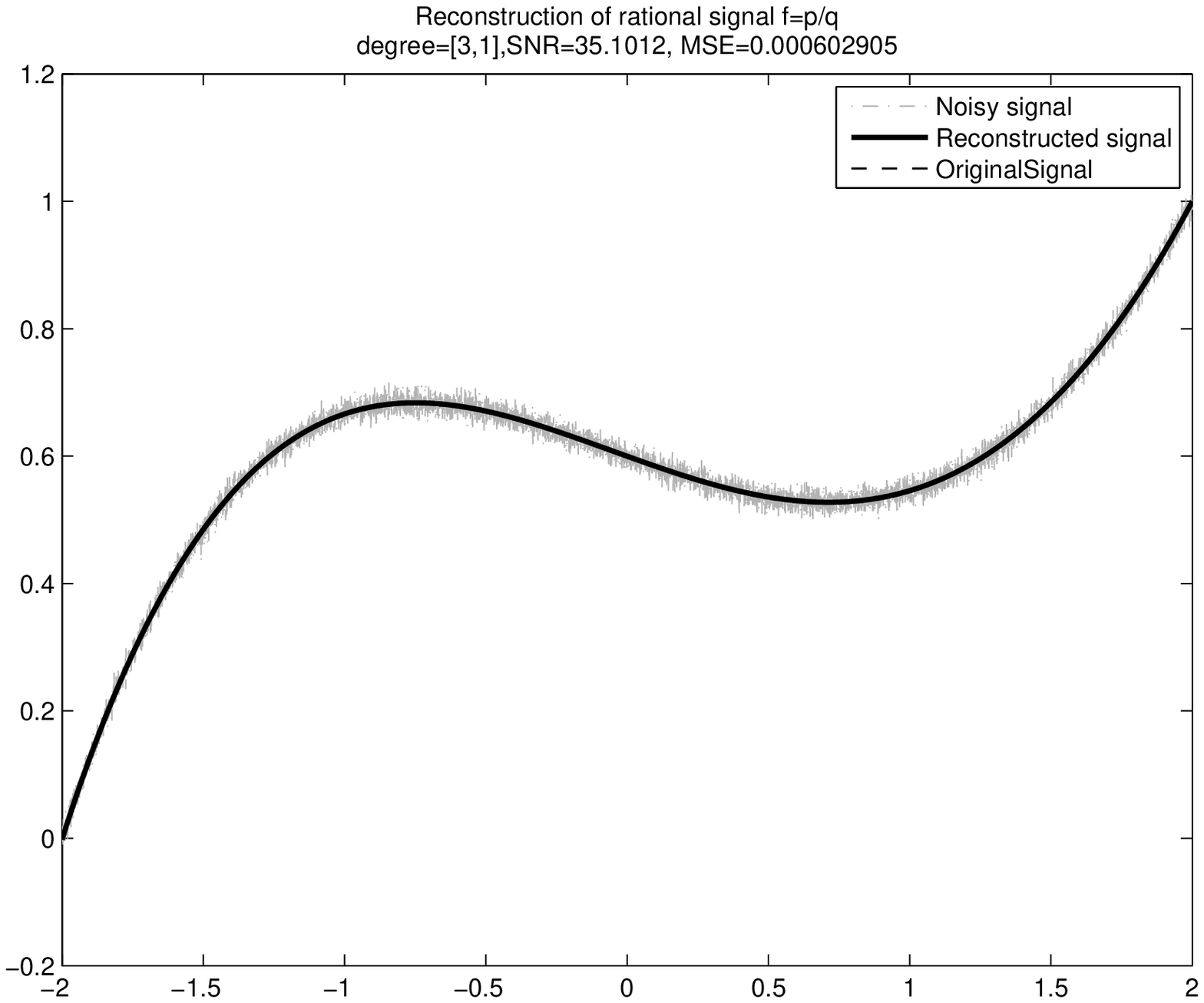}
\label{subfig:rat-test2}
}
\subfigure[]{
\includegraphics[scale=0.45]{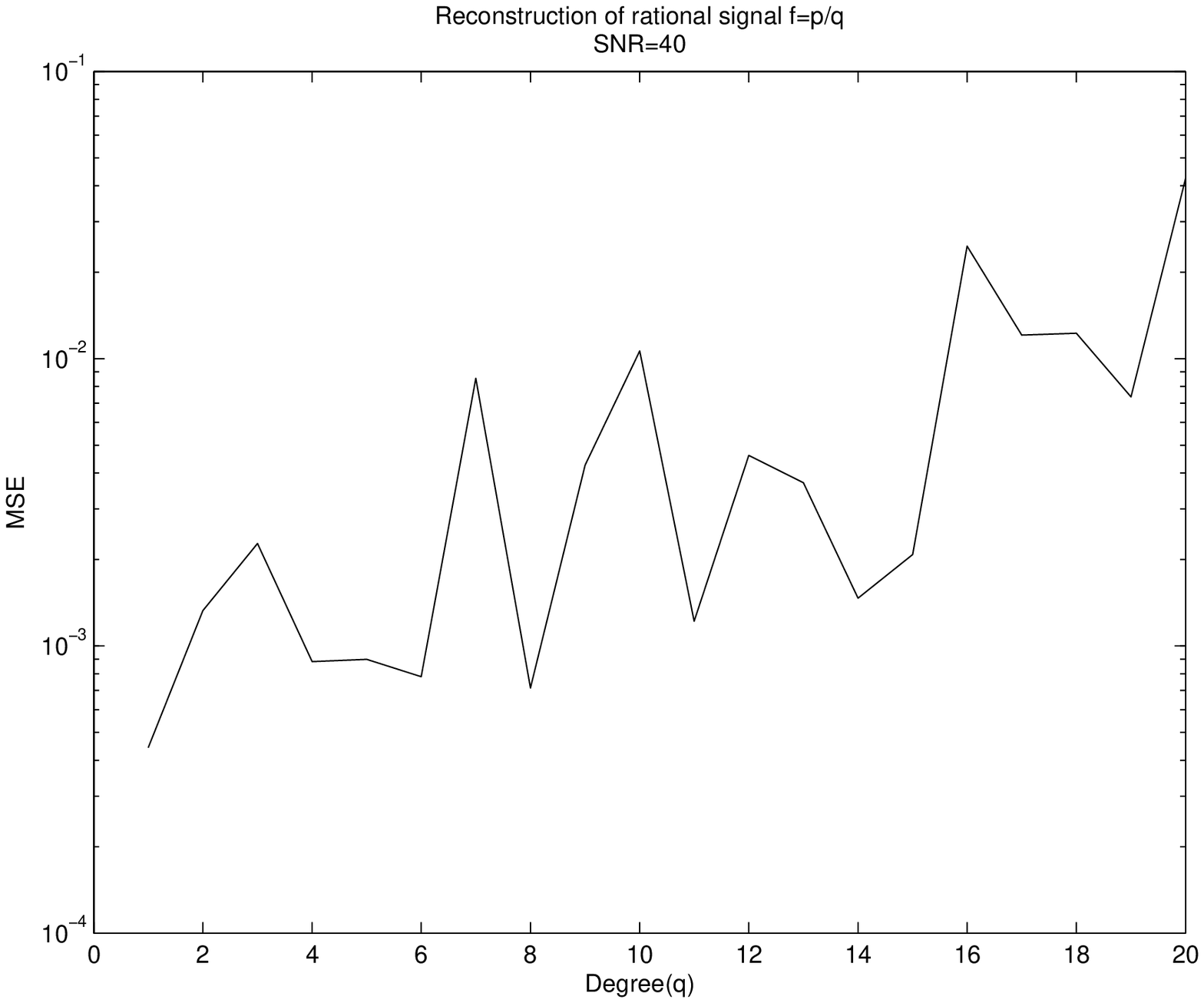}
\label{subfig:rat-deg}
}
\subfigure[]{
\includegraphics[scale=0.45]{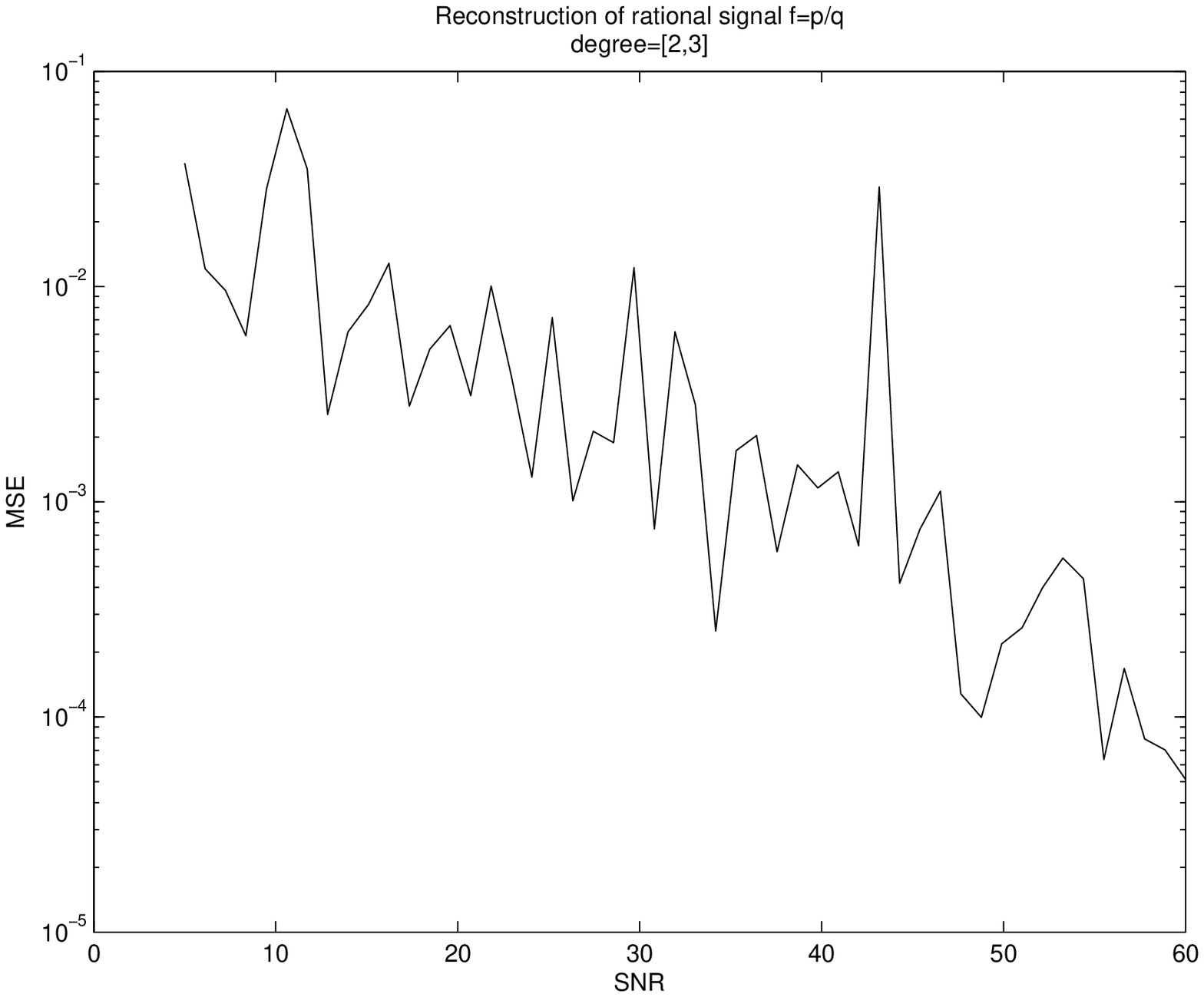}
\label{subfig:rat-snr}
}
\caption{{\bf Reconstruction of rational signals \f{f=p/q}.} \small \subref{subfig:rat-test1} The signal with \f{\deg p=2, \deg q=4} corrupted by noise with SNR=25 dB. Reconstruction MSE is 0.07. \subref{subfig:rat-test2} The signal with \f{\deg p=3, \deg q=1} corrupted by noise with SNR=35 dB. Reconstruction MSE is 0.0006. The reconstructed signal is visually indistinguishable from the original. \subref{subfig:rat-deg} Dependence of the MSE on the degree of the denominator with SNR=40 dB. \subref{subfig:rat-snr} Dependence of the MSE on the SNR with  \f{\deg p=2,\deg q=3}.}
\label{fig:rational-rec}
\end{figure}

\begin{figure}
\subfigure[]{
\includegraphics[scale=0.45]{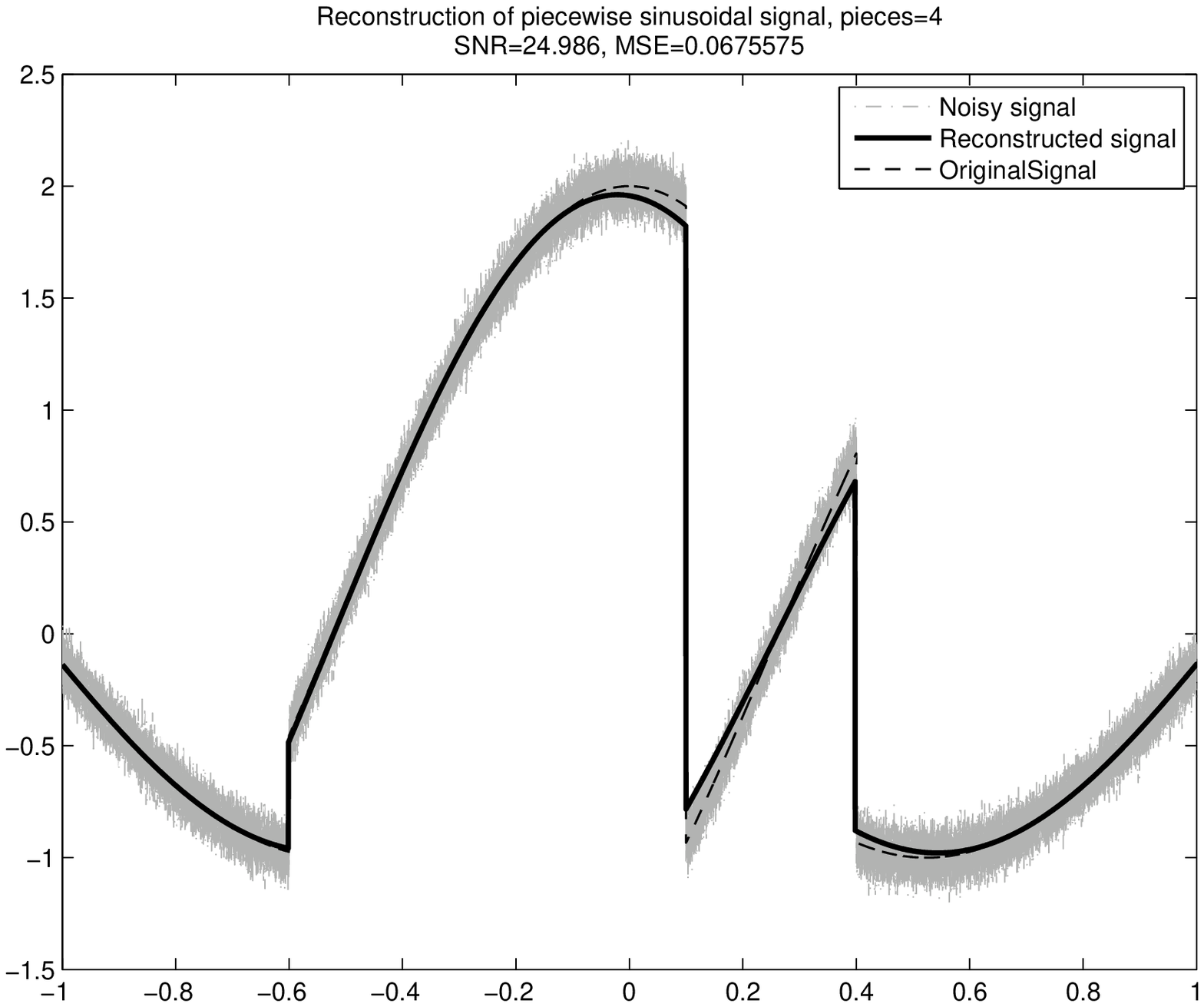}
\label{subfig:sin-test}
}
\subfigure[]{
\includegraphics[scale=0.45]{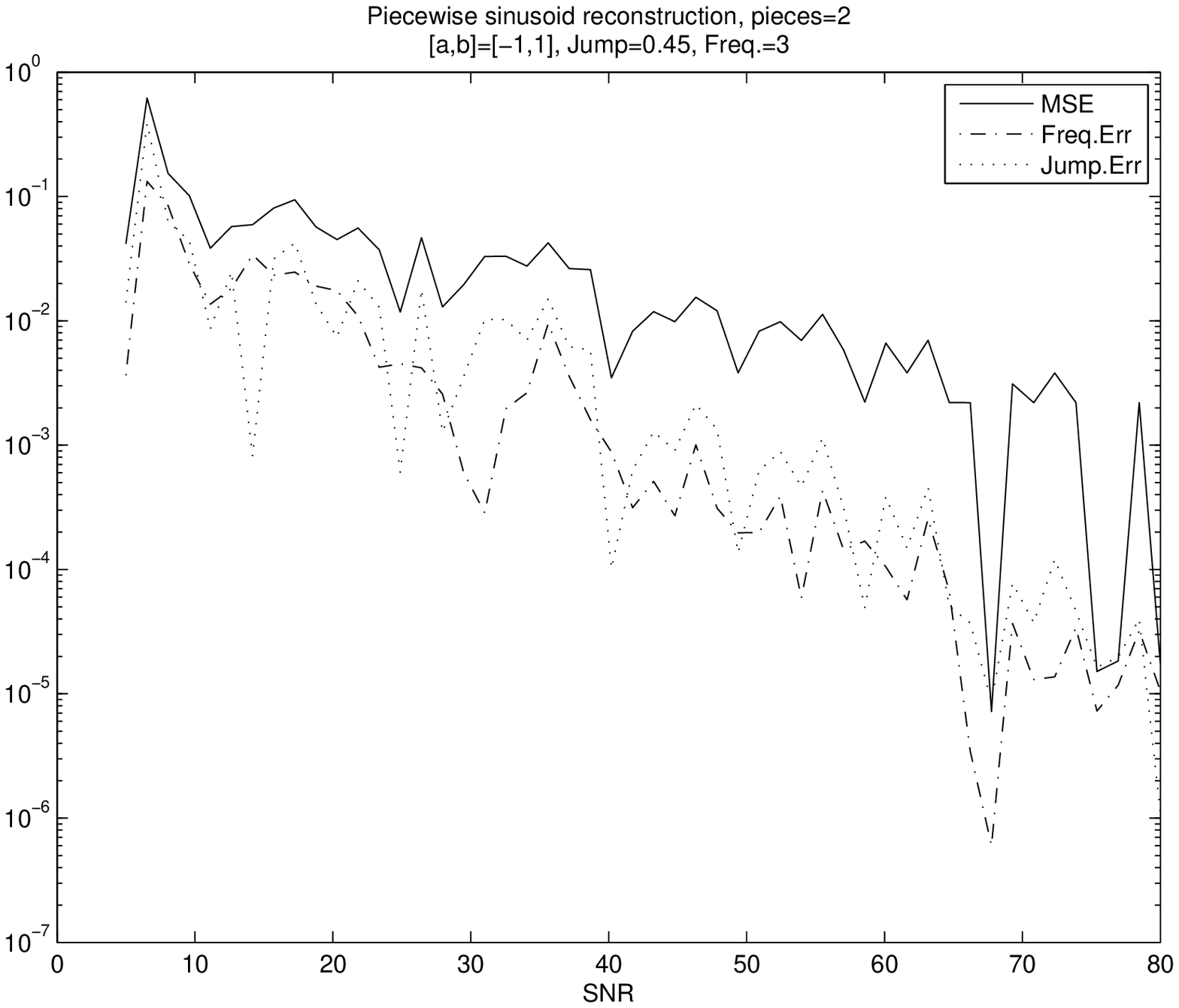}
\label{subfig:sin-snr}
}
\caption{{\bf Reconstruction of piecewise sinusoids.} \small \subref{subfig:sin-test} A sinusoid consisting of 4 pieces, corrupted by noise with SNR=25 dB. Reconstruction MSE=0.068. \subref{subfig:sin-snr} Dependence of the MSE on the SNR in the recontruction of a 2-piece sinusoid. Also plotted are the relative errors in the estimated location of the jump point and the estimated frequency.}
\label{fig:pwsin-rec}
\end{figure}

\begin{figure}
\subfigure[]{
\includegraphics[scale=0.45]{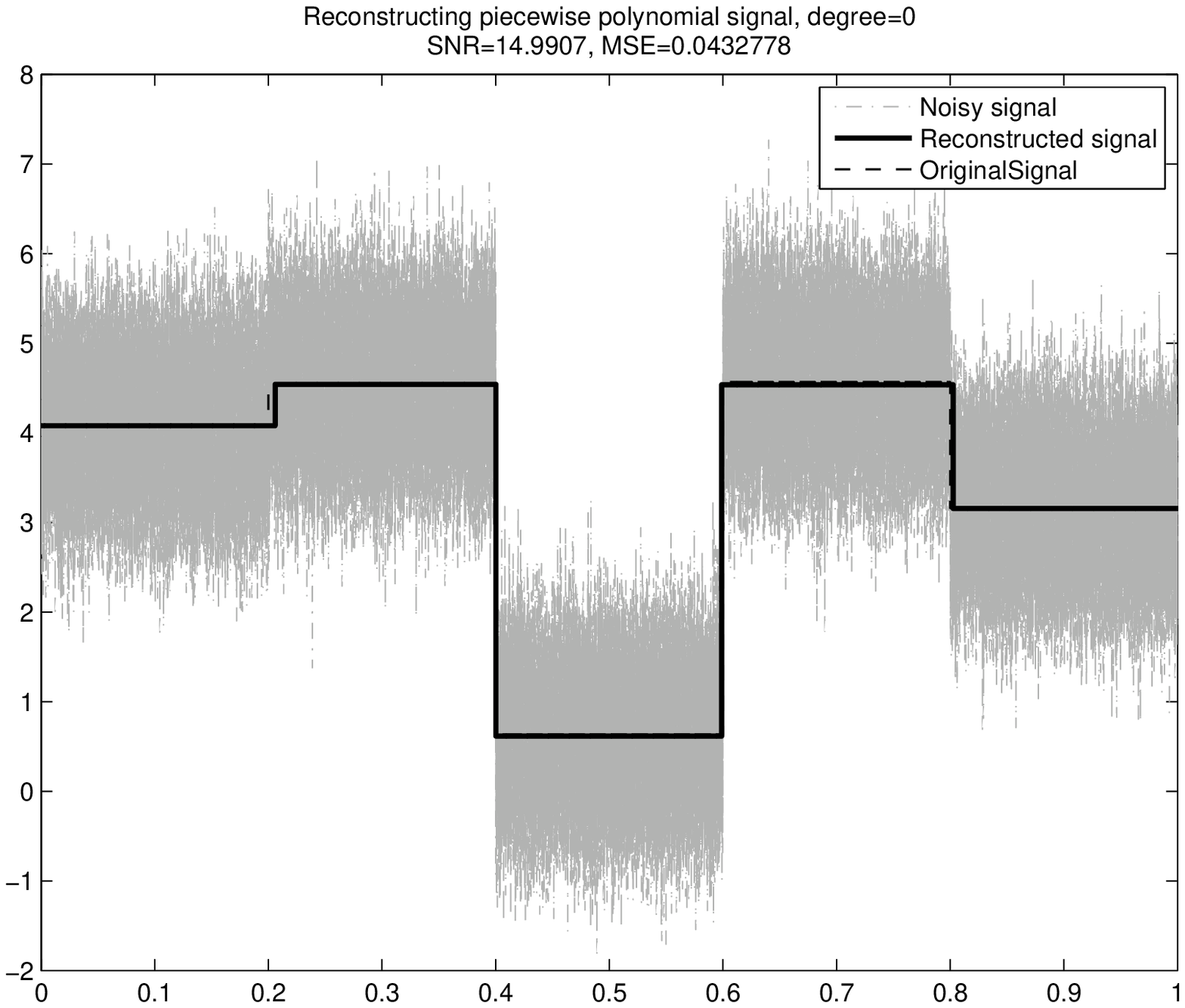}
\label{subfig:ppoly-test1}
}
\subfigure[]{
\includegraphics[scale=0.45]{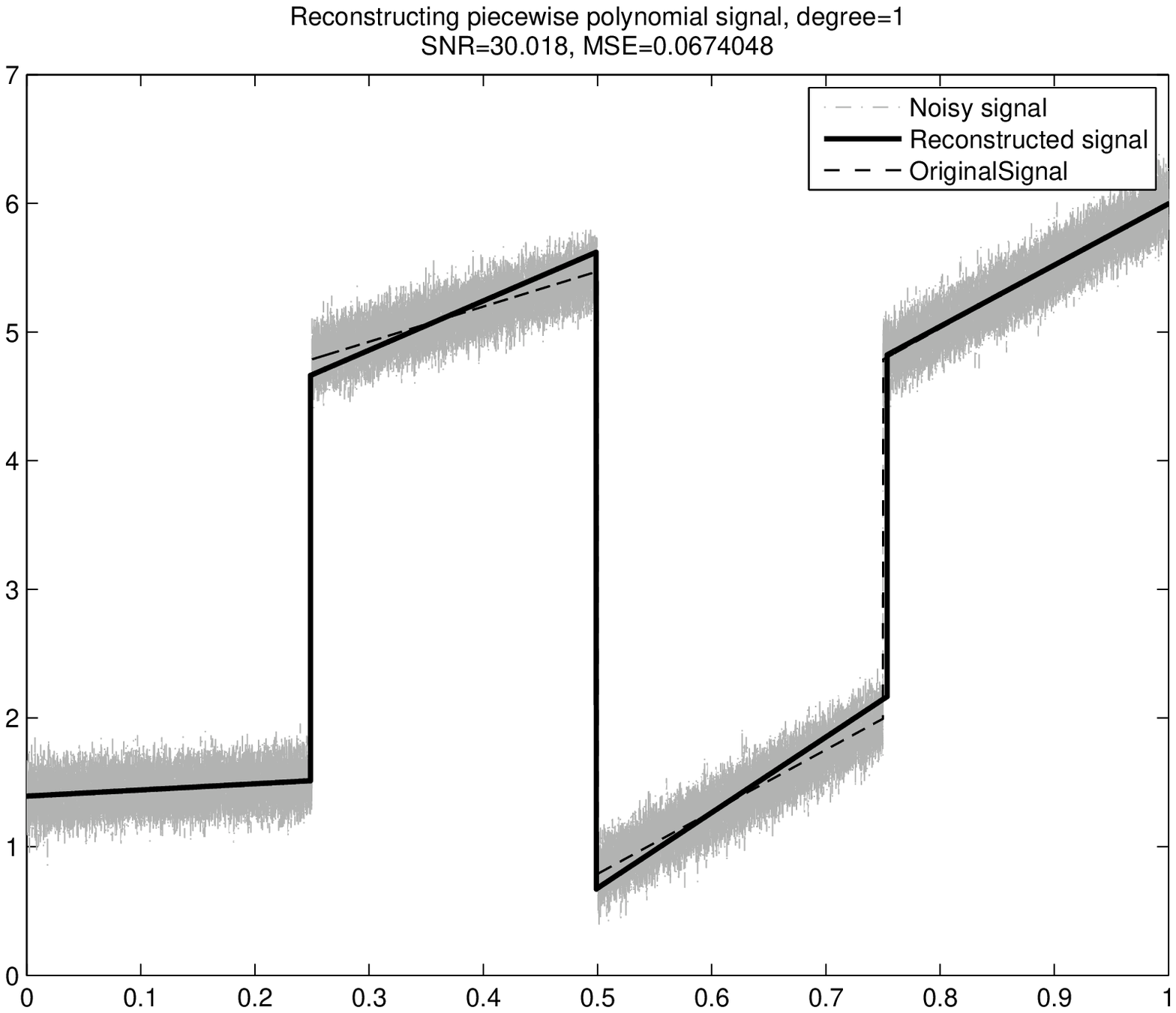}
\label{subfig:ppoly-test2}
}
\subfigure[]{
\includegraphics[scale=0.45]{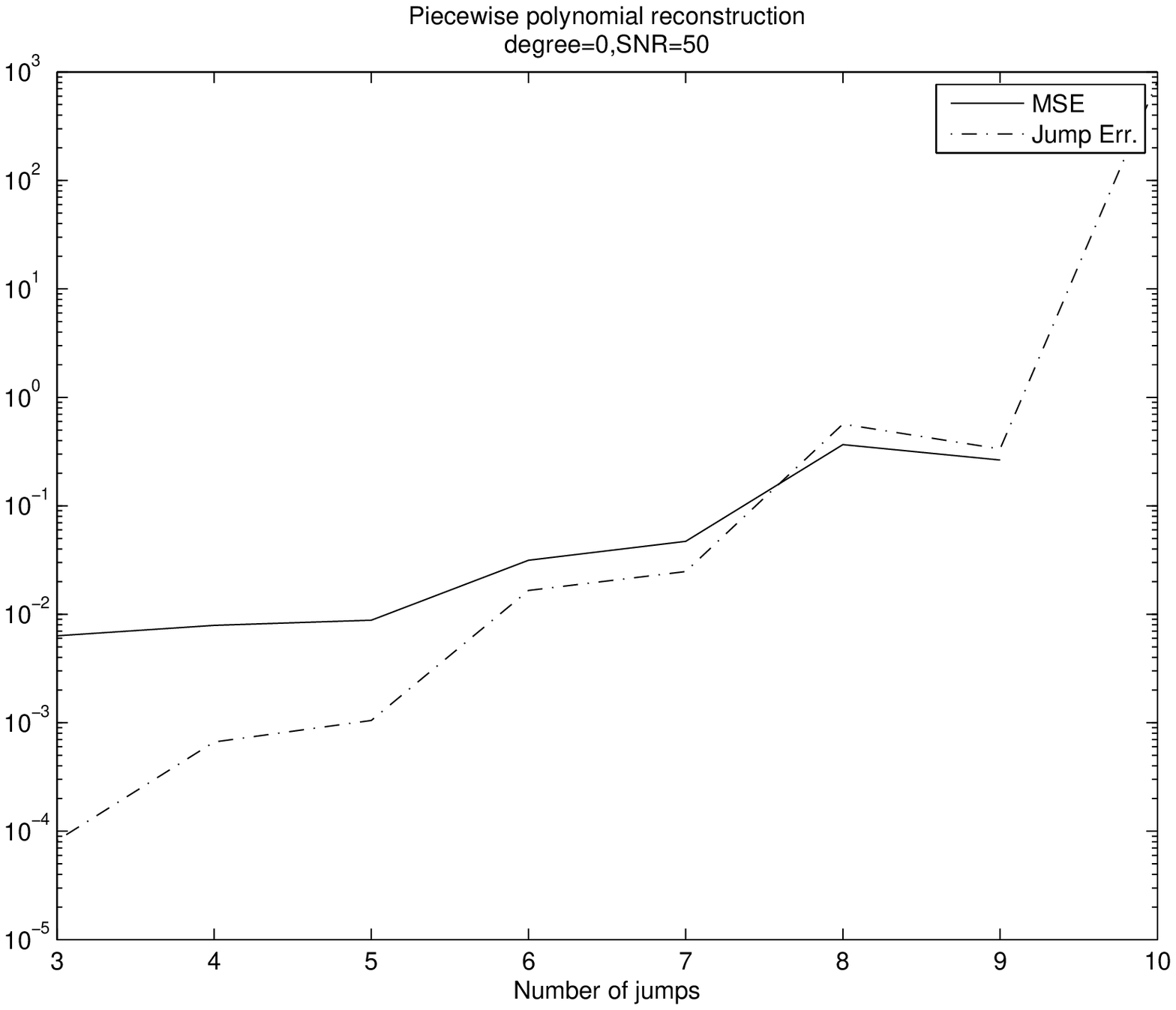}
\label{subfig:ppoly-jumps}
}
\subfigure[]{
\includegraphics[scale=0.45]{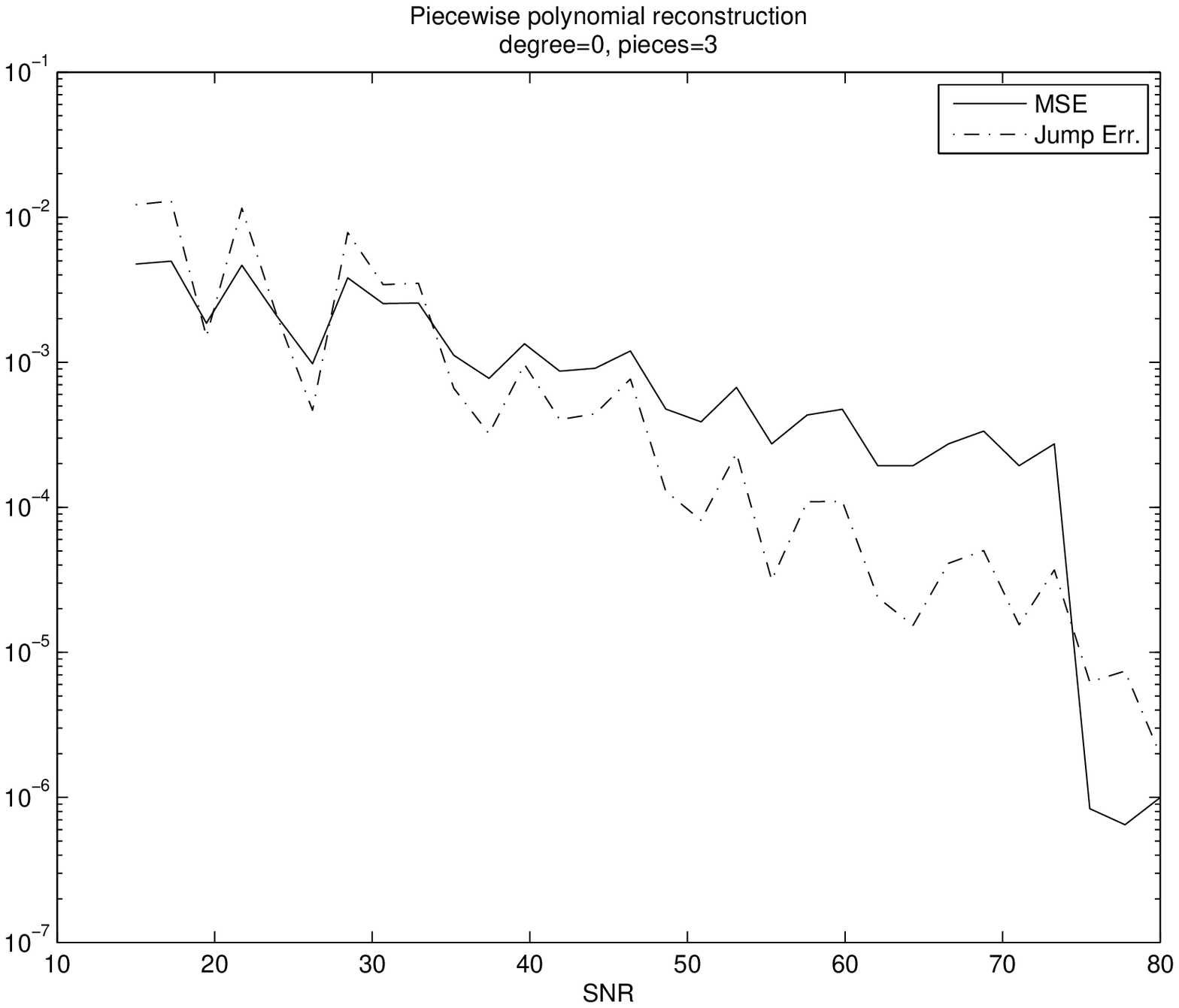}
\label{subfig:ppoly-snr}
}
\caption{{\bf Reconstruction of piecewise-polynomials.} \small \subref{subfig:ppoly-test1} Piecewise-constant signal with 5 jumps corrupted by noise with SNR=15 dB. Reconstruction MSE is 0.043. The reconstructed signal is visually indistinguishable from the original. \subref{subfig:ppoly-test2} Piecewise-linear signal with 3 jumps, corrupted by noise with SNR=30 dB. Reconstruction MSE is 0.067. \subref{subfig:ppoly-jumps} Dependence of the MSE on the number of jumps (\f{\np}) for a piecewise-constant signal, with SNR=50 dB.  The reconstruction failed for \f{\np=10}. \subref{subfig:ppoly-snr} Dependence of the MSE on the SNR for a piecewise constant signal with 2 jumps. \subref{subfig:ppoly-jumps}+\subref{subfig:ppoly-snr} Also plotted is the relative error in the recovered jump location.}
\label{fig:ppoly-rec}
\end{figure}

\section{Discussion}\label{sec:discussion}

The piecewise $D$-finite moment inversion problem appears to be far from completely solved. In the theoretical direction, the solvability conditions need to be further refined. In particular, we hope that minimality results (``what is the minimal number of measurements which is sufficient to recover the model uniquely?'') in the spirit of Examples \ref{ex:orthopoly} and \ref{ex:piecewise-const} concerning the reconstruction of additional classes of functions may be obtained using the methods presented in this paper (Section \ref{sec:solvability}). The importance of this question is discussed e.g. in \cite{sig_ack}, where estimates on the finite moment determinacy of piecewise-algebraic functions are given. In this context, the role of the moment generating function is not yet fully understood. Another open question is the analysis of the case in which the function satisfies different operators on every continuity interval.

In the numerical direction, the results of Section \ref{sec:stability} suggest that a ``naive'' implementation of the presented algorithm is relatively accurate for simple enough signals corrupted by low noise levels. We believe that attempts to improve the robustness of the algorithm should proceed in at least the following directions:
\begin{enumerate}
\item A similar question regarding stability of reconstruction of signals with finite rate of innovation is addressed in \cite{maravic2005sar}. We propose to investigate the applicability of that method to our reconstruction problem. 
\item The connection of the system \eqref{eq:systemH} to Hermite-Pad\'{e} approximation may hopefully be exploited to build some kind of sequence of approximants which converge to the true solution, as more elements of the moment sequence are known. In fact, a similar approach (with standard Pad\'{e} approximants) has been used in the ``noisy trigonometric moment problem'' (\cite{marchbarone:2000}).
\end{enumerate}

Furthermore,  we believe that it is important to understand how the various model parameters influence the stability of the algorithm. The most general answer in our context would be to give stability estimates in terms of the differential operator $\Op$ and the geometry of the jump points.

On the other hand, the algebraic moments \eqref{eq:usual-moments} are known to be a non-optimal choice for measurements (see \cite{talenti1987rff}) due to their strong non-orthogonality. As  stated in the Introduction, the moment inversion is a ``prototype'' for some real-world problems such as reconstruction from Fourier measurements. In fact, it is known that the Fourier coefficients (as well as coefficients with respect to other orthogonal systems) of many ``finite-parametric'' signals  satisfy various relations, and this fact has been utilized in numerous schemes for nonlinear Fourier inversion (\cite{berent2006prs,dragotti2007sma,eckhoff1995arf,kvernadze2004ajd,banerjee1997eau,beckermann2008rgp}). The point of view presented in this paper, namely, the differential operator approach, can hopefully be generalized to include these types of measurements as well. In this regard, we expect that methods of holonomic combinatorics (\cite{zeilberger1990hsa}) may provide useful insights.

\bibliographystyle{plain}
\bibliography{../../bibliography/all-bib}

\appendix
\section{The inversion algorithm - implementation details}\label{apx:experiments}
We have implemented the inversion algorithm of Section \ref{sec:inversion} in the MATLAB environment (\cite{Sigmon:1998:MP}). For each of the three types of signals we built a specific inversion routine. All the three routines have a common base as follows:
\begin{itemize}
\item The matrix $H$ is chosen to be square. Solution to the system \f{H\vec{a}=\vec{0}} is obtained by taking the highest coefficient of $\vec{a}$ to be 1 and then performing standard Gaussian elimination with partial pivoting on the reduced system.
\item The root finding step is performed with the \verb!pejroot! routine of the \verb!MULTROOT! package (\cite{zeng2004amm,zeng2005cmr}), which takes into account the multiplicity structure of the polynomial.
\item The calculation of the moments in the matrix $C$ is done via numerical quadrature. 
\end{itemize}

The difference between the routines lies in the construction of $H$ and the computation of the basis for the nullspace of $\Op$. These are determined by the signal type, as described below.
\begin{enumerate}
\item{\underline{Rational functions:}}
 A rational function \f{f(x)=\frac{p(x)}{q(x)}} is annihilated by the first order operator
 \[
  \Op = (-pq)\partial + (p'q-pq')\id
 \]
Therefore the matrix $H$ consists of two ``stripes'' $V_0$ and $V_1$ (the structure of the matrices $V_i$ is given by \eqref{eq:stripe-def}). If \f{\deg p(x)=r} and \f{\deg q(x)=s}, then the degrees of the coefficients of \f{\Op} are \f{\polymaxorder_0=r+s-1} and \f{\polymaxorder_1=r+s}. Subsequently, \f{V_0} is \f{(\polymaxorder_0+\polymaxorder_1+2) \times (\polymaxorder_0+1)} and \f{V_1} is \f{(\polymaxorder_0+\polymaxorder_1+2) \times (\polymaxorder_1+1)}. After \f{\Op} is reconstructed (without explicitly recovering $p$ and $q$), a solution $\bas$ to \f{\Op \bas \equiv 0} is obtained by numerically solving (using the \verb!ode45! solver) the initial value problem \f{\{\Op \bas(x) \equiv 0, \bas(a)=1,x=a \dots b\}}. Then the final solution is obtained as
\[
 \widetilde{f}(x) = \frac{\int_a^b \bas(x)dx}{m_0} \bas(x)
\]

\item{\underline{Piecewise-sinusoids:}} Let \f{\fn(x)} consist of \f{\np+1} pieces of the form
\[
  \fn_n(x) = A_n \sin(\omega x+\varphi_n)
\]
The annihilating operator for each piece is \f{\Op=\partial^2+\omega^2 \id}, therefore the entire $\fn$ is annihilated by
\[
 \newop = \bigl\{(x-\xi_1)^2 \cdot \dotsc \cdot (x-\xi_{\np})^2\bigr\}(\partial^2+\omega^2\id)
\]
So \f{H=\begin{bmatrix}V_0 & V_2\end{bmatrix}}, where both $V_0$ and $V_2$ are \f{2(2\np+1) \times (2\np+1)}. From  the operator reconstruction step we obtain \f{\recOp = \coeff_1(x) \partial^2 + \coeff_0(x)\id}. The jump points are taken to be the arithmetic means of the corresponding roots of \f{\coeff_0} and \f{\coeff_1}. Because of the normalization performed in the operator recovery step, the frequency \f{\omega'} may be taken to be the square root of the highest coefficient of $\coeff_0$. The reconstruction is considered to be unsuccessful if at least one root fails to lie inside the interval \f{[a,b]}.

The basis for the nullspace of \f{\Op} is chosen to be \f{\bas_1=\sin(\omega' x)} and \f{\bas_2=\cos (\omega' x)}.

\item{\underline{Piecewise-polynomials:}} Let \f{\fn} consist of \f{\np+1} polynomial pieces \f{\fn_n}, where \f{\deg \fn_n = d}. The annihilating operator for every piece is \f{\Op=\partial^{d+1}}, and therefore $\fn$ is annihilated by
\[
 \newop = \bigl\{(x-\xi_1)^{d+1} \cdot \dotsc \cdot (x-\xi_{\np})^{d+1} \bigr\} \partial^{d+1}
\]
Consequently, \f{H=V_{d+1}} is a \f{\np(d+1) \times \np(d+1)} Hankel matrix. Operator reconstruction step gives \f{\recOp=p(x) \partial^{d+1}}, and the jump locations are the roots of $p(x)$, each with multiplicity \f{d+1}. The reconstruction is considered to be unsuccessful if at least one root fails to lie inside the interval \f{[a,b]}.

The basis for the nullspace of \f{\Op} is always chosen to be \f{\{1,x,\dotsc,x^d\}}.
\end{enumerate}

\end{document}